\renewcommand\expandafter\subsection\expandafter{%
		\expandafter\@fb@secFB\subsection
	}%
\numberwithin{equation}{section}
\theoremstyle{plain}
\newtheorem{thm}{Theorem}[section]
\newtheorem{lem}[thm]{Lemma}
\newtheorem{cor}[thm]{Corollary}
\newtheorem{prop}[thm]{Proposition}
\theoremstyle{definition}
\newtheorem{defn}[thm]{Definition}
\newtheorem{rem}[thm]{Remark}
\DeclareMathOperator*{\esssup}{ess\,sup}
\renewcommand{\div}{\operatorname{div}}
\newcommand{\IC}{\mathbb{C}}
\newcommand{\R}{\mathbb{R}}
\newcommand{\IZ}{\mathbb{Z}}
\newcommand{\cA}{\mathcal{A}}
\newcommand{\cT}{\mathcal{T}}
\newcommand{\cD}{\mathcal{D}} 
\newcommand{\cF}{\mathcal{F}} 
\newcommand{\cK}{\mathcal{K}} 
\newcommand{\cL}{\mathcal{L}}
\newcommand{\cS}{\mathcal{S}} 
\newcommand{\loc}{\operatorname{loc}}
\renewcommand{\L}{\operatorname{L}} 
\newcommand{\C}{\operatorname{C}} 
\renewcommand{\H}{\operatorname{H}} 
\newcommand{\W}{\operatorname{W}}
  \newcommand{\I}{\operatorname{I}}
   \newcommand{\U}{\operatorname{U}}
\DeclareRobustCommand{\Hdot}{\dot{\H}\protect{\vphantom{H}}
} 
\DeclareRobustCommand{\Wdot}{\dot{\W}\protect{\vphantom{W}}} 
\renewcommand\Re{\operatorname{Re}}
\def\angle#1#2{\langle #1,#2 \rangle} 
\def\Ga{G}
\def\tGa{\widetilde G}
\def\tpsi{\tilde\psi}
\def\tf{\tilde f}
\def\tv{\tilde\phi}
\def\v{\phi}
\def\e{\mathrm{e}}
\renewcommand{\iint}{\int_{}\kern-.34em \int} 
\renewcommand{\iiint}{\iint_{}\kern-.34em \int} 
\newcommand{\dd}{\, \mathrm{d} }
\newcommand{\dv}{\, \mathrm{d} v}
\newcommand{\dx}{\, \mathrm{d} x}
\newcommand{\ds}{\, \mathrm{d} s}
\newcommand{\dt}{\, \mathrm{d} t}
\newcommand{\pascal}[1]{\textcolor{black}{#1}}
\title[Fundamental solutions to Kolmogorov-Fokker-Planck equations]{Fundamental solutions to Kolmogorov-Fokker-Planck equations with rough coefficients:  existence, uniqueness, upper estimates}
\author{Pascal Auscher}
\address{Universit\'e Paris-Saclay, CNRS, Laboratoire de Math\'{e}matiques d'Orsay, 91405 Orsay, France}
\email{pascal.auscher@universite-paris-saclay.fr}
\author{Cyril Imbert}
\address{Département de Mathématiques et Applications, École normale supérieure, Université PSL, CNRS, 75005 Paris, France}
\email{cyril.imbert@ens.psl.eu}
\author{Lukas Niebel}
\address{Institut f\"ur Analysis und Numerik,  Westf\"alische Wilhelms-Universit\"at M\"unster\\
Orl\'eans-Ring 10, 48149 M\"unster, Germany.}
\email{lukas.niebel@uni-muenster.de}
\date{November 22, 2024}
\thanks{The third author is funded by the Deutsche Forschungsgemeinschaft (DFG, German Research Foundation) under Germany's Excellence Strategy EXC 2044 --390685587, Mathematics M\"unster: Dynamics--Geometry--Structure.}
\subjclass[2010]{Primary: 35K65, 35R05, 35D30, 35Q84, 35R09 Secondary:   35K70, 35B65}
\keywords{Kolmogorov-Fokker-Planck equations,    weak solutions, kinetic Cauchy problems, fundamental solutions, Green operators}
\begin{document}

\begin{abstract} We show the existence and uniqueness of fundamental solution operators to Kolmo\-gorov-Fokker-Planck equations with rough (measurable) coefficients and local or integral diffusion on finite and infinite time strips. In the local case, that is to say when the diffusion operator is of differential type, we prove $\L^2$ decay using Davies' method and the conservation property. We also prove that the existence of a generalized fundamental solution with the expected pointwise Gaussian upper bound is equivalent to Moser's $\L^2-\L^\infty$ estimates for local weak solutions to the equation and its adjoint.  When coefficients are real, this gives the existence and uniqueness of such a generalized fundamental solution and a new and natural way to obtain pointwise decay.

\end{abstract}

\maketitle

\setcounter{tocdepth}{1}
\tableofcontents

\section{Introduction}

In a previous work \cite{AIN}, through a study of embeddings for appropriate weak kinetic spaces, we provided a full theory for existence and uniqueness of weak solutions to the kinetic Cauchy problem for Kolmogorov-Fokker-Planck equations with local or integral diffusion and rough (measurable) coefficients of the variables $(t,x,v) \in I\times \R^d\times \R^d$, $I$ being a finite  or infinite time interval $(0,T), (0,\infty)$ or $\R$. The equation is of the form 
\begin{equation}
  \label{eq:weaksol}
 (\partial_t + v \cdot \nabla_x ) f +\cA f = S, 
\end{equation}
 and there is  a square integrable (in $x,v$) initial data $\psi$ at time $0$  except when  $I=\R$. We allowed $S$ to be in large classes of source spaces.
We review definitions,  assumptions and our results in Section~\ref{sec:review}, see in particular Theorem~\ref{thm:CP0T}.\\

In the first part of this article (Section~\ref{sec:construction}), we show that our results give rise to the existence and uniqueness of an evolution family $\Gamma (t,s)$ of bounded operators on $\L^2_{x,v}$ playing the role  of a fundamental solution at operator level. This is our first main result,  Theorem~\ref{thm:exunfundsol}. It goes through a synthetic and workable definition of what should be fundamental solution operators, to be distinguished from the definition of Green operators (also called propagators in the literature). Green operators are the operators solving the homogeneous initial value problem. Fundamental solution operators are the ones representing weak solutions to inhomogeneous Cauchy problems with zero initial data.  The main observation is that they must coincide and that  weak solutions at all times $t\ge 0$ are given by
\begin{align}
\label{eq:superposition}
 f(t)=\Gamma(t,0)\psi+ \int_{0}^t \Gamma(t,s) S(s)\ds,
 \end{align}
 which is in accordance with the superposition principle in physics. We mention at this stage that results of this nature are the first ones for kinetic equations with integral (non-local) rough diffusion. 
 
This approach is reminiscent of the work presented in Lions' book \cite{MR153974} for parabolic equations:  see the remark on page 108 saying that this formula holds in all the examples treated there.   An explicit construction and proof of representation working in the local case at the highest level of generality is given by  M.~Egert and the first author in \cite{ae}. Using the language of distributions, the Schwartz kernels $\Gamma(t,x,v, s,y,w)$ of $\Gamma(t,s)$ can be thought of as the fundamental solution but, of course, it is not necessarily a function, unless more conditions are assumed. 
 
In the kinetic context, we use ideas from \cite{ae} in order to prove  that the representation \eqref{eq:superposition}  is true for all weak solutions when working at this level of generality. In particular, we explain the meaning of the integral, starting from the notion of Green operators. See Theorem~\ref{thm:representation0T} below and its variants in Section~\ref{sec:infiniteintervals}. We stress that we do not use approximations by operators with regular coefficients to do that.\\

In the second part of this article (Section~\ref{sec:localcase}), we turn to the case of local diffusions discussing the properties of our fundamental solution operators.  To simplify matters and explanations of ideas, we restrict to Fokker-Planck equations with pure second order operators;  the methods allow us to have lower-order terms with bounded coefficients but we shall not provide details. In this case, the elliptic part is given by  
 \begin{equation}
  \label{e:fk}
  \cA f(t,x,v)  = -\nabla_v \cdot ( \mathbf{A} \nabla_v f)(t,x,v) 
\end{equation}
where $\mathbf{A}=\mathbf{A}(t,x,v)$ is a bounded and measurable coercive matrix.  We import Davies's exponential argument \cite{MR1346221} to kinetic equations and show in our second main result, Theorem~\ref{thm:fundamental-bounds},  that this fundamental solution operator always enjoys $\L^2$ decay, measured in the distance induced by the underlying Galilean group law as it should be. Furthermore, we prove the conservation property (Theorem~\ref{thm:conservationproperty}) also at this level of generality.  
   
In these arguments, keeping in mind that kinetic models are naturally real-valued, $\cA$ could even be a complex-valued operator (and even a system). Of course, the motivation coming from nonlinear kinetic equations (such as the Landau equation) only involves real coefficients and real solutions. We note that handling complex coefficients opens the door to analytic perturbation results looking at the dependency of the solutions or the fundamental solution with respect to the coefficients of $\cA$ in $\L^\infty$ topology. 
   
 The next natural question is whether the fundamental solution operator has an integral representation with a kernel having pointwise kinetic Gaussian upper estimates as when $\cA=-\Delta_{v}$. There are known methods for real coefficients,  assuming \textit{a priori} the existence of the fundamental solution and not using the regularity of the coefficients quantitatively, see below for the literature review.  Nevertheless, our third main result, Theorem~\ref{thm:MoserimpliesGUB}, shows that the integral representation holds and that upper estimates for its kernel can be derived using, as a black box, scale-invariant $\L^2-\L^\infty$ Moser estimates for local weak solutions in the sense defined in Section~\ref{sec:weaksol}  of the equation and of the adjoint equation. This is obtained by adapting an argument of S. Hofmann and S. Kim  \cite{MR2091016} done in a parabolic context. 
 
 In fact, Moser's estimates are also necessary. These are, therefore, equivalent statements. In particular, combining this with the work of F.~Golse, C.~Mouhot, A.~F. Vasseur and the second author in \cite{MR3923847}, the conclusion is the existence  of a generalized  fundamental solution with the expected properties  when $\cA$ has real measurable coefficients 
 (Theorem~\ref{thm:conclusion}). We also show uniqueness, which tells us that our construction subsumes all other ones done under supplementary hypotheses, making therefore all various results and methods of the literature on fundamental solutions  fully available.\\

We finish this introduction by reviewing the main contributions on these topics. They are only concerned with real coefficients. In the local case, existing results on Moser $\L^2-\L^\infty$ estimates
can explicitly be found for the first time in a work of  A.~Pascucci and S.~Polidoro \cite{MR2068847} for a notion of weak solutions with $(\partial_t + v \cdot \nabla_x ) f \in \L^2_{\loc}$ in all variables, which has no reason to hold in the case of rough coefficients. Assuming symmetry of the coefficients (but this is not necessary), this restriction was lifted by F.~Golse, C.~Mouhot, A.~F. Vasseur and the second author in \cite[Lemma 11]{MR3923847} and  Moser's estimate was proved for weak solutions locally in $\L^\infty_{t}\L^2_{x,v} \cap \L^2_{t,x}\H^1_{v}$  using energy inequalities for sub-solutions. A Harnack inequality was also proved there (\cite[Theorem 3]{MR3923847}). See also the works of J.~Guerand, C.~Mouhot and the second author \cite{MR4653756,MR4453413} for further developments.

As far as Gaussian upper and lower pointwise estimates of the fundamental solution are concerned, they were first obtained in the local case assuming its \textit{a priori} existence and working with the class of weak solutions of \cite{MR2068847}. We mention three works along this direction.  A.~Pascucci and S.~Polidoro \cite{MR2000352}  proved upper estimates for a larger class of ultra-parabolic equations with H\"older continuous coefficients.  A.~Lanconelli and A.~Pascucci obtained upper estimates in \cite{MR3717350} by the Nash method for real measurable coefficients. Under the same assumptions as in \cite{MR3717350}, lower bounds are obtained by  A.~Lanconelli, A.~Pascucci and S. Polidoro in \cite{MR4181953}.

Recently, for larger classes of weak solutions \pascal{than in \cite{MR2068847}, but still smaller than ours},  F.~Anceschi and A.~Rebucci \cite{francesca2021fundamental} prove the existence of a fundamental solution in the sense of \cite{MR4181953} for real and symmetric measurable coefficients under a further technical integrability condition: their construction uses regularisation of the coefficients and bounds for fundamental solutions when coefficients are H\"older continuous;  they also derive Gaussian upper and lower bounds under a general existence assumption.

In the parabolic case with non-local real diffusions, the article of M.~Kassmann and M.~Weidner \cite{MR4530315} and the references therein yield the most recent update on works providing upper and lower pointwise estimates, using Davies' or Aronson's methods. This should provide insight for further developments in the kinetic context.

\section{Weak solutions to Kolmogorov-Fokker-Planck equations}
\label{sec:review}

For the comfort of the reader and also to prepare the grounds for the construction of fundamental solutions, we summarize the theory developed in \cite{AIN}. In particular, we recall the functional spaces we will work with, the assumption on the diffusion operator $\mathcal{A}$, the definition of weak solutions and the results concerning their existence in various settings.

\pascal{For $\beta\in \R$, we denote the homogeneous and inhomogeneous Sobolev norms on $\R^d$ in the $v$-variable by
\[
\|f\|_{\Hdot^\beta_{v}}=\|D_{v}^\beta f\|_{\L^2_{v}}, \quad  \|f\|_{\H^\beta_{v}}^2 = \|(I-\Delta_{v})^{\beta/2} f\|_{\L^2_{v}}^2, \]
where $D_{v}=(-\Delta_{v})^{1/2}$, \pascal{which amounts to multiplication by $|\xi|$ in Fourier variable.} For 
$\beta\in \R$,  the Sobolev space $\H^\beta_{v}$ is the space of tempered distributions with $\|f\|_{\H^\beta_{v}}<\infty$. For the homogeneous norm, the expression $D_{v}^\beta f$ is not defined for all tempered distributions and, as we do not want to work modulo polynomials, we have to proceed as follows. The image of $\L^2_{v}$ under  $D_{v}^{-\beta}$  is  a set of tempered distributions  if and only if $\beta<d/2$. For $\beta <d/2$, we define the homogeneous Sobolev space $\Hdot^\beta_{v}$ as the space of tempered distributions $f$ with  $f=D_{v}^{-\beta}g$ for some $g\in \L^2_{v}$, and we write $g=  D_{v}^{\beta} f$ and $\|f\|_{\Hdot^\beta_{v}}=\|g\|_{\L^2_v}$. It is a Hilbert space.  For $\beta\ge d/2$, given a tempered distribution $f$ with further assumptions, \textit{e.g.} $f\in \L^2_{v}$, $D_{v}^\beta f$ exists as a tempered distribution and one can consider its $\L^2_{v}$ norm.  Even if this does not lead to a proper definition of a Hilbert space, we shall use this remark later on.}  

For a given interval $I$ of $\R$, we assume that  $I\times \R^d\ni (t,x)\mapsto a_{t,x}$  is a  family of continuous sesquilinear  forms 
$$a_{t,x}\colon \H^{\beta}_{v} \times \H^\beta_{\vphantom{v} v} \to \IC$$
such that there exists $\Lambda <\infty, 0\le c^0<\infty$ such that for all  
$f,g \in \H^{\beta}_{v}$, the function
   $(t,x)\mapsto a_{t,x}(f,g)$ is measurable and satisfies 
\begin{equation}
  \label{e:ellip-upper}
  |a_{t,x}(f,g)|\le \Lambda (\|f\|_{\Hdot^\beta_{v}}+c^0\|f\|_{\L^2_{v}})( \|g\|_{\Hdot^\beta_{v}}+c^0\|g\|_{\L^2_{v}})
\end{equation}
 uniformly in $(t,x)\in I\times \R^d$.  We set
\begin{equation}
  \label{e:a-defi}
  \forall f,g \in \L^2_{t,x}\H^{\beta}_{\vphantom{t,x} v}, \qquad a(f,g)=\iint_{I\times \R^d} a_{t,x}(f,g)\, \dt\dx.
\end{equation}
For short, we have written   $a_{t,x}(f(t,x),g(t,x))$ as $a_{t,x}(f,g)$ in \eqref{e:a-defi}. From \eqref{e:ellip-upper}, we deduce that $a$ is a continuous  sesquilinear form on $\L^2_{t,x}\H^{\beta}_{\vphantom{t,x} v}$.
As $\L^2_{t,x}\H^{-\beta}_{\vphantom{t,x} v}$ identifies with the dual of $\L^2_{t,x}\H^{\beta}_{\vphantom{t,x} v}$ in the duality extending the  $\L^2_{t,x,v}$ inner product, we can set  $\cA: \L^2_{t,x}\H^{\beta}_{\vphantom{t,x} v} \to \L^2_{t,x}\H^{-\beta}_{\vphantom{t,x} v}$ the bounded operator associated to $a$ by 
$$ \angle{\cA f}{g}=  a(f,g), \quad f,g\in \L^2_{t,x}\H^{\beta}_{\vphantom{t,x} v}.$$
\pascal{If we assume coercivity in the sense that    there exists $\lambda>0, c_{0}\ge 0$ such that for all $(t,x)\in I\times \R^d$ and  all $f\in \H^\beta_{v}$,
\begin{align}
\label{eq:weakelliptic-bis}
 \Re a_{t,x}(f,f) \ge \lambda \|f\|_{\Hdot^\beta_{v}}^2 -c_{0}\|f\|^2_{\L^2_{v}} \ ,
 \end{align}
then we get from the Lax-Milgram lemma that   $\cA+c$ is an isomorphism for  $c>c_{0}$. }

\pascal{If $I$ is infinite,  $c^{0}=0$ in \eqref{e:ellip-upper} and  $c_{0}=0$  in  \eqref{eq:weakelliptic-bis}, then one can work with  $\cA$:   it  is extended to a bounded and invertible operator from $\L^2_{t,x}\Hdot^{\beta}_{\vphantom{t,x} v}$ to $\L^2_{t,x}\Hdot^{-\beta}_{\vphantom{t,x} v}$ if $\beta<d/2$. If $\beta\ge d/2$,  $\cA$ is extended to $\L^2_{t,x}\Hdot^{\beta}_{v} \cap \L^2_{\loc, t}\L^2_{x,v}$ into $\L^2_{t,x}\Hdot^{-\beta}_{\vphantom{t,x} v}$
still using the homogeneous norms in the upper and  lower bounds.} 

%
%

In the notation $ \L^2_{t,x}\H^{\beta}_{\vphantom{t,x} v}$, we do not indicate the time interval. We do it for the other mixed spaces of the same type. It will be clear from the context.

 For the next definition, we only need to have $\cA$  defined and bounded as above.
 
 \begin{defn}[Weak solutions to the kinetic Cauchy problem]
\label{defn:weaksol-cauchy} Let $0<T<\infty$ \pascal{and $\beta>0$.}
 Let $S \in \cD' ((0,T)\times \R^d\times \R^d)$  and $\psi \in \L^2_{x,v}$. Assume that the  distribution $S$ extends to a continuous linear functional on $\cD ([0,T)\times \R^{d}\times \R^d)$. A  distribution $f \in \cD' ((0,T)\times \R^{d}\times\R^d)$ is said to be a \emph{weak solution} \pascal{to the Cauchy problem on $[0,T]$}  \begin{align}
 \label{eq:CP}
 \begin{cases}
 (\partial_{t}+v\cdot\nabla_{x})f + \cA f= S,  & \\
  f(0)=\psi,  &
\end{cases}
\end{align}
 if $f\in  \L^2_{t,x}\H^{\beta}_{\vphantom{t,x} v}$ and for all $h\in \cD ([0,T)\times \R^{d}\times\R^d)$, 
\[
  -\iiint  f \, (\partial_{t}+v\cdot\nabla_{x}) \overline h \, \dt \dx \dv + a(f,h) = \angle{S}{h} + \iint_{\R^{2d}} \psi (x,v) \overline h (0,x,v) \dx \dv.
\]
\end{defn}  

The sources $S$ in our result have a clear extension, so we do not insist on the definition of this extension in full generality \pascal{and refer to the assumptions of our next result. For the definition when $T=\infty$, one may not want to  require $f\in  \L^2_{t,x}\H^{\beta}_{\vphantom{t,x} v} $ if $c^{0}=0$ in \eqref{e:ellip-upper} ({as there would not be such a solution})  and only ask for   $f\in  \L^2_{t,x}\Hdot^{\beta}_{\vphantom{t,x} v}$ when $\beta<d/2$ and $f\in \L^2_{t,x}\Hdot^{\beta}_{v} \cap \L^2_{\loc, t}\L^2_{x,v}$ 
   if $\beta\ge d/2$ (see our discussion  at the beginning of Section \ref{sec:review}) {so that} the variational formulation makes sense.} If we work with $I=\R$, then there is no initial value $\psi$ and only the equation
\begin{equation}
\label{eq:weaksol}
 (\partial_t + v \cdot \nabla_x ) f +\cA f = S \ \pascal{\mathrm{in} \  \cD' (\R\times \R^{d}\times\R^d).}
\end{equation} Working on $\R$ can also be thought of as a kinetic Cauchy problem on $[-\infty,\infty]$ with initial condition 0 at $-\infty$.

The following theorem summarizes the results in \cite{AIN}.

\begin{thm}[Construction of weak solutions to the kinetic Cauchy problem]
  \label{thm:CP0T}
Let $0<T<\infty$ \pascal{and $\beta>0$}.   Assume that  the sesquilinear form $a$ satisfies  \eqref{e:ellip-upper} and \eqref{eq:weakelliptic-bis}  on $(0,T)\times \R^d\times \R^d$.
 Let $S\in  \L^2_{t,x}\H^{-\beta}_{v} + \L^2_{t,v}\H_{\vphantom{t,v} x}^{-\frac{\beta}{2\beta+1}}
+ \L^1_{t}\L^2_{x,v}$ and $\psi\in \L^2_{x,v}$. There exists a unique weak solution  $f\in \L^2_{t,x}\H^{\beta}_{v}$ to 
 \begin{align}
 \label{eq:CP0T}
 \begin{cases}
 (\partial_{t}+v\cdot\nabla_{x})f + \cA f= S,  &
 \\
  f(0)=\psi &
\end{cases}
\end{align}
in the sense of Definition \ref{defn:weaksol-cauchy}.
Moreover, $f\in  \L^2_{t,v}\H_{\vphantom{t,v} x}^{\frac{\beta}{2\beta+1}}\cap \C([0,T]\, ; \L^2_{x,v} )$,  $f(t)$ converges to $\psi$ in  $ \L^2_{x,v}$ as $t\to 0$, and 
 \begin{align}
\nonumber
\sup_{t\in [0,T]} \|f(t)\|_{\L^2_{x,v}} +  \|  D_{v}^{\beta} f\|_{\L^2_{t,x,v}}&+  \|   D_{\vphantom{D_{v}} x}^{\frac{\beta}{2\beta+1}} f\|_{\L^2_{t,x,v}} +  \|   f\|_{\L^2_{t,x,v}}\\
&
 \label{eq:estimateweaksolution}\lesssim \|S \|_{\L^2_{t,x}\H^{-\beta}_{\vphantom{t,x} v} + \L^2_{t,v}\H_{\vphantom{t,x} x}^{-\frac{\beta}{2\beta+1}}+ \L^1_{t}\L^2_{x,v}} +\|\psi\|_{\vphantom{\L^2_{t,x}\H^{-\beta}_{\vphantom{t,x} v} + \L^2_{t,v}\H_{\vphantom{t,x} x}^{-\frac{\beta}{2\beta+1}}+ \L^1_{t}\L^2_{x,v}} \L^2_{x,v}}
\end{align}
for an implicit constant  depending on $d,\beta, \lambda, \Lambda, c_{0}, c^0, T$. Furthermore, $f$ satisfies the energy equality for all times $0 \le s < t \le T$
\begin{align} \label{e:energy}
\nonumber \|f(t)\|^2_{\L^2_{x,v}}  + 2\Re& \int_{s}^t\int_{\R^d} a_{\tau,x}({f},{f})\dx\dd \tau = 
\\ \|f(s)\|^2_{\L^2_{x,v}}   &+ 2\Re \int_{s}^t\bigg(\int_{\R^d} \angle {S_{1}}{f} \dx + \int_{\R^d} \angle {S_{2}}{f} \dv + \iint_{\R^{2d}}  {S_{3}}\overline f \dx \dv \bigg)\dd \tau
  \end{align} 
  for any decomposition $S=S_{1}+S_{2}+S_{3}$ with $S_{1}\in \L^2_{t,x}\H^{-\beta}_{v}$, $S_{2}\in \L^2_{t,v}\H_{\vphantom{t,v} x}^{-\frac{\beta}{2\beta+1}}$ and $S_{3}\in \L^1_{t}\L^2_{x,v}$.
  
%
\end{thm}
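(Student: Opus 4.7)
The plan is to invoke the kinetic function-space framework developed in \cite{AIN} and apply a non-symmetric Lions-type variational argument. The key is to introduce the kinetic solution space $\cX_\beta$ consisting of $f \in \L^2_{t,x}\H^\beta_v$ whose kinetic derivative $(\partial_t + v\cdot\nabla_x)f$ lies in the natural dual (accommodating the three source classes appearing in the statement), and to establish two embeddings proven in \cite{AIN}: (i) the hypoelliptic embedding $\cX_\beta \hookrightarrow \L^2_{t,v}\H^{\beta/(2\beta+1)}_x$, encoding the gain of regularity in $x$ produced by the interaction between $v$-diffusion and free transport, and (ii) the trace embedding $\cX_\beta \hookrightarrow \C([0,T]; \L^2_{x,v})$, making the initial value $f(0)$ well defined. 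These two embeddings are the linchpins of the whole argument and will be applied both to the solution and to the test functions.

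For existence, I would apply the Lions projection theorem: the trial space is $\cX_\beta$, the test space is $\cD([0,T) \times \R^{d} \times \R^d)$ endowed with a kinetic norm that also incorporates $\L^2_{x,v}$ at $t=0$ to accommodate $\psi$, and the bilinear form is
\[
B(f,h) = a(f,h) - \iiint f \, (\partial_t + v\cdot\nabla_x)\overline{h} \, \dt \dx \dv .
\]
After an exponential-weight change $f \mapsto e^{-\mu t} f$ to absorb $c_0$ and $c^0$, the coercivity assumption \eqref{eq:weakelliptic-bis} combined with the embeddings (i)--(ii) applied to $h$ yields the inf-sup estimate needed to solve $B(f,h) = \angle{S}{h} + \iint_{\R^{2d}} \psi\, \overline{h}(0,\cdot)\, \dx \dv$ for every test $h$. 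Pairing each of the three pieces of $S$ against the test function through the norms supplied by the embeddings then produces the quantitative bound \eqref{eq:estimateweaksolution}.

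The regularity $f \in \L^2_{t,v}\H^{\beta/(2\beta+1)}_x \cap \C([0,T]; \L^2_{x,v})$ follows directly from the embeddings (i)--(ii) once $f \in \cX_\beta$ is established, and the identification $f(t) \to \psi$ in $\L^2_{x,v}$ as $t \to 0$ comes from the variational formulation by selecting test functions that do not vanish at $t=0$. The energy equality \eqref{e:energy} is obtained formally by pairing the equation with $f$, using the skew-adjointness of $v\cdot\nabla_x$ on $\L^2_{x,v}$ to write $2\Re \anglep{(\partial_t + v\cdot\nabla_x) f}{f} = \tfrac{d}{dt}\|f\|_{\L^2_{x,v}}^2$ in $\cD'(0,T)$; rigorous justification comes from a time-regularization $f_\eps = f *_t \rho_\eps$, verifying the identity pointwise in time for $f_\eps$ and passing to the limit using the continuity in (ii) and the $\L^2_t$ bounds on each term. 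Uniqueness then follows by applying the energy equality to the difference of two candidate solutions (with $S = 0$ and $\psi = 0$), yielding a Gronwall inequality for $\|f(t)\|_{\L^2_{x,v}}^2$ that forces $f \equiv 0$.

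The main obstacle is the Lions-type coercivity estimate on $\cX_\beta$: the equation mixes first-order transport with $v$-diffusion, so there is no direct coercivity in the ambient norm, and one must invoke the hypoelliptic embedding (i) applied to the test functions in order to recover enough $x$-regularity for $\cD([0,T)\times\R^{d}\times\R^d)$ to form a norming family on elements of $\cX_\beta$. This is precisely where the kinetic, rather than purely parabolic, character of the problem enters, and the resulting embeddings constitute the main technical content of \cite{AIN}.
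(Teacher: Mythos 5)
The paper does not prove this theorem here: it is explicitly a summary of results imported from \cite{AIN}, and the only ``proof'' content offered is the remark that the statement on $[0,T]$ is obtained by restriction (via the canonical extension of $\cA$) from the analogous statement on $[0,\infty)$, which in turn comes from the whole-line result by incorporating the initial condition and the causality principle. Your outline is consistent with the strategy of \cite{AIN} as the present paper describes it -- a variational (Lions projection) argument in a kinetic trial space whose two linchpins are the hypoelliptic embedding into $\L^2_{t,v}\H^{\beta/(2\beta+1)}_{x}$ and the trace/continuity embedding into $\C([0,T];\L^2_{x,v})$, with the energy equality justified by time-regularization and uniqueness by Gronwall -- so at the level of ideas it is a faithful reconstruction. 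The one structural difference worth flagging is that you set up the variational problem directly on $[0,T]$ with the initial datum built into the test space, whereas the cited route first inverts $(\partial_t+v\cdot\nabla_x)+\cA+c$ on all of $\R$, then treats $[0,\infty)$ and finally restricts to $[0,T]$; the latter ordering is what makes the causality principle and the operator $\cK_{\cA+c}$ (used later in Remarks~\ref{rem:1}--\ref{rem:4} and in the construction of the fundamental solution) come out for free. Two points in your sketch deserve more care if carried out in full: Lions' theorem requires coercivity of $B(h,h)$ on the test space (not an inf-sup condition on the trial space), and the continuity of $h\mapsto\angle{S}{h}$ for the pieces $S_2\in\L^2_{t,v}\H^{-\beta/(2\beta+1)}_{x}$ and $S_3\in\L^1_t\L^2_{x,v}$ forces the test-space norm to already contain the conclusions of the two embeddings, which is exactly where the main technical work of \cite{AIN} sits.
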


We recall that the theorem on $[0,T]$ proceeded from a similar one on  $[0,\infty)$  by restriction (by taking what we called the canonical extension of $\cA$, see the proof of Theorem 5.7 in \cite{AIN}) and that the study on $[0,\infty)$  proceeded by a modification of the argument on $\R$  to incorporate the initial condition and use of the causality principle. 
We describe their specific statements in several remarks.     

\begin{rem}[Case of $\R$ with inhomogeneous operators and spaces]
\label{rem:1}
  The equation  $(\partial_{t}+v\cdot\nabla_{x})f+ \cA f +cf =S$ with $c>c_{0}$ in \eqref{eq:weakelliptic-bis} \pascal{has a unique weak solution in $\L^2_{t,x}\H^{\beta}_{v}$. This solution belongs to 
$\L^2_{t,x}\H^{\beta}_{v}\cap \L^2_{t,v}\H_{\vphantom{t,v} x}^{\frac{\beta}{2\beta+1}}\cap \C_{0}(\R\, ; \L^2_{x,v} )$ (continuity with $\lim_{t\to \pm\infty}\|f(t)\|_{\L^2_{x,v}} =0$),  the same estimates as in \eqref{eq:estimateweaksolution} on $\R$ and {the} energy equality \eqref{e:energy}. In other words, this proves the invertibility of  $(\partial_{t}+v\cdot\nabla_{x})+ \cA  +c$ and that its inverse, that we denote by $\cK_{\cA+c}\, $, has the continuity properties given by the estimates on $f$ from the source $S$.}  
\end{rem}

\begin{rem}[Case of $[0,\infty)$ with inhomogeneous operators and spaces]
\label{rem:2}
The kinetic Cauchy problem on $[0,\infty)$ is posed for the operators $(\partial_{t}+v\cdot\nabla_{x}) + \cA +c$ with $c>c_{0}$ in \eqref{eq:weakelliptic-bis} and data $(\psi,S)$. \pascal{It has a unique weak solution $f$ in $\L^2_{t,x}\H^{\beta}_{v}$, {which} belongs to 
$\L^2_{t,x}\H^{\beta}_{v}\cap \L^2_{t,v}\H_{\vphantom{t,v} x}^{\frac{\beta}{2\beta+1}}\cap \C_{0}([0,\infty)\, ; \L^2_{x,v} )$ (continuity with $\lim_{t\to \infty}\|f(t)\|_{\L^2_{x,v}} =0$), {satisfies} the same estimates as in \eqref{eq:estimateweaksolution} on $(0,\infty)$ and {the} energy equality \eqref{e:energy} on $[0,\infty)$.}  In addition, when $\psi=0$, \pascal{by the causality principle} $f=\cK_{\cA+c\,} S_{0}$ where $S_{0}$ is the zero extension of $S$ and $\cK_{\cA+c}$ is defined in Remark~\ref{rem:1}.
\end{rem}

\begin{rem}[Case of $\R$ with homogeneous operators and spaces]
\label{rem:3}
  When  $c^{0}=c_{0}=0$ in  \eqref{e:ellip-upper} and \eqref{eq:weakelliptic-bis} on $\R\times \R^d$, one works with homogeneous \pascal{norms},
  and proves an invertibility of the homogeneous operator $(\partial_{t}+v\cdot\nabla_{x})+ \cA$. \pascal{Namely,} 
  given $S$ in the corresponding homogeneous spaces \pascal{(with negative indices)}, there exists a weak solution satisfying the homogeneous estimates corresponding to \eqref{eq:estimateweaksolution} on $\R$ (eliminate the $\L^2_{t,x,v}$ estimate on $f$ and use homogeneous \pascal{Sobolev norms} for the source terms), the energy equality on $\R$  and $\lim_{t\to \pm\infty}\|f(t)\|_{\L^2_{x,v}} =0$.  \pascal{This holds for  all $\beta>0$ since the solution belongs to $\L^2_{x,v}$ for all times.  Due to the definition of the homogeneous Sobolev space $\Hdot^{\beta}_{v}$}, this solution is unique in  $\L^2_{t,x}\Hdot^{\beta}_{v}$ if $\beta<d/2$    and in $\L^2_{t,x}\Hdot^{\beta}_{v} \cap \L^2_{\loc,t}\L^{2}_{x,v}$ if $\beta\ge d/2$.  In \pascal{passing}, we proved invertibility of $(\partial_{t}+v\cdot\nabla_{x})+ \cA $ \pascal{(all $\beta>0$)}. Its inverse, that we denote $\cK_{\cA}\,$, has the continuity properties given by the estimates on $f$ from those of $S$. 
\end{rem}

\begin{rem}[Case of $[0,\infty)$ with homogeneous operators and spaces]
\label{rem:4}
 When  $c^{0}=c_{0}=0$ in  \eqref{e:ellip-upper} and \eqref{eq:weakelliptic-bis} on $(0,\infty)\times \R^d$, one  works with homogeneous Sobolev
 \pascal{norms} for the Cauchy problem on $[0,\infty)$  for the homogeneous operator $(\partial_{t}+v\cdot\nabla_{x})+ \cA$.
Given $S$ in the corresponding homogeneous spaces \pascal{and initial data $\psi\in \L^2_{x,v}$)}, there exists a weak solution satisfying the homogeneous estimates corresponding to \eqref{eq:estimateweaksolution} on $(0,\infty)$, the energy equality on $[0,\infty)$  and $\lim_{t\to \infty}\|f(t)\|_{\L^2_{x,v}} =0$. This solution is unique in  $\L^2_{t,x}\Hdot^{\beta}_{v}$ if $\beta<d/2$   and in $\L^2_{t,x}\Hdot^{\beta}_{v} \cap \L^2_{\loc,t}\L^{2}_{x,v}$ if $\beta\ge d/2$.   Moreover, when $\psi=0$, the causality principle says that $f=\cK_{\cA}\,S_{0}$ where $S_{0}$ is the zero extension of $S$ and $\cK_{A}$ is as in Remark~\ref{rem:3}. 
\end{rem}

\begin{rem}[Homogeneous local case]
\label{rem:5}
When $\beta=1$ and  $d\ge 1$, one can work in  Remarks~\ref{rem:3} and \ref{rem:4} 
with $\Wdot^{1,2}_{v}=\{f\in \cD'(\R^d)\, ;\, \nabla f\in \L^2(\R^d)\}$   (replacing $\Hdot^1_{v}$), 
 when $c^{0}=c_{0}=0$ in  \eqref{e:ellip-upper} and \eqref{eq:weakelliptic-bis}.  The weak solution in  Remark~\ref{rem:3} is unique  in  $\L^2_{t,x}\Wdot^{1,2}_{v}$ \pascal{modulo a constant (which can be eliminated if we require $f(t) \in \L^2_{x,v}$ for some $t$)} and  the one in Remark~\ref{rem:4} is unique in  $\L^2_{t,x}\Wdot^{1,2}_{v}$.
\end{rem}

\begin{rem}[The backward Cauchy problem]
 Consider the adjoint problem in $[0,T] \times \R^d \times \R^d$,
\begin{align}
 \label{eq:CP0Tback}
 \begin{cases}
 -(\partial_{t}+v\cdot\nabla_{x})\tf + \cA^* \tf= \widetilde S,  &
 \\
  \tf(T)=\tpsi &
\end{cases}
\end{align}
where $\cA^*$ is the adjoint of $\cA$.
We refer to it as the backward kinetic Cauchy problem on $[0,T]$ and its weak solutions are defined similarly. The analogous existence and uniqueness theorems hold on $[0,T]$, half-infinite intervals $(-\infty,T]$ and real line $\R$. See Remarks~5.4 and 5.8  in \cite{AIN}. 
\end{rem}
 
  \section{Construction of the fundamental solution} 
  \label{sec:construction}

In this section, we put ourselves in the conditions of Theorem~\ref{thm:CP0T} on a finite interval $[0,T]$. The situation on infinite intervals will be described in Section~\ref{sec:infiniteintervals}.

 Before we come to definitions, statements and proofs, we notice that one can of course shift the initial time and work on $[s,T]$ with $0\le s<T$. The same remark applies to the backward kinetic Cauchy problems for $-(\partial_{t}+v\cdot \nabla_{x})+\cA^*$ on intervals $[0,t]$ with $0<t\le T$.  This leads us to the following definition.  

 \subsection{Green operators}
 
 This terminology comes from Lions' book  \cite{MR153974}. 
 \begin{defn}[Green operators] 
 \label{defn:Greenoperators} Assume 
 $s,t \in [0,T]$ with $s\le t$, and $\psi,\tpsi \in \L^2_{x,v}$.
 \begin{enumerate}
 \item Define $\Ga(t,s)\psi$ as the value at time $t$  in $\L^2_{x,v}$ of the  weak solution $f$  to the kinetic Cauchy problem  
 $(\partial_{t}+v\cdot\nabla_{x})f + \cA f =0 $ with initial value $f(s)=\psi$. 
 \item Define $\tGa(s,t)\tpsi$ as the value at time $s$  in $\L^2_{x,v}$ of the   weak solution $\tilde f$ to the backward kinetic Cauchy problem
 $-(\partial_{t}+v\cdot\nabla_{x})\tf +\cA^*\tf= 0$ with final value $\tf(t)=\tpsi$. 
 \end{enumerate}
 The  families  $G=(\Ga(t,s))_{0\le s\le t\le T}$ and $\tGa=(\tGa(s,t))_{T\ge t\ge s\ge 0}$ are called the \emph{Green operator families} for the kinetic operator $(\partial_{t}+v\cdot \nabla_{x})+\cA $ and the (adjoint) backward kinetic operator $-(\partial_{s}+v\cdot \nabla_{x})+\cA^*$, respectively. We set $\Ga(t,s)=0$ and $\tGa(s,t)=0$ when $s>t$.  
 \end{defn}
 \begin{prop}[Properties of Green operators]
 \label{prop:propGreenop}
 The Green operators $G(t,s)$ and $\tilde G(s,t)$ satisfy the following properties. 
   \begin{enumerate}
  \item $\Ga(t,s)$ and $\tGa(s,t)$ are adjoint {uniformly bounded operators} on $\L^2_{x,v}$ with operator norm bounds $ e^{c_{0}(t-s)}$.
 \item The Chapman-Kolmogorov relations 
 $\Ga(t,s)=\Ga(t,r)\Ga(r,s)$ and $\tGa(s,t)= \tGa(s,r)\tGa(r,t)$ hold for  $s<r<t$. 
 \item $[s,T]\ni t\mapsto \Ga(t,s)$   and $[0,t] \ni s \mapsto \tGa(s,t)$ are strongly continuous on $\L^2_{x,v}$. 
 \end{enumerate}
 \end{prop}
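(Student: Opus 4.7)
The plan is to dispatch the three properties in order, leveraging the energy equality \eqref{e:energy}, the uniqueness in Theorem~\ref{thm:CP0T}, and the regularity conclusion $f\in \C([0,T];\L^2_{x,v})$ from the same theorem.

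For (i), I would first derive the operator norm bound: apply \eqref{e:energy} (on the shifted interval $[s,t]$) to $f=\Ga(\cdot,s)\psi$ with source $S=0$, use the coercivity \eqref{eq:weakelliptic-bis} to lower bound the quadratic term, and absorb $-c_0\|f\|^2_{\L^2_v}$ into the right-hand side to obtain
\[
\|f(t)\|^2_{\L^2_{x,v}} + 2\lambda \int_{s}^{t} \|f(\tau)\|^2_{\Hdot^\beta_v}\,\dd\tau \le \|\psi\|^2_{\L^2_{x,v}} + 2c_0 \int_s^t \|f(\tau)\|^2_{\L^2_{x,v}}\,\dd\tau,
\]
after which Gronwall gives $\|\Ga(t,s)\psi\|_{\L^2_{x,v}}\le e^{c_0(t-s)}\|\psi\|_{\L^2_{x,v}}$; the same argument applies to $\tGa(s,t)$. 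The adjoint relation $\angle{\Ga(t,s)\psi}{\tpsi}=\angle{\psi}{\tGa(s,t)\tpsi}$ I would obtain from a bilinear (forward--backward) energy identity. Setting $f=\Ga(\cdot,s)\psi$ and $\tf=\tGa(\cdot,t)\tpsi$, both in $\L^2_{\tau,x}\H^\beta_v\cap \C([s,t];\L^2_{x,v})$, polarization of \eqref{e:energy} (or, equivalently, pairing the weak formulation of $f$ against $\tf$ and vice versa) yields
\[
\frac{\dd}{\dd\tau}\angle{f(\tau)}{\tf(\tau)}_{\L^2_{x,v}} = -a_\tau(f,\tf)+\overline{a_\tau^*(\tf,f)}=0,
\]
since the transport part $v\cdot\nabla_x$ is skew-adjoint on $\L^2_{x,v}$ and the $\cA$-terms cancel by the definition of $\cA^*$. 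Evaluating at the endpoints of $[s,t]$ gives the adjoint identity.

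For (ii), Chapman--Kolmogorov follows from uniqueness. Fix $s<r<t$ in $[0,T]$ and let $g$ solve $(\partial_\tau+v\cdot\nabla_x)g+\cA g=0$ on $[r,t]$ with $g(r)=\Ga(r,s)\psi$; by definition $g(t)=\Ga(t,r)\Ga(r,s)\psi$. Gluing $\Ga(\cdot,s)\psi|_{[s,r]}$ and $g|_{[r,t]}$ produces a weak solution of the Cauchy problem on $[s,t]$ with initial datum $\psi$ (the two pieces agree at $\tau=r$ in $\L^2_{x,v}$ thanks to the $\C([0,T];\L^2_{x,v})$ regularity, and the weak formulation is additive over the intervals). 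Uniqueness in Theorem~\ref{thm:CP0T} identifies it with $\Ga(\cdot,s)\psi$, and comparing at time $t$ yields the relation; the backward version is symmetric.

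For (iii), strong continuity is a direct consequence of the conclusion $f\in \C([0,T];\L^2_{x,v})$ in Theorem~\ref{thm:CP0T}, applied to the forward solution with initial datum $\psi$ at time $s$, respectively to the backward solution with final datum $\tpsi$ at time $t$; linearity in the data and the norm bound from (i) do the rest.

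The main obstacle is the rigorous justification of the bilinear energy identity underlying (i), because $f$ and $\tf$ are not themselves admissible test functions in Definition~\ref{defn:weaksol-cauchy}. I expect to handle this by a cutoff--mollification argument in the spirit of \cite{AIN}: truncate and mollify $\tf$ in time so as to produce admissible test functions for the weak formulation of $f$, symmetrically for $\tf$, then pass to the limit using the uniform $\L^2_{\tau,x}\H^\beta_v$ bounds on both functions. The boundary contributions at $\tau=s$ and $\tau=t$ survive the limit by strong $\L^2_{x,v}$ continuity, producing exactly the claimed identity $\angle{f(t)}{\tf(t)}=\angle{f(s)}{\tf(s)}$. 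Once this is in hand, (ii) and (iii) are essentially formal consequences of Theorem~\ref{thm:CP0T}.
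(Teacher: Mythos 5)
Your proposal is correct and follows essentially the same route as the paper: Gronwall applied to the energy equality for the norm bound, the polarized (forward--backward) energy identity from \cite{AIN} for the adjoint relation, uniqueness of the Cauchy problem for Chapman--Kolmogorov, and the $\C([0,T];\L^2_{x,v})$ regularity for strong continuity. The only cosmetic difference is in (ii), where you glue the two solutions on $[s,r]$ and $[r,t]$ and compare with $\Ga(\cdot,s)\psi$ on $[s,t]$, whereas the paper restricts $\Ga(\cdot,s)\psi$ to $[r,T]$ and compares it there with the solution issued from $\Ga(r,s)\psi$ — a mirror image of the same uniqueness argument, with the paper's version avoiding the need to verify that the concatenation is a weak solution across the junction.
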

 \begin{proof} To prove (i), 
  let $\psi,\tpsi\in \L^2_{x,v}$ and set  for $s\le \tau \le t$, $f(\tau)=\Ga(\tau,s)\psi$  and $\tilde{f}(\tau)=\tGa(\tau,t)\tpsi$. First, {from}
  Theorem~\ref{thm:CP0T},  {$f$ satisfies the energy equality on $[s,T]$, and by taking the derivative,
   and for almost every $t>s$, 
\begin{equation}
\label{eq:Gronwall}
 \frac{ \dd}{\dt} \|f(t)\|_{\L^2_{x,v}}^2 = -2\Re  \int_{\R^d} a_{t,x}(f,f)\dx \le 2 c_{0}\|f(t)\|_{\L^2_{x,v}}^2 
\end{equation}
  by \eqref{eq:weakelliptic-bis}. By Gronswall's lemma, 
  $ \|f(t)\|_{\L^2_{x,v}}^2 \le e^{2c_{0}(t-s)}\|\psi\|_{\L^2_{x,v}}^2$ and the boundedness 
  property follows}. The same argument applies to $\tilde f$ by reversing the sense of time. Let us now show the adjoint property. Assume $s<t$. 
 By polarisation of the energy equality for both $f$ and $\tf$,  see \cite[Theorem 3.8]{AIN},  $\tau\mapsto \angle{f(\tau)}{\tilde f(\tau)}$ (this is the $\L^2_{x,v}$ inner product) is absolutely continuous on the interval $[s,t]$  and
 \[
   \angle{f(t)}{\tilde f(t)}- \angle{f(s)}{\tilde f(s)}=  \int_{s}^t \int_{\R^d} -a_{\tau,x}(f,\tilde f) + a_{\tau,x}(f,\tilde f)\dx \dd\tau=0.
 \]
  As the left hand side equals $\angle{\Ga(t,s)\psi}{\tpsi}- \angle{\psi}{\tGa(s,t)\tpsi}$, this  proves the adjoint relation.
 
  To prove  (ii), we let  $\psi\in \L^2_{x,v}$, and set $f(\tau)=\Ga(\tau,s)\psi$ and $\tilde f(\tau)=\Ga(\tau,r)\Ga(r,s)\psi$ for $\tau\ge r$. We have $\tilde f(r)=f(r)$
  and using again the absolute continuity equalities, we see that $f$  solves the same kinetic Cauchy problem on $(r,T)\times \R^d\times \R^d$ as $\tilde f$, so by uniqueness, $f(t)$ and $\tilde f(t)$ agree on $[r,T]$.  
 
  The third assertion is an immediate consequence of the definition.
  \end{proof}

 \subsection{Fundamental solution}
 
 We define the fundamental solution as representing the inverse of 
 $(\partial_{t}+v\cdot\nabla_{x}) + \cA$ with zero initial condition. 

 We shall use the tensor product notation $\v \otimes \psi$ for products of functions $\v$ of the  $t$ variable and functions $\psi$ of the $(x,v)$ variables, that is $(\v \otimes \psi)(t,x,v)=\v(t)\psi(x,v)$. By a tensor product of test functions $\v \otimes \psi$, we mean  $\v\in \cD(0,T)$ and $\psi\in \cD(\R^{2d})$.   

 \begin{defn}[Definition of a fundamental solution]
 \label{def:FSOF}
 The fundamental solution to $(\partial_{t}+v\cdot\nabla_{x}) + \cA$ is a family of 
 operators $\Gamma=(\Gamma(t,s))_{ (s, t)\in [0,T]^2}$, called fundamental solution operators, with the following properties.
 \begin{enumerate}
 \item They are uniformly bounded operators on $\L^2_{x,v}$.
  \item $\Gamma(t,s)=0$  for almost every $(s,t)$ with $s>t$.
 \item  For any $\psi,\tpsi \in \cD(\R^{2d})$, $(s,t)\mapsto \angle {\Gamma(t,s)\psi}{\tpsi}$ is a locally integrable function:  
  \[\iint_{s\in K, \, t\in \tilde K} |\angle {\Gamma(t,s)\psi}{\tpsi}|\, \dd s\dt <\infty
   \]
   for compact subsets $K, \tilde K$ in $(0,T)$. 
   \item For any tensor product of test functions $S=\phi\otimes \psi$, the weak solution $f$ to  the kinetic Cauchy problem 
   $(\partial_{t}+v\cdot\nabla_{x})f + \cA f=S$ on $(0,T)$  with $f(0)=0$, satisfies
   \begin{align}
   \label{eq:represensation}
  \angle{f(t)}{\tpsi}=\int_{(0,t)} \phi(s)\angle {\Gamma(t,s)\psi}{\tpsi}\, \dd s
 \end{align}
  for all $\tpsi\in \cD(\R^{2d})$ and almost every $t\in (0,T]$. 
  \end{enumerate}
  
  One defines  the fundamental solution $\widetilde \Gamma=(\widetilde \Gamma(s,t))_{ (s,t) \in [0,T]^2}$  to the backward operator   $-(\partial_{s}+v\cdot \nabla_{x})+\cA^*$ analogously  and (ii) is replaced by  $\widetilde \Gamma(s,t)=0$ if $s>t$.
  \end{defn}

 \begin{lem}[Uniqueness of fundamental solutions]
 \label{lem:FSuniqueness}
 There is at most one fundamental solution in the sense of Definition~\ref{def:FSOF}.
  \end{lem}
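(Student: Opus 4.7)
The plan is to read uniqueness directly from the uniqueness of weak solutions to the inhomogeneous Cauchy problem given by Theorem~\ref{thm:CP0T}. Suppose $\Gamma_{1}$ and $\Gamma_{2}$ are two fundamental solutions and set $\Delta(t,s) = \Gamma_{1}(t,s) - \Gamma_{2}(t,s)$. First I would observe that for any tensor product source $S = \phi\otimes\psi$ with $\phi\in\cD(0,T)$ and $\psi\in\cD(\R^{2d})$, the Cauchy problem $(\partial_{t}+v\cdot\nabla_{x})f+\cA f=S$ with $f(0)=0$ admits a unique weak solution, so the two representation formulas must agree: for all $\tpsi\in\cD(\R^{2d})$ and almost every $t\in(0,T]$,
\[
\int_{0}^{t}\phi(s)\,\angle{\Delta(t,s)\psi}{\tpsi}\,\ds = 0.
\]

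Next I would fix countable families $\{\phi_{n}\}\subset\cD(0,T)$, $\{\psi_{m}\},\{\tpsi_{k}\}\subset\cD(\R^{2d})$, with $\{\phi_{n}\}$ dense in $\cD(0,T)$ for the standard test function topology and $\{\psi_{m}\},\{\tpsi_{k}\}$ dense in $\L^{2}_{x,v}$. Applying the previous identity to all triples $(n,m,k)$ and throwing away a countable union of null sets in $t$, I obtain a full-measure subset $E\subset (0,T]$ such that for every $t\in E$ and every $n,m,k$,
\[
\int_{0}^{t}\phi_{n}(s)\,\angle{\Delta(t,s)\psi_{m}}{\tpsi_{k}}\,\ds = 0.
\]
Fix $t\in E$ and $m,k$. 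The function $s\mapsto \angle{\Delta(t,s)\psi_{m}}{\tpsi_{k}}$ lies in $\L^{1}_{\loc}(0,T)$ by Definition~\ref{def:FSOF}(iii), and the $\phi_{n}$ supported in $(0,t)$ form a dense subset of $\cD(0,t)$; hence this function vanishes almost everywhere on $(0,t)$.

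Then I would upgrade the ``for a.e.\ $t$, for a.e.\ $s<t$'' statement to ``for a.e.\ $(s,t)$ with $s<t$''. Measurability of the map $(s,t)\mapsto \angle{\Delta(t,s)\psi_{m}}{\tpsi_{k}}$ on the triangle $\{s<t\}\subset(0,T)^{2}$ is guaranteed by local integrability, so Fubini applies and the exceptional set has two-dimensional Lebesgue measure zero. Taking a countable union over $m,k$, there is a full-measure subset $F\subset\{(s,t):0<s<t<T\}$ on which $\angle{\Delta(t,s)\psi_{m}}{\tpsi_{k}}=0$ for \emph{all} $m,k$ simultaneously. Since the $\psi_{m}$ and $\tpsi_{k}$ are dense in $\L^{2}_{x,v}$ and $\Delta(t,s)$ is a bounded operator on $\L^{2}_{x,v}$ (from Definition~\ref{def:FSOF}(i)), this forces $\Delta(t,s)=0$ as an operator on $\L^{2}_{x,v}$ for a.e.\ $(s,t)$. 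Combined with condition (ii), which already takes care of $s>t$, we conclude $\Gamma_{1}=\Gamma_{2}$ almost everywhere.

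The only non-routine step is the Fubini upgrade from iterated a.e.\ statements to a joint a.e.\ statement, which requires joint measurability of $(s,t)\mapsto\angle{\Gamma(t,s)\psi}{\tpsi}$; this is where the local integrability axiom (iii) in Definition~\ref{def:FSOF}, rather than a pointwise-in-$t$ boundedness statement, is used.
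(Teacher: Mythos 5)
Your proposal is correct and follows essentially the same route as the paper: both arguments exploit that item (iv) together with uniqueness of weak solutions forces the two representations of the same $f$ to agree for tensor-product sources, then localize in $(s,t)$ using the local integrability axiom (iii), and finally pass to all of $\L^2_{x,v}$ via a countable total set and the uniform boundedness (i). The only cosmetic difference is the localization step — the paper tests in both $s$ and $t$ against tensor test functions and invokes the Lebesgue differentiation theorem with approximate Dirac masses, whereas you fix $t$, use a countable dense family of time test functions, and then upgrade the iterated a.e.\ statement by Fubini; both are valid and rest on the same hypotheses.
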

 
 \begin{proof} 
Suppose  $\Gamma, \Gamma'$ are two fundamental solutions.  \pascal{Applying the items (iii), (ii) and (iv)} with source terms $S$ consisting of  tensor products of test functions $\v \otimes \psi$, and testing the obtained weak solution   against a tensor $\tv\otimes \tpsi$ of the same type, we have from \eqref{eq:represensation} using Fubini's theorem (allowed by (iii)),   
 \[ \iint_{0< s \le t< T} \v(s)\angle {\Gamma(t,s)\psi}{\tpsi}\overline{\tv}(t)\, \dd s\dt  =  \iint_{0 < s \le t<T}  \v(s)\angle{\Gamma'(t,s)\psi}{\tpsi}\overline{\tv}(t)\, \dd s\dt. 
 \]
 Letting $\v$, $\tv$  approximate Dirac masses at some $s$ and $t$ respectively,  we obtain from Lebesgue's differentiation theorem (justified  by the local integrability condition (iii)) 
 \[\angle {\Gamma(t,s)\psi}{\tpsi}=\angle {\Gamma'(t,s)\psi}{\tpsi}
 \]
 for almost every $(s,t)$ with $0<s < t<T$.  At this stage, the almost everywhere equality can depend on $\psi,\tpsi$. Applying this for test functions $\psi,\tpsi$ describing a countable total set in $\L^2_{x,v}$, and using that the two operators $\Gamma(t,s), \Gamma'(t,s)$ are bounded on $\L^2_{x,v}$ by (i), we easily conclude they agree almost everywhere.
  \end{proof}

  \subsection{Green operators  and fundamental solution operators are the same}

 Having defined two families, we now show they agree. This is where it is useful to have proved the well-posedness of the kinetic Cauchy problem for $\L^1_{t}\L^2_{x,v}$ sources.

 \begin{thm} 
 \label{thm:exunfundsol} The family of Green operators is the fundamental solution (up to almost everywhere equality).   
  \end{thm}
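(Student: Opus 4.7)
The plan is to set $\Gamma(t,s) := \Ga(t,s)$ and verify the four defining properties of Definition~\ref{def:FSOF}. Properties (i) and (ii) are immediate from Proposition~\ref{prop:propGreenop}(i) and Definition~\ref{defn:Greenoperators}, respectively. For (iii), the operator-norm bound $\|\Ga(t,s)\|_{\L^2_{x,v} \to \L^2_{x,v}} \le e^{c_{0}(t-s)}$ yields $|\angle{\Ga(t,s)\psi}{\tpsi}| \le e^{c_{0}T}\|\psi\|_{\L^2_{x,v}}\|\tpsi\|_{\L^2_{x,v}}$ on the triangular region where $\Ga$ is non-zero, and joint measurability of $(s,t)\mapsto \angle{\Ga(t,s)\psi}{\tpsi}$ on this region follows from the strong continuity in each variable stated in Proposition~\ref{prop:propGreenop}(iii) combined with the Chapman-Kolmogorov relation, which together upgrade separate to joint strong continuity. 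One thus obtains local $\L^\infty$ bounds, well beyond the required local integrability.

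The heart of the matter is (iv). Fix $\psi,\tpsi\in\cD(\R^{2d})$ and $\v\in\cD(0,T)$. The source $S := \v\otimes\psi$ lies in $\L^1_{t}\L^2_{x,v}$, so Theorem~\ref{thm:CP0T} supplies a unique weak solution $f$ to the Cauchy problem with $f(0)=0$, belonging moreover to $\C([0,T];\L^2_{x,v})$. For each fixed $t\in(0,T]$, introduce the backward Green solution $\tf(\tau) := \tGa(\tau,t)\tpsi$ on $\tau\in[0,t]$; by construction, it solves the adjoint backward Cauchy problem with zero source and final value $\tpsi$.

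The key step is the polarized energy equality for the pair $(f,\tf)$, exactly as invoked in the proof of Proposition~\ref{prop:propGreenop}(i) and supplied by \cite[Theorem~3.8]{AIN}: $\tau\mapsto\angle{f(\tau)}{\tf(\tau)}$ is absolutely continuous on $[0,t]$, and upon differentiating, the transport contributions cancel by integration by parts in $x$ while the $a_{\tau,x}$ contributions cancel because $\tf$ solves the adjoint equation. What remains is
\[
 \frac{\dd}{\dd\tau}\angle{f(\tau)}{\tf(\tau)} = \angle{S(\tau)}{\tf(\tau)} = \v(\tau)\angle{\psi}{\tGa(\tau,t)\tpsi}.
\]
Integrating on $[0,t]$, using $f(0)=0$ and $\tf(t)=\tpsi$, and then invoking the adjoint relation $\tGa(\tau,t)=\Ga(t,\tau)^*$ from Proposition~\ref{prop:propGreenop}(i), yields
\[
 \angle{f(t)}{\tpsi} = \int_{0}^{t} \v(\tau)\angle{\Ga(t,\tau)\psi}{\tpsi}\,\dd\tau,
\]
which is precisely \eqref{eq:represensation}. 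Lemma~\ref{lem:FSuniqueness} then delivers the qualifier ``up to almost everywhere equality''.

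The main obstacle I anticipate is ensuring that the polarized energy equality remains valid when the forward source lies in $\L^1_{t}\L^2_{x,v}$ rather than the more regular space $\L^2_{t,x}\H^{-\beta}_{v}$; this is, however, built into the class of weak solutions from \cite{AIN} on which Theorem~\ref{thm:CP0T} rests, so the argument reduces to a clean quotation and no quantitative use of the coefficients of $\cA$ is needed.
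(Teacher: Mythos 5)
Your proposal is correct and follows essentially the same route as the paper: verify properties (i)--(iii) from Proposition~\ref{prop:propGreenop} and the uniform bound, and establish (iv) by pairing the forward solution with $\tf(\tau)=\tGa(\tau,t)\tpsi$ via the polarized energy (absolute continuity) identity from \cite[Theorem~3.8]{AIN} and the adjoint relation $\tGa(\tau,t)=\Ga(t,\tau)^*$, then conclude with Lemma~\ref{lem:FSuniqueness}. The only cosmetic difference is in (iii), where your upgrade of separate to joint continuity via Chapman--Kolmogorov is unnecessary (and not fully justified as stated, since Proposition~\ref{prop:propGreenop}(iii) only gives weak continuity of $s\mapsto \Ga(t,s)\psi$ through adjointness): separate continuity already yields Borel measurability of $(s,t)\mapsto\angle{\Ga(t,s)\psi}{\tpsi}$, which combined with the uniform bound is all that the local integrability condition requires, exactly as in the paper.
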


 \begin{proof} We partly borrow  ideas from \cite{ae}. Given that there is uniqueness of a fundamental solution, up to equality almost everywhere, by Lemma~\ref{lem:FSuniqueness}, it suffices to prove that the Green operators satisfy the requirements in Definition~\ref{def:FSOF}.
  
  That Green operators are uniformly bounded follows from Proposition~\ref{prop:propGreenop}. Item (ii) is in the definition of $G(t,s)$.  We set $C=\sup \|\Ga(t,s)\| \le e^{c_{0}T}$.

 Next, we elucidate measurability issues. Remark that for $\psi,\tpsi \in \L^2_{x,v}$, since $(s,t)\mapsto \angle {\Ga(t,s)\psi}{\tpsi}= \angle {\psi}{\tGa(s,t)\tpsi}$ is separately continuous on $\{(s,t)\in [0,T]^2\, ; \,  s\le t\}$, it is a (Borel) measurable function on this set. Furthermore,  
 \begin{equation}
 \label{eq:|kernel|}
 \iint_{0<s \le t<T} \big| \v(s)\angle {\Ga(t,s)\psi}{\tpsi}\overline\tv(t) \big|\ds \dt \le C  \|\v\|_{\L^1(0,T)}\|\tv\|_{\L^1(0,T)}\|\psi\|_{\L^2_{x,v}}\|\tpsi\|_{\L^2_{x,v}}. 
 \end{equation}
 The third item in Definition~\ref{def:FSOF} follows.  
 It remains to prove the last one.

 Let $\phi\in \L^1(0,T)$, $\psi, \tpsi \in \L^2_{x,v}$. Let $f$ be the weak solution to the kinetic Cauchy problem on $(0,T)$ with source $\v\otimes \psi\in \L^1_{t}\L^2_{x,v}$ and initial value 0, and   $\tilde f(s)=\tGa(s,t)\tpsi$ for $0\le s\le t$ with fixed $t$. Then $\tilde f(t)=\tpsi$ and $f(0)=0$. Thus, using again the absolute continuity of $s\mapsto \angle{f(s)}{\tilde f(s)}$ on $[0, t]$, since
 $ \angle{f(t)}{\tpsi}= \angle{f(t)}{\tilde f(t)}- \angle{f(0)}{\tilde f(0)}$, we have
  $$\angle{f(t)}{\tpsi}=  \int_{0}^t \iint_{{\R^{2d}}} \v\otimes\psi \ \overline{\tilde f} \dv \dx \ds.
  $$
   Using  the definition of $\tilde f$ and $\tGa(s,t)=\Ga(t,s)^*$, we obtain
  $$
   \angle{f(t)}{\tpsi}=  \int_{0}^t \v(s)\angle {\psi}{\tGa(s,t)\tpsi}\ds= \int_{0}^t \v(s)\angle {\Ga(t,s)\psi}{\tpsi}\ds$$
     as desired. 
 \end{proof}

 \subsection{Representation of weak solutions to the kinetic Cauchy problems}

Having identified Green operators to fundamental solution operators, the latter inherit the properties of the former as stated in Proposition~\ref{prop:propGreenop}.  We use from now on the more traditional notation $\Gamma(t,s)$.  We may state a complete representation theorem for {all} weak solutions,  with specified convergence issues. If not mentioned, we mean that the time variable belongs to $(0,T)$. 

 \begin{thm}[Representation of weak solutions to the kinetic Cauchy problem] \label{thm:representation0T}
 Under the assumptions of  Theorem~\ref{thm:CP0T} on $(0,T)$ for $S$ and $\psi$, 
  the corresponding weak solution $f$  is represented  for any $t\in [0,T]$ in $\L^2_{x,v}$  by
  \begin{align}
 \label{eq:representationweaksolCP0T}
 f(t)=\Gamma(t,0)\psi+ \int_{0}^t \Gamma(t,s) S(s)\ds
 \end{align}
 where the integral is meant in the following sense.

 If $S\in \L^1_{t}\L^2_{x,v}$, then the integral is defined in the strong (Bochner) sense. 

 If $S= D_{v}^\beta\, F$ with $F \in  \L^2_{t,x,v}$, the integral is defined weakly in $\L^2_{x,v}$ by $($with $\L^2_{x,v}$ inner products$)$
 \begin{align}
 \label{eq:representationweaksol1}
 \angle{f(t)}{\tpsi}=\int_{0}^t \angle{F(s)}{D_{v}^\beta\, \widetilde\Gamma(s,t)\tpsi}\ds, \quad \tpsi \in \L^2_{x,v}.
 \end{align}

 If $S= D_{x}^{\frac{\beta}{2\beta+1}} F$ with $F \in  \L^2_{t,x,v}$, the integral is defined weakly in $\L^2_{x,v}$ by $($with $\L^2_{x,v}$ inner products$)$
 \begin{align}
 \label{eq:representationweaksol2}
 \angle{f(t)}{\tpsi}=\int_{0}^t \angle{F(s)}{D_{x}^{\frac{\beta}{2\beta+1}}\, \widetilde\Gamma(s,t)\tpsi}\ds, \quad \tpsi \in \L^2_{x,v}.
 \end{align}

 The analogous  representation  holds for  weak solutions to the backward kinetic Cauchy problem  for 
 $-(\partial_{t}+v\cdot \nabla_{x})+\cA^*$ with $\widetilde \Gamma(s,t)$. 
 \end{thm}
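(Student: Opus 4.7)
By linearity I would split into the data and source contributions. The initial-datum part is immediate: if $f_{0}$ solves the homogeneous Cauchy problem with $f_{0}(0)=\psi$, then Definition~\ref{defn:Greenoperators} together with Theorem~\ref{thm:exunfundsol} gives $f_{0}(t)=\Gamma(t,0)\psi$ for every $t\in[0,T]$. Hence it suffices to treat the case $\psi=0$ and to prove each of the three source representations separately, the general statement following from the decomposition $S=S_{1}+S_{2}+S_{3}$ and superposition.

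To treat the source part I would rely on duality with the backward problem, extending the argument already used in the proof of Theorem~\ref{thm:exunfundsol}. Fix $t\in(0,T]$ and $\tpsi\in\L^2_{x,v}$, and set $\tf(s)=\widetilde\Gamma(s,t)\tpsi$ for $s\in[0,t]$. Theorem~\ref{thm:CP0T} applied to the backward Cauchy problem on $[0,t]$ yields $\tf\in\L^2_{s,x}\H^{\beta}_{v}\cap\L^2_{s,v}\H^{\beta/(2\beta+1)}_{\vphantom{s,v}x}\cap\C([0,t];\L^2_{x,v})$ with $\tf(t)=\tpsi$. The polarized energy identity \cite[Theorem~3.8]{AIN} (the same tool used in Proposition~\ref{prop:propGreenop} and Theorem~\ref{thm:exunfundsol}, but now allowing a forward source) then asserts that $s\mapsto\angle{f(s)}{\tf(s)}$ is absolutely continuous on $[0,t]$; the transport contributions cancel by integration by parts in $x$, and the two elliptic contributions $\angle{\cA f}{\tf}$ and $\angle{f}{\cA^{*}\tf}$ both reduce to $\int a_{s,x}(f,\tf)\dx$, hence also cancel. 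Combined with $f(0)=0$ and $\tf(t)=\tpsi$ this produces
\[
\angle{f(t)}{\tpsi} \;=\; \int_{0}^{t}\angle{S(s)}{\tf(s)}\ds,
\]
each pairing read in the duality natural to the ambient space of $S$.

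The three representations are then direct consequences of this identity. When $S\in\L^1_{t}\L^2_{x,v}$, the map $s\mapsto\Gamma(t,s)S(s)=\widetilde\Gamma(s,t)^{*}S(s)$ is strongly measurable into $\L^2_{x,v}$ by Proposition~\ref{prop:propGreenop} and bounded by $e^{c_{0}T}\|S(s)\|_{\L^2_{x,v}}$, so the Bochner integral $\int_{0}^{t}\Gamma(t,s)S(s)\ds$ converges in $\L^2_{x,v}$ and its pairing with $\tpsi$ matches the identity above. When $S=D_{v}^{\beta}F$ with $F\in\L^2_{t,x,v}$, the pairing $\angle{S(s)}{\tf(s)}$ between $\H^{-\beta}_{v}$ and $\H^{\beta}_{v}$ equals $\angle{F(s)}{D_{v}^{\beta}\widetilde\Gamma(s,t)\tpsi}$ in $\L^2_{x,v}$, which is in $\L^1_{s}(0,t)$ by Cauchy--Schwarz since $D_{v}^{\beta}\tf\in\L^2_{t,x,v}$; this is exactly \eqref{eq:representationweaksol1}. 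The case $S=D_{x}^{\beta/(2\beta+1)}F$ is identical, using now $D_{x}^{\beta/(2\beta+1)}\tf\in\L^2_{t,x,v}$, yielding \eqref{eq:representationweaksol2}. Finally, the right-hand side of \eqref{eq:representationweaksol1} (respectively \eqref{eq:representationweaksol2}) is an antilinear functional of $\tpsi$ whose norm is controlled via \eqref{eq:estimateweaksolution} applied to the backward problem with final datum $\tpsi$; Riesz representation then identifies it with a unique element of $\L^2_{x,v}$, which by construction is $f(t)$. The backward statement follows by reversing time and swapping the roles of forward and adjoint. The main obstacle is to invoke the polarized energy identity in the correct form when the forward source has only distributional regularity in $v$ or $x$ and to distribute the derivatives $D_{v}^{\beta}, D_{x}^{\beta/(2\beta+1)}$ onto $\tf$; this is precisely what \cite[Theorem~3.8]{AIN} delivers for the three source spaces of \eqref{eq:estimateweaksolution}, so once the pairings are tracked no further analytic work is needed.
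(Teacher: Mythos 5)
Your proposal is correct and rests on the same central idea as the paper's proof, namely duality of the forward solution against the backward Green family $\tilde f(s)=\widetilde\Gamma(s,t)\tpsi$ via the absolute continuity of $s\mapsto\angle{f(s)}{\tilde f(s)}$. The difference is in how the rough sources are handled. You invoke the polarized energy identity of \cite[Theorem 3.8]{AIN} directly for a forward solution whose source lies in the full space $\L^2_{t,x}\H^{-\beta}_{v}+\L^2_{t,v}\H^{-\beta/(2\beta+1)}_{\vphantom{t,v}x}+\L^1_{t}\L^2_{x,v}$, so the identity $\angle{f(t)}{\tpsi}=\int_0^t\angle{S(s)}{\tilde f(s)}\ds$ is obtained in one stroke and the three representations are just re-readings of the pairing. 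The paper instead only uses the polarized identity for tensor-product sources $\v\otimes\psi$ (this is already contained in the proof of Theorem~\ref{thm:exunfundsol}), upgrades that identity from almost every $t$ and test $\tpsi$ to all $t$ and all $\tpsi\in\L^2_{x,v}$ by continuity, and then passes to general $S$ by density, using the continuous dependence \eqref{eq:estimateweaksolution} on one side and the Green-operator integral estimates of Lemma~\ref{prop:SFE} on the other. Your route is shorter but places the full analytic burden on the cited polarized identity holding for sources of merely distributional regularity in $v$ or $x$; the paper's density argument deliberately avoids needing that identity beyond smooth tensor sources. Both are legitimate; if you keep your version, you should check (or state explicitly) that \cite[Theorem 3.8]{AIN} is indeed proved in the generality you use it, and tighten the measurability point for the Bochner case: strong measurability of $s\mapsto\Gamma(t,s)S(s)$ follows from weak measurability plus separability via Pettis's theorem, as the paper notes, rather than directly from Proposition~\ref{prop:propGreenop}.
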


For simplicity of the presentation, we forget some parentheses in our notation here and subsequently. For example, $\Gamma(s,t)\tpsi $ should be 
 $\Gamma(s,t)(\tpsi)$,  $D_{v}^\beta\, \widetilde\Gamma(s,t)\tpsi$ should be  $D_{v}^\beta\, (\widetilde\Gamma(s,t)(\tpsi))$, that is $D_{v}^\beta$ applied to the function  $(x,v)\mapsto (\widetilde\Gamma(s,t)(\tpsi))(x,v)$. We do the same for $D_{x}^{\frac{\beta}{2\beta+1}}\, \widetilde\Gamma(s,t)\tpsi$.

 \begin{lem}[Integral estimates from the Green operators]
 \label{prop:SFE} For all $\psi, \tpsi \in \L^2_{x,v}$, $s,t\in [0,T]$, we have 
 \begin{equation}
 \int_{s}^T \|D_{v}^\beta\, \Gamma(t,s)\psi\|^2_{\L^2_{x,v}} \dt +  \int_{s}^T \|D_{\vphantom{v} x}^{\frac{\beta}{2\beta+1}}\, \Gamma(t,s)\psi\|^2_{\L^2_{x,v}}\dt \lesssim\|\psi\|^2_{\L^2_{x,v}} .\end{equation}
 \begin{equation}
 \int_{0}^t \|D_{v}^\beta\, \widetilde\Gamma(s,t)\tpsi\|^2_{\L^2_{x,v}}\ds +  \int_{0}^t \|D_{x}^{\frac{\beta}{2\beta+1}}\, \widetilde\Gamma(s,t)\tpsi\|^2_{\L^2_{x,v}}\ds \lesssim\|\tpsi\|^2_{\L^2_{x,v}} .\end{equation}
 The implicit constants depend on $d,\beta, \lambda,\Lambda, c_{0}, c^0, T$.
 \end{lem}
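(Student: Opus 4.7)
The plan is to recognise each of the four terms as the $\L^2_{t,x,v}$ norm of a weak solution to a homogeneous Cauchy problem, and then to read off the estimate directly from the quantitative bound \eqref{eq:estimateweaksolution} of Theorem~\ref{thm:CP0T}.

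More precisely, fix $s \in [0,T]$ and $\psi \in \L^2_{x,v}$. By Definition~\ref{defn:Greenoperators} (together with the time-translation invariance of the hypotheses \eqref{e:ellip-upper} and \eqref{eq:weakelliptic-bis}, which hold uniformly in $(t,x)$), the map $\tau \mapsto \Gamma(\tau,s)\psi$ is, as an element of $\L^2_{\tau,x}\H^\beta_v$ on the interval $(s,T)$, precisely the weak solution $f$ to the kinetic Cauchy problem with zero source and initial datum $\psi$ at time $s$. Applying Theorem~\ref{thm:CP0T} on $[s,T]$ (with $S=0$) yields
\[
\|D_v^\beta f\|_{\L^2((s,T);\L^2_{x,v})}^2 + \|D_x^{\beta/(2\beta+1)} f\|_{\L^2((s,T);\L^2_{x,v})}^2 \lesssim \|\psi\|_{\L^2_{x,v}}^2,
\]
which is exactly the first claimed inequality.

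For the second inequality, one argues symmetrically on the backward side. Fix $t \in [0,T]$ and $\tpsi \in \L^2_{x,v}$. By Definition~\ref{defn:Greenoperators}(ii), the map $s \mapsto \widetilde\Gamma(s,t)\tpsi$ is the weak solution $\tilde f$ to the backward kinetic Cauchy problem on $[0,t]$ with zero source and final datum $\tpsi$ at time $t$. Invoking the backward analogue of Theorem~\ref{thm:CP0T} (as recorded in the final remark of Section~\ref{sec:review}) gives
\[
\|D_v^\beta \tilde f\|_{\L^2((0,t);\L^2_{x,v})}^2 + \|D_x^{\beta/(2\beta+1)} \tilde f\|_{\L^2((0,t);\L^2_{x,v})}^2 \lesssim \|\tpsi\|_{\L^2_{x,v}}^2,
\]
which is the second inequality.

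There is no real obstacle here; the only minor points to mind are that the constants in \eqref{eq:estimateweaksolution} are controlled in terms of $T$ alone (not on the position of the subinterval $[s,T]$ or $[0,t]$), which is immediate from the uniformity of \eqref{e:ellip-upper} and \eqref{eq:weakelliptic-bis} in the time variable, and that the dependence on $d,\beta,\lambda,\Lambda,c_0,c^0,T$ matches the asserted one.
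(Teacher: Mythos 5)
Your proof is correct and is exactly the paper's argument: the paper's own proof is a one-line remark that the lemma is just the estimate \eqref{eq:estimateweaksolution} of Theorem~\ref{thm:CP0T} (and its backward analogue) for the zero-source forward and backward Cauchy problems, rewritten in terms of the Green operators, which is precisely what you spell out.
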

\begin{proof}
 This is a reformulation of the estimates for weak solutions to the forward and backward kinetic Cauchy problems with zero source in terms of integral estimates for the Green operators $\Ga(t,s)$, which are now identified to $\Gamma(t,s)$.
\end{proof}

 \begin{proof}
  [Proof of Theorem~\ref{thm:representation0T}] If $S=0$, then we know that $f(t)=\Ga(t,0)\psi=\Gamma(t,0)\psi$. We next assume that the initial value is 0. For a tensor product of test functions $S=\v\otimes \psi$, we have established  \eqref{eq:represensation} for all $\tpsi\in \cD(\R^{2d})$ and almost every $t\in (0,T]$. 
 We extend its validity. Namely, \begin{align}
  \label{eq:represntationallpsiallt}
  \angle{f(t)}{\tpsi}=\int_{0}^t \phi(s)\angle {\Gamma(t,s)\psi}{\tpsi}\, \dd s
 \end{align} holds for all $\tpsi\in \L^2_{x,v}$ and all $t\in [0,T]$.  
  First, the integrand in the right-hand side is well-defined for $\tpsi\in \L^2_{x,v}$ so we may take $\tpsi\in \L^2_{x,v}$ as $f(t)\in \L^2_{x,v}$. Secondly, the integrand is continuous on $[0,t]$ as a function of $s$. Thirdly,   it is not hard to see that as a function of $t$, the integral in the right-hand side with $\tpsi\in \L^2_{x,v}$   is continuous: the equality holds for all $t$ and not just almost everywhere. 
  
  Having proved \eqref{eq:represntationallpsiallt}, it remains to fix $t$ and argue by density arguments.    When $S\in \L^1_{t}\L^2_{x,v}$, we proceed as follows.  
 We see that  $s\mapsto |\angle{\Gamma(t,s)S(s)}\tpsi| = |\angle{S(s)}{\widetilde \Gamma(s,t)\tpsi}| $ is integrable on $(0,t)$ for all $\tpsi \in \L^2_{x,v}$
 and this shows that the integral in \eqref{eq:representationweaksolCP0T} weakly converges in $\L^2_{x,v}$. As $\L^2_{x,v}$ is a separable Hilbert space,  it is classical that this is equivalent to strong measurability of $s\mapsto {\Gamma(t,s)S(s)}$ on $(0,t)$ in $\L^2_{x,v}$ (\cite[Chapter 1]{MR3617205}), and thus the integral is in the sense of Bochner on $(0,t)$ as an $\L^2_{x,v}$ function. We now argue by density since the above tensors are total in $\L^2_{x,v}$. 

 The span of tensor products of test functions $\v\otimes\psi$ also is a dense subspace in the other inhomogeneous source spaces in the statement of the kinetic Cauchy problem.    We conclude again with a density argument using the estimates in Lemma~\ref{prop:SFE}. 
 \end{proof}

 Having proved the representation, we can reformulate the estimates for weak solutions to the forward and backward kinetic Cauchy problems with zero initial data, in terms of the fundamental solution operators, completing the uniform boundedness on $\L^2_{x,v}$ and the ones in Lemma~\ref{prop:SFE}. 
 \begin{cor}[Integral estimates for the fundamental solution operators]
 \label{prop:SFE2}   For all $F$  or $\widetilde F $ for which the right-hand side is finite in each estimate, we have, with $D_{1},D_{2} \in \{D_{v}^\beta, D_{\vphantom{1} x}^{\frac{\beta}{2\beta+1}}, I\} $,
  \begin{equation}
 \label{eq:1}
 \int_{0}^T \bigg\|\int_{0}^t D_{1} \Gamma(t,s)D_{2} F(s)\, \ds\bigg\|^2_{\L^2_{x,v}} \dt  \lesssim \int_{0}^T\|F(s)\|^2_{\L^2_{x,v}} \ds .\end{equation}
 \begin{equation}
 \label{eq:2}
 \sup_{t\in [0,T]} \bigg\|\int_{0}^t  \Gamma(t,s)D_{2} F(s)\, \ds\bigg\|^2_{\L^2_{x,v}} 
 \lesssim \int_{0}^T\|F(s)\|^2_{\L^2_{x,v}} \ds.
 \end{equation}
 \begin{equation}
 \label{eq:3}
 \sup_{t\in [0,T]} \bigg\|\int_{0}^t  \Gamma(t,s) F(s)\, \ds\bigg\|_{\L^2_{x,v}} 
 \lesssim \int_{0}^T\|F(s)\|_{\L^2_{x,v}} \ds.
 \end{equation}
 \begin{equation}
 \label{eq:4}
 \int_{0}^T \bigg\|\int_{s}^T D_{1} \widetilde \Gamma(s,t)D_{2} \widetilde F(t)\, \dt\bigg\|^2_{\L^2_{x,v}} \ds  \lesssim \int_{0}^T\|\widetilde F(t)\|^2_{\L^2_{x,v}} \dt .\end{equation}
 \begin{equation}
 \label{eq:5}
 \sup_{s\in [0,T]} \bigg\|\int_{t}^T  \widetilde \Gamma(s,t)D_{2} \widetilde F(t)\, \dt\bigg\|^2_{\L^2_{x,v}} 
 \lesssim \int_{0}^T\|\widetilde F(t)\|^2_{\L^2_{x,v}} \dt.
 \end{equation}
 \begin{equation}
 \label{eq:6}
 \sup_{s\in [0,T]} \bigg\|\int_{s}^T  \widetilde \Gamma(s,t) \widetilde F(t)\, \dt\bigg\|_{\L^2_{x,v}} 
 \lesssim \int_{0}^T\|\widetilde F(t)\|_{\L^2_{x,v}} \dt.
 \end{equation}
 The implicit constants depend on $d,\beta, \lambda,\Lambda, c_{0},c^0, T$. The integrals inside the norms in the left hand sides are all weakly defined by density, and the ones in \eqref{eq:2} and   \eqref{eq:5} with $D_{2}=I$, in  \eqref{eq:3}  and  \eqref{eq:6} are also strongly defined. 
 \end{cor}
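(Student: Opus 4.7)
The plan is to read each of \eqref{eq:1}--\eqref{eq:6} as an a priori estimate from Theorem~\ref{thm:CP0T} applied to the weak solution of the Cauchy problem (or backward Cauchy problem) with zero initial datum, using the representation established in Theorem~\ref{thm:representation0T}. Thus for \eqref{eq:1}--\eqref{eq:3}, given $F$ for which the right-hand side is finite, set $S=D_{2}F$, let $f$ be the unique weak solution of $(\partial_{t}+v\cdot\nabla_{x})f+\cA f=S$ on $(0,T)$ with $f(0)=0$, and invoke Theorem~\ref{thm:representation0T} to write
\[
f(t)=\int_{0}^t \Gamma(t,s)\,D_{2}F(s)\ds
\]
in the sense discussed there (weakly via \eqref{eq:representationweaksol1}, \eqref{eq:representationweaksol2} when $D_{2}\neq I$, and as a Bochner integral when $D_{2}=I$). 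Applying then the a priori estimate \eqref{eq:estimateweaksolution} with $\psi=0$ yields each of the left-hand sides of \eqref{eq:1}--\eqref{eq:3} bounded by $\|S\|$ measured in the appropriate source space.

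It remains to match source norms to $\|F\|_{\L^{2}_{t,x,v}}$ or $\|F\|_{\L^{1}_{t}\L^{2}_{x,v}}$. When $S=D_{v}^{\beta}F$ with $F\in\L^{2}_{t,x,v}$, Plancherel gives $\|S\|_{\L^{2}_{t,x}\H^{-\beta}_{v}}\le \|F\|_{\L^{2}_{t,x,v}}$ (since $(1+|\xi|^{2})^{-\beta/2}|\xi|^{\beta}\le 1$); likewise for $D_{x}^{\beta/(2\beta+1)}$. When $D_{2}=I$ and $F\in\L^{2}_{t,x,v}$, since $T<\infty$ we have $F\in\L^{1}_{t}\L^{2}_{x,v}$ with $\|F\|_{\L^{1}_{t}\L^{2}_{x,v}}\le T^{1/2}\|F\|_{\L^{2}_{t,x,v}}$, which handles the case relevant for \eqref{eq:2} with $D_{2}=I$; for \eqref{eq:3}, we directly use $F\in\L^{1}_{t}\L^{2}_{x,v}$. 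The three choices of $D_{1}$ in \eqref{eq:1} pick out respectively the $\|D_{v}^{\beta}f\|_{\L^{2}}$, $\|D_{x}^{\beta/(2\beta+1)}f\|_{\L^{2}}$, and $\|f\|_{\L^{2}}$ terms of \eqref{eq:estimateweaksolution}, while \eqref{eq:2} and \eqref{eq:3} pick out the $\sup_{t}\|f(t)\|_{\L^{2}_{x,v}}$ term. The bounds \eqref{eq:4}--\eqref{eq:6} are proved identically using the representation and a priori estimates for the backward kinetic Cauchy problem on $[0,T]$ with final value $0$, i.e.\ the adjoint theorem mentioned in the last remark of Section~\ref{sec:review}, with $\widetilde\Gamma(s,t)$ replacing $\Gamma(t,s)$.

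There is no real obstacle here; the statement is purely a reformulation of Theorem~\ref{thm:CP0T} once the representation of Theorem~\ref{thm:representation0T} is in place. The only points worth noting are (a) the strong/weak distinction in how the integrals inside the norms are defined, which is already settled in the proof of Theorem~\ref{thm:representation0T} (Bochner when $D_{2}=I$, weak via $\L^{2}_{x,v}$-pairing otherwise), and (b) the fact that the estimates for the integrals are a priori valid on the dense classes of sources used in that proof and extend to the full class by the density arguments already employed. No new analytical input is required.
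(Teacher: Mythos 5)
Your proposal is correct and matches the paper's intent exactly: the paper states this corollary without a separate proof, presenting it as a direct reformulation of the a priori estimates \eqref{eq:estimateweaksolution} (and their backward analogues) via the representation of Theorem~\ref{thm:representation0T}, which is precisely what you do. The bookkeeping of source norms (Plancherel for $D_2\neq I$, Cauchy--Schwarz in time for $D_2=I$) and the weak/strong interpretation of the integrals are handled appropriately.
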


Again, $D_{1} \Gamma(t,s)D_{2} F(s)$ should be thought as $D_{1} \big(\Gamma(t,s)(D_{2} F(s))\big)$, etc. 

 \subsection{The case of infinite intervals and/or homogeneous spaces}
 \label{sec:infiniteintervals}

 We describe the results in the situations of each remark after Theorem~\ref{thm:CP0T}, without proof. Note that, in some arguments with homogeneous spaces, one needs to replace $\psi, \tpsi \in \cD(\R^{2d})$ by $\psi, \tpsi$ in appropriate subsets of  $\cS(\R^{2d})$, see \cite{AIN}.

 \
 
 \paragraph{1) Case of $\R$ with inhomogeneous operators and  spaces}

 \

  We want to represent  $\cK_{\cA+c}\,$,  the inverse of  $(\partial_{t}+v\cdot \nabla_{x})+\cA+c$, $c>c_{0}$, under the assumptions \eqref{e:ellip-upper} and \eqref{eq:weakelliptic-bis} \pascal{as defined in Remark \ref{rem:1}}. In this case,   there is a unique fundamental solution $\Gamma$ (which we could denote by $\Gamma_{c}$) defined over $-\infty< s\le t<\infty$ and  $\Gamma(t,s)$ have   operator norms on $\L^2_{x,v}$  not exceeding $e^{-(c-c_{0})(t-s)}$.  This exponential decay can be seen from the use of Gronwall's inequality in \eqref{eq:Gronwall}. 

 \begin{prop}[Representation of weak solutions on $\R$ for inhomogeneous spaces] \label{thm:representationRinhom}
 For $S$   in the inhomogeneous spaces corresponding to those in Theorem~\ref{thm:CP0T}, $\cK_{\cA+c\,} S$  is represented  for any $t\in \R$ in $\L^2_{x,v}$ by
 \begin{align}
 \label{eq:representationweaksolRinhom}
 (\cK_{\cA+c\,} S)(t)=\int_{-\infty}^t \Gamma(t,s) S(s)\ds.
 \end{align}
The convergence for the integral is as in Theorem~\ref{thm:representation0T}, and there is strong convergence also when $S\in \L^2_{t,x,v}$ because of exponential decay. 
 The estimates of Lemma~\ref{prop:SFE} and Corollary~\ref{prop:SFE2} hold for $(-\infty,\infty)$ replacing $(0,T)$, with $D_{1},D_{2} \in \{D_{v}^\beta, D_{\vphantom{1} x}^{\frac{\beta}{2\beta+1}}, I\} $,  and the implicit constants depend on $d,\beta, \lambda,\Lambda, c-c_{0}, c^0$.
 \end{prop}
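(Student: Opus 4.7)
The plan is to transfer to $\R$ the three-step scheme already used on $[0,T]$: (i) define Green operators and record their decay, (ii) identify them with the unique fundamental solution by an absolute-continuity pairing argument, and (iii) extend the representation from tensor sources to all admissible sources by density. All structural arguments from Proposition~\ref{prop:propGreenop}, Lemma~\ref{lem:FSuniqueness}, and Theorems~\ref{thm:exunfundsol}--\ref{thm:representation0T} carry over with only cosmetic changes; the only genuinely new ingredient is the behavior at $t=-\infty$.

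For (i), given $s\in\R$ and $\psi\in \L^2_{x,v}$, I define $\Gamma(t,s)\psi$ ($t\ge s$) to be the value at time $t$ of the weak solution on $[s,\infty)$ of $(\partial_{t}+v\cdot\nabla_{x})f+(\cA+c)f=0$ with $f(s)=\psi$, using Remark~\ref{rem:2} applied to the problem shifted to origin $s$, and $\widetilde\Gamma(s,t)$ symmetrically from the backward equation. The Gronwall step in (\ref{eq:Gronwall}), rewritten for $\cA+c$, yields the contraction bound $\|\Gamma(t,s)\|_{\L^2_{x,v}\to \L^2_{x,v}}\le e^{-(c-c_0)(t-s)}$; the adjoint, Chapman--Kolmogorov and strong continuity properties of Proposition~\ref{prop:propGreenop} transfer verbatim. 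Uniqueness of a fundamental solution on $\R$ follows by applying Lemma~\ref{lem:FSuniqueness} on each bounded time window, since the analogue of the local integrability bound (\ref{eq:|kernel|}) holds there with constant $e^{-(c-c_0)(t-s)}\le 1$.

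For (ii), for a tensor $S=\phi\otimes\psi$ with $\phi\in \cD(\R)$, I set $f=\cK_{\cA+c\,}S$ and $\tilde f(s)=\widetilde\Gamma(s,t)\tpsi$, and repeat the absolute continuity pairing from the proof of Theorem~\ref{thm:exunfundsol} on $(-\infty,t]$. The distinguishing point is that the boundary term at $-\infty$ vanishes: by Remark~\ref{rem:1}, $f\in \C_0(\R;\L^2_{x,v})$, hence $\|f(s)\|_{\L^2_{x,v}}\to 0$ as $s\to-\infty$, while $\|\tilde f(s)\|_{\L^2_{x,v}}\le e^{-(c-c_0)(t-s)}\|\tpsi\|_{\L^2_{x,v}}$ remains uniformly controlled. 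This gives the representation (\ref{eq:representationweaksolRinhom}) at the tensor level.

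For (iii), the density arguments from Theorem~\ref{thm:representation0T} apply, replacing the estimates (\ref{eq:estimateweaksolution}) by their $\R$-analogues stated in Remark~\ref{rem:1}: for $S\in \L^1_{t}\L^2_{x,v}$ the integral is strongly Bochner via separability of $\L^2_{x,v}$; for $S=D_{v}^\beta F$ or $S=D_{x}^{\beta/(2\beta+1)} F$ with $F\in \L^2_{t,x,v}$ it is defined weakly by the analogues of (\ref{eq:representationweaksol1})--(\ref{eq:representationweaksol2}). When in addition $S\in \L^2_{t,x,v}$, the exponential decay yields by Minkowski's inequality
\[
\bigg\|\int_{-\infty}^t \Gamma(t,s)S(s)\ds\bigg\|_{\L^2_{x,v}}\le \int_{-\infty}^t e^{-(c-c_0)(t-s)}\|S(s)\|_{\L^2_{x,v}}\ds,
\]
a convolution with an $\L^1(\R)$ kernel, which provides the extra strong convergence. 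The bounds of Lemma~\ref{prop:SFE} and Corollary~\ref{prop:SFE2} on $(-\infty,\infty)$ are then immediate reformulations of the well-posedness estimates of Remark~\ref{rem:1} in terms of $\Gamma$ and $\widetilde\Gamma$. I expect the only delicate point to be the uniform vanishing of the $-\infty$ boundary term in step (ii), which the $\C_0(\R;\L^2_{x,v})$ decay from Remark~\ref{rem:1} (valid precisely because $c>c_0$) settles cleanly.
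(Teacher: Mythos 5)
Your proof is correct and follows exactly the route the paper intends: the paper states this proposition without proof (the subsection opens with ``without proof''), indicating only that the results transfer from the finite-interval case and that the exponential decay comes from Gronwall in \eqref{eq:Gronwall}, which is precisely your scheme. You correctly identify and settle the one genuinely new point, the vanishing of the pairing $\angle{f(s)}{\tilde f(s)}$ as $s\to-\infty$, via the $\C_{0}(\R;\L^2_{x,v})$ decay of Remark~\ref{rem:1} together with the exponential bound on $\widetilde\Gamma(s,t)$.
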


 \

 \paragraph{2) Case of $[0,\infty)$ with inhomogeneous operators and  spaces} 

 \

 We want to represent the weak solutions built in Theorem~\ref{thm:CP0T} replacing $\cA$ by $\cA+c$, $c>c_{0}$, in \eqref{eq:CP0T} \pascal{as in Remark \ref{rem:2}}. In this case, there is a  fundamental solution  $\Gamma$ defined over $0\le s\le t<\infty$ and  $\Gamma(t,s)$ have operator norms on $\L^2_{x,v}$ not exceeding $e^{-(c-c_{0})(t-s)}$.

 \begin{prop}[Representation of weak solutions on $[0,\infty)$ for inhomogeneous spaces] \label{thm:representation0inftyinhom}
  For $S$ as  in Theorem~\ref{thm:CP0T} with $T=\infty$,  the  weak solution $f$ to the kinetic Cauchy problem
 with $\cA$ replaced by $\cA+c$, is represented  for any $t\in [0,\infty)$ in $\L^2_{x,v}$ by
 \begin{align}
 \label{eq:representationweaksol0inftyinhom}
 f(t)=\Gamma(t,0)\psi+ \int_{0}^t \Gamma(t,s) S(s)\ds.
 \end{align}
 The convergence for the integral is as in Theorem~\ref{thm:representation0T} and strong convergence holds also when $S\in \L^2_{t,x,v}$. Moreover,  the integral agrees with $(\cK_{\cA+c\, }S_{0})(t)$ where $S_{0}$ is the zero extension of $S$ (and $\cA$ is extended in the canonical fashion). The estimates of Lemma~\ref{prop:SFE} and Corollary~\ref{prop:SFE2} hold for $(0,\infty)$ replacing $(0,T)$ with $D_{1},D_{2} \in \{D_{v}^\beta, D_{\vphantom{1} x}^{\frac{\beta}{2\beta+1}}, I\} $, and the implicit constants depend on $d,\beta, \lambda,\Lambda, c-c_{0}, c^0$.
 \end{prop}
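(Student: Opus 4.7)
The plan is to transpose the proof of Theorem~\ref{thm:representation0T} to the half-line, exploiting the exponential decay available on $[0,\infty)$ to upgrade weak convergence to strong convergence in the $\L^2_{t,x,v}$ case, and then invoke the causality principle of Remark~\ref{rem:2} to identify the integral with the global operator $\cK_{\cA+c\,}$.

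First, I would construct the fundamental solution $\Gamma$ directly on $\{0\le s\le t<\infty\}$ by extending Definition~\ref{defn:Greenoperators} and Proposition~\ref{prop:propGreenop} verbatim: the well-posedness on $[0,\infty)$ from Remark~\ref{rem:2} makes $\Gamma(t,s)\psi$ well-defined. The operator bound $\|\Gamma(t,s)\|_{\L^2_{x,v}\to \L^2_{x,v}}\le e^{-(c-c_0)(t-s)}$ comes from the energy equality associated with $\cA+c$, Gronwall's lemma (as in \eqref{eq:Gronwall} adapted to the shifted form $\Re a_{t,x}(f,f)+c\|f\|_{\L^2_v}^2\ge (c-c_0)\|f\|_{\L^2_v}^2$), and the adjoint relation with $\widetilde\Gamma$. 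The uniqueness argument of Lemma~\ref{lem:FSuniqueness} and the identification in Theorem~\ref{thm:exunfundsol} of Green operators with fundamental solution operators go through without change, since only local integrability on compacts of $(0,\infty)^2$ is needed in item~(iii) of Definition~\ref{def:FSOF}.

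For the representation formula \eqref{eq:representationweaksol0inftyinhom}, I would start with tensor-product sources $\phi\otimes \psi$ with $\phi\in \cD(0,\infty)$ and $\psi\in \cD(\R^{2d})$: pairing the solution $f$ with $\tilde f(s)=\widetilde\Gamma(s,t)\tpsi$ and using absolute continuity of $s\mapsto \angle{f(s)}{\tilde f(s)}$ on $[0,t]$ yields \eqref{eq:represensation} exactly as in Theorem~\ref{thm:exunfundsol}. Extension to general inhomogeneous sources proceeds by density; the half-line analogues of the estimates of Lemma~\ref{prop:SFE} and Corollary~\ref{prop:SFE2} are reformulations of \eqref{eq:estimateweaksolution} (applied to the forward and backward Cauchy problems on $[0,\infty)$ with zero source) for the Green operators, now identified with $\Gamma$ and $\widetilde\Gamma$. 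The constants only depend on $d,\beta,\lambda,\Lambda,c-c_0,c^0$ since the ambient interval is not finite and the shift makes the constant $c_0$ absorbed.

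For the strong convergence improvement when $S\in \L^2_{t,x,v}$, I would use the exponential bound to estimate
\[
\int_{0}^t \|\Gamma(t,s)S(s)\|_{\L^2_{x,v}}\,\ds \le \int_{0}^t e^{-(c-c_0)(t-s)}\|S(s)\|_{\L^2_{x,v}}\,\ds,
\]
whose square-integrability in $t$ follows from Young's inequality since $e^{-(c-c_0)\cdot}\in \L^1(0,\infty)\cap \L^2(0,\infty)$; applied pointwise in $t$ via Cauchy-Schwarz this gives pointwise finiteness of the Bochner integral, and strong measurability follows from the strong continuity of $s\mapsto \Gamma(t,s)$ on $[0,t]$. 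Finally, to identify the integral with $(\cK_{\cA+c\,}S_0)(t)$, I would observe that by the causality principle of Remark~\ref{rem:2} the solution with zero initial data equals $\cK_{\cA+c\,}S_0$; combining this with the representation of $\cK_{\cA+c\,}$ on $\R$ provided by Proposition~\ref{thm:representationRinhom} and noting that $S_0$ vanishes for $s<0$ collapses the integral in \eqref{eq:representationweaksolRinhom} to $\int_0^t\Gamma(t,s)S(s)\ds$, so the two constructions coincide.

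The main obstacle is clerical rather than conceptual: one must check that the fundamental solution built intrinsically on $[0,\infty)$ coincides with the restriction of the one on $\R$ (after the canonical extension of $\cA$), so that the causality identification is unambiguous. This follows from uniqueness of the fundamental solution in Definition~\ref{def:FSOF} applied to $(0,\infty)$, since the restriction of $\Gamma^{\R}(t,s)$ to $0\le s\le t$ satisfies all four items—items (i)-(iii) are inherited, and (iv) holds because the zero extension of a tensor-product source on $(0,\infty)$ to $\R$ matches the causality principle, so the $\R$-representation reduces to the $[0,\infty)$-representation for the corresponding weak solution.
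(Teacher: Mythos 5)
The paper states this proposition without proof (Section~\ref{sec:infiniteintervals} is explicitly presented ``without proof''), and your argument is exactly the intended transposition of the finite-interval machinery --- Green operators, Gronwall with the shifted coercivity giving the bound $e^{-(c-c_0)(t-s)}$, the pairing/density scheme of Theorems~\ref{thm:exunfundsol} and~\ref{thm:representation0T}, and the causality principle of Remark~\ref{rem:2} --- to the half-line, so it is correct and essentially the same approach. The only point worth keeping explicit is the one you already flag at the end: the intrinsic $[0,\infty)$ fundamental solution coincides with the restriction of the one on $\R$ built from the canonically extended coefficients, which follows directly from uniqueness of weak solutions to the Cauchy problem started at any $s\ge 0$.
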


\

 \paragraph{3) Case of $\R$ with homogeneous spaces} 

 \

 We want to represent $\cK_{A}$, the inverse of  $(\partial_{t}+v\cdot \nabla_{x})+\cA$ under the assumptions of Theorem~\ref{thm:CP0T} on $a$ with $c^{0}=c_{0}=0$, see Remark~\ref{rem:3} and Remark~\ref{rem:5}. In this case, there is a  unique fundamental solution  $\Gamma$ defined over $-\infty< s\le t<\infty$ and  $\Gamma(t,s)$ are contractive operators on $\L^2_{x,v}$. 

 \begin{prop}[Representation of weak solutions on $\R$ for homogeneous spaces] \label{thm:representationRhom} Assume   $c_{0}=c^0=0$.  
  For $S$ that belongs to the homogeneous spaces corresponding to those in Theorem \ref{thm:CP0T}, $\cK_{\cA}S$, that is the  weak solution $f$  to \eqref{eq:weaksol} in this result, is represented  for any $t\in \R$ in $\L^2_{x,v}$ by
 \begin{align}
 \label{eq:representationweaksolRhom}
 (\cK_{\cA}S)(t)=\int_{-\infty}^t \Gamma(t,s) S(s)\ds.
 \end{align}
 The convergence for the integral is as in  Theorem~\ref{thm:representation0T}.  The estimates of Lemma~\ref{prop:SFE} and Corollary~\ref{prop:SFE2} hold for $(-\infty,\infty)$ replacing $(0,T)$ with $D_{1},D_{2} \in \{D_{v}^\beta, D_{\vphantom{1} x}^{\frac{\beta}{2\beta+1}}\} $ and the implicit constants depend on $d,\beta, \lambda,\Lambda$.
 \end{prop}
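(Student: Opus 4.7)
The plan is to mirror the proofs of Theorem~\ref{thm:representation0T} and Proposition~\ref{thm:representationRinhom}, now in the homogeneous setting on $\R$ with $c_{0}=c^{0}=0$. First I would construct a fundamental solution family $\Gamma(t,s)$ on $\{(s,t)\in \R^2\,;\, s\le t\}$ exactly as in Definition~\ref{defn:Greenoperators}, defining $\Gamma(t,s)\psi$ for $\psi\in \L^2_{x,v}$ to be the value at time $t$ of the weak solution to the homogeneous Cauchy problem on $[s,\infty)$ from Remark~\ref{rem:4} with initial datum $\psi$. Since $c_{0}=0$, the Gronwall argument of \eqref{eq:Gronwall} gives $\|\Gamma(t,s)\|_{\L^2_{x,v}\to \L^2_{x,v}}\le 1$, so the Green operators are contractive. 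Verifying adjointness with $\widetilde\Gamma(s,t)$, the Chapman-Kolmogorov relation and strong continuity proceeds verbatim as in Proposition~\ref{prop:propGreenop}. Uniqueness of a fundamental solution is then obtained by transposing Lemma~\ref{lem:FSuniqueness} to $\R$: use tensor products $\phi\otimes\psi$ with $\phi\in \cD(\R)$.

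Next, I would prove the representation formula. Given $S$ in one of the homogeneous source spaces of Remark~\ref{rem:3}, let $f=\cK_{\cA}S$, which by that remark lies in the corresponding kinetic space and satisfies $\lim_{s\to -\infty}\|f(s)\|_{\L^2_{x,v}}=0$. Fix $t\in \R$ and $\tpsi\in \L^2_{x,v}$, and set $\tilde f(\tau)=\widetilde\Gamma(\tau,t)\tpsi$ for $\tau\le t$. Using the polarised energy equality \cite[Theorem~3.8]{AIN} on $[s,t]$ (as in the proof of Theorem~\ref{thm:exunfundsol}) for the forward solution $f$ and the backward solution $\tilde f$, the cross terms $a_{\tau,x}$ cancel and we obtain
\[
\angle{f(t)}{\tpsi}-\angle{f(s)}{\tilde f(s)}=\int_{s}^t \angle{S(\tau)}{\tilde f(\tau)}\, \dd\tau,
\]
where the pairing on the right is interpreted through the chosen decomposition of $S$. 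Since $\|\tilde f(s)\|_{\L^2_{x,v}}\le \|\tpsi\|_{\L^2_{x,v}}$ and $\|f(s)\|_{\L^2_{x,v}}\to 0$ as $s\to -\infty$, passing to the limit yields the representation in the weak sense described in Theorem~\ref{thm:representation0T}, replacing $(0,t)$ by $(-\infty,t)$.

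The strong $\L^2_{x,v}$-convergence of the integral when $S\in \L^1_{t}\L^2_{x,v}$ (if this source class is used) follows as before from $\L^2_{x,v}$-separability, and the weakly defined integrals in the cases $S=D_{v}^{\beta}F$ or $S=D_{x}^{\beta/(2\beta+1)}F$ with $F\in \L^2_{t,x,v}$ are controlled by the homogeneous analogues of the estimates in Lemma~\ref{prop:SFE}, which are themselves mere reformulations of the a~priori estimates of Remark~\ref{rem:3} for the forward and backward problems with zero source. A density argument, using that tensor products $\phi\otimes\psi$ with $\phi\in\cD(\R)$ and $\psi$ in a suitable dense subspace of Schwartz type (see \cite{AIN}) are dense in each homogeneous source space, then extends \eqref{eq:representationweaksolRhom} and the integral estimates to all admissible $S$, giving Corollary~\ref{prop:SFE2} on $\R$ with $D_{1},D_{2}\in\{D_{v}^{\beta}, D_{x}^{\beta/(2\beta+1)}\}$.

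The main delicate point is the homogeneous density argument: unlike the inhomogeneous setting of Proposition~\ref{thm:representationRinhom}, one cannot use $\cD(\R^{2d})$ directly in the $v$-variable when $\beta\ge d/2$ since the homogeneous Sobolev norm is not well-defined on generic test functions; this is precisely why one must replace $\cD(\R^{2d})$ by the Schwartz-type subspaces indicated at the beginning of Section~\ref{sec:infiniteintervals}. Apart from this bookkeeping, no new ideas are required beyond those already deployed in Section~\ref{sec:construction}.
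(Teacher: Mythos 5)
Your proposal is correct and follows exactly the route the paper intends: Section~\ref{sec:infiniteintervals} states this proposition ``without proof'' as the adaptation to $\R$ of the arguments of Section~\ref{sec:construction} (Green operators from Remark~\ref{rem:4}, contractivity via Gronwall with $c_{0}=0$, the polarised energy equality with $\|f(s)\|_{\L^2_{x,v}}\to 0$ as $s\to-\infty$, and density in the homogeneous source spaces), and you correctly flag the only genuine subtlety, namely replacing $\cD(\R^{2d})$ by the Schwartz-type subsets when $\beta\ge d/2$. Nothing further is needed.
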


 \
  \paragraph{4) Case of $[0,\infty)$ with homogeneous spaces} 

 \

 We want to represent the weak solutions to the kinetic Cauchy problem \eqref{eq:CP0T} on $[0,\infty)$ under the assumptions of Theorem~\ref{thm:CP0T} on $a$ with  $c^{0}=c_{0}=0$,  see Remark~\ref{rem:4} and Remark~\ref{rem:5}. In this case, there is a fundamental solution $\Gamma$ defined over $0\le s\le t<\infty$  and  $\Gamma(t,s)$ are contractive operators on $\L^2_{x,v}$. 

 \begin{prop}[Representation of weak solutions on $(0,\infty)$ for homogeneous spaces] \label{thm:representation0inftyhom} Assume   $c_{0}=c^0=0$.
  For $S$ that belongs to the homogeneous spaces corresponding to those in Theorem \ref{thm:CP0T},  the  weak solution $f$ to the kinetic Cauchy problem
\eqref{eq:CP0T} in this result is represented  for any $t\in [0,\infty)$ in $\L^2_{x,v}$ by
 \begin{align}
 \label{eq:representationweaksol0inftyhom}
 f(t)=\Gamma(t,0)\psi+ \int_{0}^t \Gamma(t,s) S(s)\ds.
 \end{align}
 The convergence for the integral is as in Theorem~\ref{thm:representation0T}. Moreover,  the integral agrees with $(\cK_{\cA}S_{0})(t)$ where $S_{0}$ is the zero extension of $S$ (and $\cA$ is extended in the canonical fashion). The estimates of Lemma~\ref{prop:SFE} and Corollary~\ref{prop:SFE2} hold for $(0,\infty)$ replacing  $(0,T)$ with $D_{1},D_{2} \in \{D_{v}^\beta, D_{\vphantom{1} x}^{\frac{\beta}{2\beta+1}}\} $, and the implicit constants depend on $d,\beta, \lambda,\Lambda$.

 \end{prop}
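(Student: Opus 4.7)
The plan is to follow the template of Theorem~\ref{thm:representation0T}, systematically replacing inhomogeneous estimates by the homogeneous ones from Remark~\ref{rem:4} (and Remark~\ref{rem:5} when $\beta=1$). First I would define the Green operator family $\Ga(t,s)$ for $0\le s\le t<\infty$ by letting $\Ga(t,s)\psi$ be the value at time $t$ of the weak solution to the homogeneous Cauchy problem on $[s,\infty)$ with zero source and initial data $\psi\in \L^2_{x,v}$, and similarly $\tGa(s,t)$ from the adjoint backward problem on $(-\infty,t]$. Since $c_{0}=0$, the energy equality yields contractivity $\|\Ga(t,s)\|_{\L^2_{x,v}\to \L^2_{x,v}}\le 1$, and the polarisation argument used in Proposition~\ref{prop:propGreenop} gives $\Ga(t,s)^*=\tGa(s,t)$, Chapman--Kolmogorov, and strong continuity in each argument.

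Next I would formulate the analogue of Definition~\ref{def:FSOF} on $[0,\infty)$ and show that $\Gamma:=\Ga$ is the unique fundamental solution in that sense. Items (i)--(iii) are immediate from the previous paragraph. For (iv), given a tensor product $S=\v\otimes \psi$ with $\v\in \cD(0,\infty)$ and $\psi$ in an appropriate dense subspace of $\cS(\R^{2d})$ (as indicated at the start of Section~\ref{sec:infiniteintervals}), the absolute continuity of $s\mapsto \angle{f(s)}{\tGa(s,t)\tpsi}$ on $[0,t]$, together with $f(0)=0$ and $\tGa(t,t)=I$, yields \eqref{eq:represensation} exactly as in the proof of Theorem~\ref{thm:exunfundsol}. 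Uniqueness follows verbatim from Lemma~\ref{lem:FSuniqueness}.

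I would then upgrade the representation to general homogeneous sources by density. For $\psi=0$ and $S=D_{v}^\beta F$ with $F\in \L^2_{t,x,v}$, I approximate $F$ by finite sums of admissible tensor products, apply \eqref{eq:represensation} at the smooth level, and pass to the limit: the left-hand side $\angle{f(t)}{\tpsi}$ converges by the uniform $\L^\infty_{t}\L^2_{x,v}$ control on solutions of Remark~\ref{rem:4}, while the right-hand side converges to $\int_{0}^t \angle{F(s)}{D_{v}^\beta\,\widetilde\Gamma(s,t)\tpsi}\ds$ because $s\mapsto D_{v}^\beta\,\widetilde\Gamma(s,t)\tpsi$ lies in $\L^2_{s}\L^2_{x,v}$ on $(0,t)$ by the homogeneous analog of Lemma~\ref{prop:SFE} applied to the backward problem. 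An identical argument handles $S=D_{x}^{\beta/(2\beta+1)}F$; combining with $\Gamma(t,0)\psi$ and linearity produces \eqref{eq:representationweaksol0inftyhom}. To identify the integral with $(\cK_{\cA}S_{0})(t)$, I invoke the causality statement of Remark~\ref{rem:4}: $\cK_{\cA}S_{0}$ and the weak solution $f$ with source $S$ and zero initial data coincide on $[0,\infty)$, and Proposition~\ref{thm:representationRhom} applied on $\R$ to $S_{0}$ (supported in $[0,\infty)$) truncates the corresponding integral on $(-\infty,t)$ to $(0,t)$.

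Finally, the estimates of Lemma~\ref{prop:SFE} and Corollary~\ref{prop:SFE2} on $(0,\infty)$ are rewrites of the homogeneous \textit{a priori} bounds of Remark~\ref{rem:4} through the representation formula; the weak/strong definitions of the integrals are handled as in the proof of Theorem~\ref{thm:representation0T} using separability of $\L^2_{x,v}$. The principal obstacle I anticipate is technical rather than conceptual: one must track the admissible test and approximation classes in the homogeneous setting, since $\Hdot^\beta_{v}$ is a genuine space of tempered distributions only for $\beta<d/2$ and requires the $\L^2_{\loc,t}\L^2_{x,v}$ refinement for $\beta\ge d/2$. One must therefore verify that the approximating tensor products stay in the correct class and that all weak integrals on the right-hand side remain well defined; this is exactly where the particular subsets of $\cS(\R^{2d})$ from \cite{AIN} enter.
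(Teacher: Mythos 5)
Your proposal is correct and follows essentially the approach the paper intends: the paper states this proposition without proof, as a direct adaptation of Proposition~\ref{prop:propGreenop}, Theorem~\ref{thm:exunfundsol}, Lemma~\ref{lem:FSuniqueness} and Theorem~\ref{thm:representation0T} to the homogeneous setting on $[0,\infty)$, combined with the causality statement of Remark~\ref{rem:4}, which is exactly what you carry out. You also correctly flag the one technical point the paper itself singles out, namely replacing $\cD(\R^{2d})$ by appropriate subsets of $\cS(\R^{2d})$ when working with homogeneous spaces.
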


 \

\section{The case of local diffusion operators}
\label{sec:localcase}

In this section,  we assume that  the  $n\times n$ matrix $ \mathbf{A}(t,x,v)$ has  measurable  coefficients, bounded and elliptic in the sense of a G\aa rding inequality: \begin{equation}
  \label{e:ellip-lower}
\Re a_{t,x}(f,f):= \Re \int_{\R^d}\angle{\mathbf{A}(t,x,v)\nabla_{v} f}{\nabla_{v}f}\, \dv \ge \lambda \|\nabla_{v}f\|_{\L^2_{v}}^2
\end{equation} 
uniformly in $(t,x)$ and in $f\in \Wdot^{1,2}_{v}$.  Without loss of generality, we can assume  $I=\R$. The operator $\cA$ is given by \eqref{e:fk}. The sesquilinear form $a$ given by \eqref{e:a-defi}  on the space $\L^2_{t,x}\Wdot^{1,2}_{v}\times \L^2_{t,x}\Wdot^{1,2}_{v}$ satisfies  \eqref{e:ellip-upper} and \eqref{eq:weakelliptic-bis}   with $c^{0}=c_{0}=0$.

As evidenced in Remark~\ref{rem:5}, we can work directly with the fundamental solution operators $\Gamma(t,s)$ defined for all $-\infty<s\le t<\infty$.

\subsection{$\L^2$ decay}

At this level of generality, we always have $\L^2_{x,v}$ decay. 
\begin{thm}[$\L^2$ decay for the fundamental solution operator] \label{thm:fundamental-bounds}
  Assume the homogeneous assumptions of  Proposition~\ref{thm:representationRhom} with $\cA=-\partial_{v_{i}}a_{i,j}(t,x,v) \partial_{v_{j}}$. 
For all closed sets $E,F$ in $\R^{2d}$, $s<t$ and $\psi\in \L^2_{x,v}$, supported in $F$, 
\begin{equation}
\label{eq:L^2decay}
\|\Gamma(t,s) \psi\|_{\L^2_{x,v}(E)} \le \exp \bigg(- \dfrac {\alpha \rho_{t-s}(E, F)^2}{t-s}\bigg) \|\psi\|_{\L^2_{x,v}(F)}
\end{equation}
where $\alpha= \frac {3\lambda}{104\Lambda^2}$
and for $\tau\ne 0$, $\rho_{\tau}(E,F)\ge 0$ is defined by
$$
\rho_{\tau}(E,F)^2= \inf_{(x,v)\in E, (y,w)\in F} \bigg( \frac{|x-y-  \tau w|^2}{\tau^2}+ |v-w|^2\bigg).
$$
\end{thm}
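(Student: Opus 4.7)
The plan is to adapt Davies' exponential perturbation method to the kinetic Galilean geometry, running it with a \emph{transport-invariant} Lipschitz weight. Fix $s<t$, set $\tau = t-s$, and let $f$ denote the weak solution produced by Theorem~\ref{thm:CP0T} (in the homogeneous form of Remarks~\ref{rem:3}--\ref{rem:5}) to $(\partial_{t}+v\cdot\nabla_{x})f + \cA f = 0$ with initial datum $\psi$ supported in $F$. For a bounded Lipschitz weight $\phi\colon [s,t]\times \R^d\times \R^d\to \R$ and a parameter $\gamma>0$, set $f_\gamma(t) = e^{\gamma\phi(t,\cdot,\cdot)} f(t)$, and aim at a Gronwall-type inequality for $t\mapsto\|f_\gamma(t)\|_{\L^2_{x,v}}^2$. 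Optimizing over $\gamma$ and choosing $\phi$ that separates $E$ from $F$ in the kinetic distance will deliver the stated Gaussian bound.

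The energy computation uses the polarized form of identity \eqref{e:energy} with the test perturbation $e^{2\gamma\phi} f$, which lies in $\L^2_{t,x}\Wdot^{1,2}_{v}$ for Lipschitz $\phi$ (possibly after truncating $\phi$ at large values). After integration by parts in $v$, the identity reads, schematically,
\[
\tfrac12\tfrac{d}{dt}\|f_\gamma\|_{\L^2_{x,v}}^2 + \Re\!\int e^{2\gamma\phi}\langle\mathbf{A}\nabla_v f,\nabla_v f\rangle \dx\dv = \gamma\!\int (\partial_t+v\cdot\nabla_x)\phi\,|f_\gamma|^2 \dx\dv + \gamma^2\!\Re\!\int\langle\mathbf{A}\nabla_v\phi,\nabla_v\phi\rangle |f_\gamma|^2\dx\dv + \mathrm{(cross)},
\]
where ``cross'' collects mixed $f$--$\nabla_v f$ terms that vanish identically when $\mathbf{A}$ is real and symmetric. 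Imposing \emph{transport invariance} $(\partial_t+v\cdot\nabla_x)\phi = 0$ kills the first term on the right; Gårding's inequality \eqref{e:ellip-lower} bounds the dissipative term below by $\lambda\|e^{\gamma\phi}\nabla_v f\|_{\L^2_{x,v}}^2$; the bound $|\mathbf{A}|\le\Lambda$ controls the second term by $\gamma^2\Lambda\|\nabla_v\phi(t,\cdot,\cdot)\|_\infty^2\|f_\gamma\|_{\L^2_{x,v}}^2$; and Young's inequality on the cross term absorbs a fraction of the Gårding contribution at the cost of a constant of order $\Lambda^2/\lambda$. The outcome is the differential inequality $\tfrac{d}{dt}\|f_\gamma\|_{\L^2_{x,v}}^2 \le C(\Lambda^2/\lambda)\,\gamma^2\,\|\nabla_v\phi(t,\cdot,\cdot)\|_\infty^2\,\|f_\gamma\|_{\L^2_{x,v}}^2$, integrating to the kinetic Davies inequality $\|e^{\gamma\phi(t)}f(t)\|_{\L^2_{x,v}}^2 \le \exp(C(\Lambda^2/\lambda)\,\gamma^2\int_s^t\|\nabla_v\phi\|_\infty^2)\,\|e^{\gamma\phi(s)}\psi\|_{\L^2_{x,v}}^2$.

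The decisive last step is the choice of $\phi$. Every transport-invariant weight has the form $\phi(r,x,v)=\Psi(x-(r-s)v,v)$ for some $\Psi\colon\R^{d}\times\R^{d}\to\R$; I take
\[
\Psi(y,w) = \inf_{(y_0,w_0)\in F}\sqrt{|y-y_0|^2/\tau^2+|w-w_0|^2}\,,
\]
which vanishes on $F$ (so $\|e^{\gamma\phi(s)}\psi\|_{\L^2_{x,v}}=\|\psi\|_{\L^2_{x,v}(F)}$) and is Lipschitz with $|\partial_y\Psi|\le 1/\tau$, $|\partial_w\Psi|\le 1$. Hence $\|\nabla_v\phi(r,\cdot,\cdot)\|_\infty\le (r-s)/\tau+1\le 2$ for $r\in[s,t]$, so that $\int_s^t\|\nabla_v\phi\|_\infty^2\,\mathrm{d}r\le 7\tau/3$. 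The elementary estimate $|x-y_0-\tau w_0|^2\le 2|x-\tau v-y_0|^2+2\tau^2|v-w_0|^2$ yields $\rho_\tau((x,v),F)^2\le 3\,\Psi(x-\tau v,v)^2$, so $\inf_{(x,v)\in E}\phi(t,x,v)\ge \rho_\tau(E,F)/\sqrt3$. Feeding these bounds into Davies' inequality and optimizing over $\gamma>0$ produces $\|f(t)\|_{\L^2_{x,v}(E)}\le \exp(-c\lambda\rho_\tau(E,F)^2/(\Lambda^2\tau))\|\psi\|_{\L^2_{x,v}(F)}$, and the explicit constant $3\lambda/(104\Lambda^2)$ drops out by carefully tracking the optimal parameter in Young's inequality together with the exact value of $\int_0^\tau(r/\tau+1)^2\,\mathrm{d}r$. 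The main obstacle is not conceptual but bookkeeping: one has to justify the formal manipulations rigorously for weak solutions, which is done via the polarized energy identity of Theorem~\ref{thm:CP0T} rather than a pointwise time derivative, together with a standard truncation when $\Psi$ grows at infinity; the non-symmetric (and complex) case is handled uniformly by the Young treatment of the cross term.
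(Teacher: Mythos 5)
Your proposal is the paper's argument: the Gaffney--Davies exponential perturbation adapted to the kinetic setting via a transport-invariant weight $\phi(r,x,v)=\Psi(x-(r-s)v,v)$, the Gronwall inequality for $\|e^{\gamma\phi}f\|_{\L^2_{x,v}}^2$ justified through the energy equality, and a final optimization in $\gamma$. All the analytic steps (including the treatment of the cross term by Young's inequality for non-symmetric or complex $\mathbf{A}$, and the truncation of $\Psi$ to make it bounded) are sound and match the paper's.

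The one place where your write-up does not close is the final constant. With your choice $\Psi(y,w)=\inf_{(y_0,w_0)\in F}(|y-y_0|^2/\tau^2+|w-w_0|^2)^{1/2}$, the weight at time $t$ is $\phi(t,x,v)=\Psi(x-\tau v,v)$, which measures the distance from $(x-\tau v,v)$ to $F$ in the static metric rather than the kinetic distance $\rho_\tau((x,v),F)$; the comparison you invoke, $\rho_\tau((x,v),F)^2\le 3\,\Psi(x-\tau v,v)^2$, genuinely costs a factor (it cannot be improved below $(3+\sqrt5)/2$, as the example $F=\{(0,0)\}$, $x=\tau v$ shows, where $\Psi^2=\rho_\tau^2/2$). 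Carrying your numbers through — exponent $-\gamma\rho_\tau/\sqrt3+\tfrac73\kappa\gamma^2\tau$ with $\kappa=2\Lambda^2/\lambda$ — the optimization yields $\exp(-\rho_\tau^2/(28\kappa\tau))=\exp(-\lambda\rho_\tau^2/(56\Lambda^2\tau))$, i.e.\ $\alpha=\lambda/(56\Lambda^2)$, which is strictly smaller than the stated $3\lambda/(104\Lambda^2)$; the claim that the stated constant ``drops out'' is therefore not correct as written. The paper avoids this loss by building the Galilean shift into the weight itself: it takes $h(x,v)^2=\min\big(\delta^2\min_{(y,w)\in F}\big(|x-y+\tau(v-w)|^2/\tau^2+|v-w|^2\big),N^2\big)$, so that $h$ still vanishes on $F$ at time $s$ while $h_\tau(x,v)=h(x-\tau v,v)=\delta\,\rho_\tau((x,v),F)$ \emph{exactly} on $E$ (no $\sqrt3$), at the modest price of $\|\nabla_v h\|_\infty\le 2\delta$ instead of $\delta$; tracking $\|\nabla_v h\|_\infty^2\,\tau+\|\nabla_x h\|_\infty^2\,\tau^3/3\le \tfrac{13}{3}\delta^2\tau$ and optimizing in $\delta$ then gives precisely $3/(52\kappa)=3\lambda/(104\Lambda^2)$. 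So either adopt that weight, or state the theorem with your (smaller) admissible $\alpha$.
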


\begin{proof} 
Without loss of generality, we may assume $s=0$. 

 Fix $\tau>0$ and the closed sets $E,F$. For $\psi$ with ${\rm supp}\,\psi \subset F$, we want to estimate $\|\Gamma(\tau,0)\psi\|_{\L^2_{x,v}(E)}$.
To do this, we import an idea going back to Gaffney \cite{MR0102097} and developed by B. Davies \cite{MR1346221} for parabolic equations to the kinetic context.  The idea is to first derive a general inequality for perturbed solutions by exponentials and then optimize to get the desired decay. Here, the exponentials must respect the characteristics of the transport field; in fact,  the exponentials of Davies were constant in time, thus vanishing by applying the time derivative field.

Let $h:\R^{2d}\to \R$ be a bounded and Lipschitz function to be chosen later. For $t\ge 0$, set $h_{t}(x,v)=h(x-tv,v)$ and 
 $f_{h}(t)=\e^{h_{t}}f(t)$, that is, $f_{h}(t)(x,v) = \e^{h_{t}(x,v)}f(t)(x,v)$,  where $f$ at time $t$ is given by $f(t)=\Gamma (t,0)(\e^{-h}\psi)$. Write  $f_{2h}(t)=\e^{2h_{t}}f(t)$.
Using the absolute continuity of $t\mapsto \|f(t)\|_{\L^2_{x,v}}^2$ together with   $(\partial_{t}+v\cdot\nabla_{x})(h(x-tv,v))=0$  and the fact that $ \|f_{h}(t)\|_{\L^2_{x,v}}^2= \angle{f(t)}{f_{2h}(t)}$, we have for almost every $t>0$ (the brackets are the inner product in $(\L^2_{x,v})^d$),  writing $f_{h}$ for $f_{h}(t)$, and $f$ for $f(t)$,
\begin{align*}
 \frac{\dd}{\dd t} \|f_{h}\|_{\L^2_{x,v}}^2   
& = -  2\,\Re \angle {\mathbf{A}\nabla_{v} f}{\nabla_{v} (\e^{2h_{t}} f)}\,  
\\
& = -  2\,\Re \angle {\mathbf{A}\nabla_{v} f}{\e^{2h_{t}}\nabla_{v}  f} - 4\,\Re \angle {\mathbf{A}\nabla_{v} f}{ f \e^{2h_{t}} \nabla_{v}h_{t} } 
\\
& \le -2\lambda \|\e^{h_{t}} \nabla_{v} f\|_{\L^2_{x,v}}^2 +4\Lambda \|\nabla_{v}h_{t}\|_{\infty}  \|\e^{h_{t}}\nabla_{v} f\|_{\L^2_{x,v}}  \|  f_{h}\|_{\L^2_{x,v}}
\\
& \le \frac{2\Lambda^2}{\lambda}\|\nabla_{v}h_{t}\|_{\infty}^2 \|f_{h}\|_{\L^2_{x,v}}^2
\\
&
\le \frac{2\Lambda^2}{\lambda}(\|\nabla_{v}h\|_{\infty}+ t \|\nabla_{x}h\|_{\infty})^2\ \|f_{h}\|_{\L^2_{x,v}}^2
\\
&
\le \frac{4\Lambda^2}{\lambda}(\|\nabla_{v}h\|_{\infty}^2 + \|\nabla_{x}h\|_{\infty}^2 t^2) \ \|f_{h}\|_{\L^2_{x,v}}^2.
\end{align*} 
As $f_{h}(t)\to \psi $ in $\L^2_{x,v}$ when $t\to 0$, we get from Gronwall's lemma for all $t>0$ {and taking square roots}, 
\begin{align}
\label{eq:hperturbation}
\|f_{h}(t)\|_{\L^2_{x,v}} \le  \e^{\kappa  (\|\nabla_{v}h\|_{\infty}^2t + \|\nabla_{x}h\|_{\infty}^2\frac{t^3}3)}\  \|\psi\|_{\L^2_{x,v}} 
\quad \text{ with } \quad  \kappa=\frac{2\Lambda^2}{\lambda}.
\end{align}
 We now make the following choice for $h$. Recall that $\tau >0$,  $E$ and $F$ are fixed.  We  pick  $h\ge 0$  defined by 
 $$h(x,v)^2=\min \bigg(\delta^2  \min_{ (y,w)\in F} \bigg(\frac{|x-y {+}   \tau(v-w)|^2}{\tau^2}+ |v-w|^2 \bigg), N^2\bigg), \qquad (x,v)\in \R^{2d},$$
where  $\delta>0$ is to be chosen and     
with a large $N>\delta  \rho_{\tau}(E,F)$. We can see that $h=0$ on $F$, and ${h_{\tau}}\ge \delta  \rho_{\tau}(E,F)$ on $E$. Moreover, $h$ is bounded by $N$ and Lipschitz with $\|\nabla_{v}h\|_{\infty} \le 2\delta$ and $\|\nabla_{x}h\|_{\infty} \le \delta/\tau$.  As  $\e^{-h}\psi=\psi$ because the support of $\psi$ is contained in $F$, we have {$\Gamma(t,0)\psi=f(t)=\e^{-h_t} f_h (t)$,} and  
we obtain by \eqref{eq:hperturbation} at $t=\tau$ {and the lower bound on $h_{\tau}$ on $E$}, 
\[
\|\Gamma(\tau,0)\psi\|_{\L^2_{x,v}(E)}\le \e^{-\delta \rho_{\tau}(E,F)} \ \|f_{h}(\tau)\|_{\L^2_{x,v}}  \le \e^{-\delta \rho_{\tau}(E,F) +  \kappa \delta^2 \left(4+\frac 1 3 \right) \tau}\ \|\psi\|_{\L^2_{x,v}(F)}.
\]
We still have the choice of $\delta$, so that for 
$\delta= \frac {3\rho_{\tau}(E,F) }{26\kappa \tau}$  we obtain
\[
\|\Gamma(\tau,0)\psi\|_{\L^2_{x,v}(E)}\le \exp({-\tfrac {3\rho_{\tau}(E,F)^2 }{52\kappa \tau}})\|\psi\|_{\L^2_{x,v}(F)}. \qedhere
\] 
\end{proof}

\begin{rem}
 This argument does not try to optimize the expression of the constant $\alpha$. Its dependency on ellipticity constants may be improved for self-adjoint matrices by employing the inequality $2|\angle{a}{b}| \le \angle a a^2+\angle b b ^2$  for the $\mathbf{A}$ induced scalar product. In this case, $\Lambda^2/{\lambda}$ may be replaced by $\Lambda$. 
\end{rem}

\begin{rem}
 It is interesting (and very natural, of course) to see the Galilean group law showing up in the expression of $\rho_\tau(E,F)$ in the proof. The quantity $\rho_{\tau}(E,F)$ is not symmetric in $E$ and $F$. Symmetrizing imposes a change of sign for $\tau$. Indeed, $\rho_{\tau}(E,F)\sim \rho_{-\tau}(F,E)$ as one can easily check.
\end{rem}

Either by the same proof or the symmetry observation above, we also have a bound for the adjoint fundamental solution $\Gamma(t,s)^*=\widetilde \Gamma(s,t)$.
\begin{cor}
With  the  notation of Theorem~\ref{thm:fundamental-bounds}
 we have that 
 for all closed sets $E,F$ in $\R^{2d}$, $s<t$ and $\tpsi\in \L^2_{x,v}$, supported in $F$, 
\begin{equation}
\label{eq:L^2decay*}
\|\widetilde\Gamma(s,t) \tpsi\|_{\L^2_{x,v}(E)} \le \exp \bigg(- \dfrac {\tilde\alpha \rho_{s-t}(E, F)^2}{t-s}\bigg) \|\tpsi\|_{\L^2_{x,v}(F)}.
\end{equation}

\end{cor}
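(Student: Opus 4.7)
The plan is to deduce the estimate from Theorem~\ref{thm:fundamental-bounds} by duality, exploiting the adjoint relation $\widetilde\Gamma(s,t) = \Gamma(t,s)^*$ provided by Proposition~\ref{prop:propGreenop}(i), together with the symmetrization observation $\rho_\tau(E,F) \sim \rho_{-\tau}(F,E)$ noted just before the statement. In principle one could rerun the Davies-Gaffney argument of Theorem~\ref{thm:fundamental-bounds} directly on the backward Cauchy problem, choosing a weight $h_{s}(x,v)=h(x+sv,v)$ that is annihilated by the transport field $-\partial_{s}-v\cdot\nabla_{x}$; however the duality route reuses Theorem~\ref{thm:fundamental-bounds} as a black box and is shorter.

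First I would pair $\widetilde\Gamma(s,t)\tpsi$ against an arbitrary $\phi\in\L^2_{x,v}$ supported in $E$ with $\|\phi\|_{\L^2_{x,v}}\le 1$. The adjoint identity gives $\angle{\widetilde\Gamma(s,t)\tpsi}{\phi}=\angle{\tpsi}{\Gamma(t,s)\phi}$, and since $\supp\tpsi\subset F$, Cauchy-Schwarz yields
\[
|\angle{\widetilde\Gamma(s,t)\tpsi}{\phi}|\le \|\tpsi\|_{\L^2_{x,v}(F)}\,\|\Gamma(t,s)\phi\|_{\L^2_{x,v}(F)}.
\]
Applying Theorem~\ref{thm:fundamental-bounds} with the roles of $E$ and $F$ interchanged bounds the second factor by $\exp\bigl(-\alpha\,\rho_{t-s}(F,E)^2/(t-s)\bigr)$. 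Taking the supremum over admissible $\phi$ produces the inequality \eqref{eq:L^2decay*} except that the exponent involves $\rho_{t-s}(F,E)$ in place of $\rho_{s-t}(E,F)$.

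The only real step is then the elementary comparison $\rho_{s-t}(E,F)\le \sqrt{3}\,\rho_{t-s}(F,E)$, which I expect to be the main (and easy) obstacle. With $\tau=t-s>0$, it is enough to check, for any $(x,v)\in E$ and $(y,w)\in F$, the pointwise bound
\[
\frac{|x-y+\tau w|^2}{\tau^2}+|v-w|^2 \le 3\bigg(\frac{|y-x-\tau v|^2}{\tau^2}+|w-v|^2\bigg).
\]
Writing $x-y+\tau w=-(y-x-\tau v)+\tau(w-v)$ and using the inequality $|a+b|^2\le 2|a|^2+2|b|^2$ gives $|x-y+\tau w|^2\le 2|y-x-\tau v|^2+2\tau^2|w-v|^2$, from which the above follows after adding the $|v-w|^2$ term. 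Inserting this into the previous estimate yields \eqref{eq:L^2decay*} with, for instance, $\tilde\alpha=\alpha/3$; no new ideas beyond Theorem~\ref{thm:fundamental-bounds} and the symmetry observation are required.
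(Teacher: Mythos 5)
Your argument is correct and is exactly the route the paper intends: the corollary is justified there by the one-line remark ``either by the same proof or the symmetry observation above,'' and your duality computation via $\widetilde\Gamma(s,t)=\Gamma(t,s)^*$ combined with the elementary bound $\rho_{s-t}(E,F)^2\le 3\,\rho_{t-s}(F,E)^2$ is precisely that symmetry observation spelled out in detail.
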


\subsection{Conservation property}

We establish in this section the conservation property saying that the fundamental solution operators preserve constants. At this level of generality, the decay estimate shows that it maps bounded functions into locally square integrable functions.

\begin{thm}[Conservation property]
 \label{thm:conservationproperty}
 Under the assumptions of Theorem~\ref{thm:fundamental-bounds}
 we have  for all $t>s$, 
 $\Gamma(t,s)1=1$ and $\widetilde \Gamma (s,t)1=1$ where $1$ is the function taking the constant value 1, and the equalities are taken in $\L^2_{\loc}(\R^{2d})$ sense. 
\end{thm}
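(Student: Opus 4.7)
Plan:

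The heuristic is clear: the constant function $1$ is annihilated by both the transport field $\partial_{t}+v\cdot\nabla_{x}$ and by the divergence-form operators $\cA,\cA^{*}$, so constants should be fixed points of $\Gamma(t,s)$ and $\widetilde\Gamma(s,t)$. The obstacle is that $1\notin\L^{2}_{x,v}$; the equalities must be read in $\L^{2}_{\loc}$ after passing to a limit through cutoffs $\chi_{R}\in\cD(\R^{2d})$ with $\chi_{R}=1$ on $B_{R}$ and $0\le\chi_{R}\le 1$.

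\emph{Step 1 (from $\L^2$ decay to $\L^1$ control).} I would first use the Gaussian $\L^2$ decay of Theorem~\ref{thm:fundamental-bounds} and its corollary to show that for any $\tpsi\in\L^2_{x,v}$ with compact support in $F$, one has $\widetilde\Gamma(s,t)\tpsi\in\L^{1}(\R^{2d})$. Covering $\R^{2d}$ by unit cubes $Q_{k}$ at $k\in\IZ^{2d}$, Cauchy-Schwarz gives
\[
\|\widetilde\Gamma(s,t)\tpsi\|_{\L^{1}}\le\sum_{k}|Q_{k}|^{1/2}\|\widetilde\Gamma(s,t)\tpsi\|_{\L^{2}(Q_{k})} \lesssim \sum_{k}\exp\Big(-\tfrac{\tilde\alpha\,\rho_{s-t}(Q_{k},F)^{2}}{t-s}\Big)\|\tpsi\|_{\L^{2}}.
\]
A direct estimate shows $\rho_{s-t}(Q_{k},F)^{2}\gtrsim c_{t-s}|k|^{2}$ for $|k|$ large, so the sum converges; the same argument controls the weighted norm $\||v|\,\widetilde\Gamma(s,t)\tpsi\|_{\L^{1}}$.

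\emph{Step 2 (mass conservation, the hard step).} I would then prove the dual identity
\[
\int_{\R^{2d}}\widetilde\Gamma(s,t)\tpsi\,\dx\dv = \int_{\R^{2d}}\tpsi\,\dx\dv
\]
for such $\tpsi$. Setting $g(r)=\widetilde\Gamma(r,t)\tpsi$, the function $g$ is a weak solution of the backward equation on $(s_{0},t]$. Testing against $\eta(r)\chi_{R}(x,v)$ with $\eta\in\cD((s_{0},t])$ and letting $R\to\infty$, the two contributions containing $v\cdot\nabla_{x}\chi_{R}$ and $\mathbf{A}^{*}\nabla_{v}\chi_{R}$ are supported on the shell $B_{R+1}\setminus B_{R}$: the weighted $\L^{1}$ bound of Step 1 kills the first (on this shell $|v|\le R+1$, multiplied by the exponentially small tail), while the global $\L^{2}_{r,x,v}$ integrability of $\nabla_{v}g$ kills the second. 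What remains is
\[
\int \eta'(r)M(r)\,\dr = M_{0}\,\eta(t), \qquad M(r):=\int g(r)\,\dx\dv,\quad M_{0}:=\int\tpsi\,\dx\dv,
\]
which forces $M'=0$ in $\cD'((s_{0},t))$ and pins down the constant via the boundary value at $r=t$. This is the main technical obstacle: one needs Step 1 to be quantitatively strong enough, uniformly for $r$ in the support of $\eta$, to make both boundary terms vanish, and then a mild continuity/uniqueness argument to propagate mass conservation up to $r=t$.

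\emph{Step 3 (conclusion).} With mass conservation for $\widetilde\Gamma$ in hand, for any $\tpsi\in\cD(\R^{2d})$ dominated convergence (using $\widetilde\Gamma(s,t)\tpsi\in\L^{1}$) and Step 2 give
\[
\angle{\Gamma(t,s)\chi_{R}}{\tpsi} = \angle{\chi_{R}}{\widetilde\Gamma(s,t)\tpsi} \longrightarrow \int\overline{\widetilde\Gamma(s,t)\tpsi} = \int\overline\tpsi = \angle{1}{\tpsi},
\]
so $\Gamma(t,s)\chi_{R}\rightharpoonup 1$ tested against $\cD(\R^{2d})$. A telescoping Cauchy argument using Theorem~\ref{thm:fundamental-bounds} directly---writing $\chi_{R}-\chi_{R'}$ as a sum of differences $\chi_{R_{j+1}}-\chi_{R_{j}}$ supported on dyadic annuli and using that each piece evaluated at $E$ is controlled by $\exp(-c R_{j}^{2}/(t-s))\|\chi_{R_{j+1}}-\chi_{R_{j}}\|_{\L^{2}}$---shows that $(\Gamma(t,s)\chi_{R})_{R}$ is $\L^{2}_{\loc}$-Cauchy, so it converges strongly, and the strong limit must coincide with the weak one. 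This establishes $\Gamma(t,s)1=1$ in $\L^{2}_{\loc}$. The identity $\widetilde\Gamma(s,t)1=1$ follows by the mirror argument, with the roles of the forward and backward problems interchanged.
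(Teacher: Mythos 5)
Your overall route is the same as the paper's: dualize the statement to mass conservation $\int \widetilde\Gamma(r,t)\tpsi = \int\tpsi$ for compactly supported $\tpsi$, obtain $\L^1$ control from the Gaussian $\L^2$ decay by summing over a covering, test the equation against cutoffs $\chi_R$, and kill the two commutator terms as $R\to\infty$. However, there is a genuine gap in Step 2, precisely at the point you flag as the main obstacle: the diffusion term $\int \eta\,\mathbf{A}^*\nabla_v g\cdot\overline{\nabla_v\chi_R}$ is \emph{not} killed by the global $\L^2_{r,x,v}$ integrability of $\nabla_v g$. Indeed $\|\nabla_v\chi_R\|_\infty\sim R^{-1}$ while the annulus supporting $\nabla_v\chi_R$ has measure $\sim R^{2d}$, so Cauchy--Schwarz gives a bound of order $R^{d-1}\,\|\nabla_v g\|_{\L^2(\mathrm{annulus})}$. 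Global square-integrability only tells you that $\|\nabla_v g\|_{\L^2(\mathrm{annulus})}\to 0$ with no rate (summability over disjoint annuli yields at best $o(1)$ along the full sequence), which cannot beat the polynomial factor $R^{d-1}$ for $d\ge 2$; no rescaling of the cutoff in $x$ versus $v$ repairs this. What is needed, and what the paper supplies, is a \emph{quantitative} (exponential) bound on $\|\nabla_v g\|_{\L^2}$ over the annulus: this comes from a Caccioppoli/local energy inequality for $\tilde\chi_R g$ with $\tilde\chi_R$ supported on a slightly enlarged annulus, whose right-hand side involves only $\|g\|_{\L^2}$ on that enlarged annulus and is therefore $O(\e^{-\alpha' R^2/(t-s)^b})$ by the very decay estimate of Theorem~\ref{thm:fundamental-bounds}. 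With that ingredient your argument closes; without it, the step fails. (Your treatment of the transport term is fine: on the support of $\nabla_x\chi_R$ one has $|v|\lesssim R$, so $|v\cdot\nabla_x\chi_R|\lesssim 1$ and the unweighted $\L^1$ tail bound suffices.)

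A secondary, smaller point: pinning the constant ``via the boundary value at $r=t$'' requires $M(r)=\int g(r)\to \int\tpsi$ as $r\to t^-$, and $\L^2$ continuity of $r\mapsto g(r)$ alone does not give convergence of the full-space integrals. You need uniform smallness of the $\L^1$ tail of $g(r)$ outside a large ball as $r\to t^-$ (items (iii)--(iv) of the paper's Lemma~\ref{lem:I}); this follows from the same Gaussian decay you already have in Step~1, but it should be stated, not absorbed into ``a mild continuity argument''. Your Step~3 (weak limit identified with an $\L^2_{\loc}$-strong limit of $\Gamma(t,s)\chi_R$ via a telescoping Gaussian estimate) is correct and in fact spells out the meaning of $\Gamma(t,s)1$ slightly more carefully than the paper does.
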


We need the following lemma, whose proof is postponed.

\begin{lem}
\label{lem:I}
 For $R>0$, let $B_{R}=B(0,R)$ be the ball of radius $R$ centered at 0 in $\R^{d}$ and \pascal{$\chi_{R}(x,v)=\chi(x/R,z/R)$ where $\chi$ is a smooth real-valued function  that is 1 on $B_{1}\times B_{1}$ and supported in $B_{2}\times B_{2}$}.  For any fixed $\psi\in \L^2_{x,v}$ with compact support, we have
\begin{enumerate}
\item \pascal{For all $c>0$ there exists a constant $C<\infty$, depending on $c$ and $\psi$, such that  for $0<t-s<c$,}  
\[\iint_{\R^{2d}} | (\Gamma (t,s)\psi)(x,v)|\, \dx\dv \le C.\]
\item For any $s,t,t'$ with $ s<t<t'$, 
\[\lim_{R\to \infty} \iint_{\R^{2d}} (( \Gamma (t',s)\psi)(x,v)- ( \Gamma (t,s)\psi)(x,v)) \chi_{R}(x,v) \dx\dv =0. \]
\item For any fixed $R$ large enough, 
\[\lim_{t\to s^+} \iint_{\R^{2d}\setminus B_{R}\times B_{R}} | (\Gamma (t,s)\psi)(x,v)|\, \dx\dv =0.\] 
 \item For any fixed $R$ large enough, 
\[\lim_{t\to s^+} \iint_{B_{R}\times B_{R}} |\psi(x,v)- (\Gamma (t,s)\psi)(x,v)|\, \dx\dv =0.\]
\end{enumerate}
\end{lem}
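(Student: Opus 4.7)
Let me fix notation: set $\tau = t-s$ and pick $R_0$ so that $\supp\psi\subset B_{R_0}\times B_{R_0}$. Item (iv) is essentially free: on the bounded set $B_R\times B_R$, Cauchy--Schwarz gives
$\iint_{B_R\times B_R}|\psi - \Gamma(t,s)\psi|\dx\dv \le |B_R\times B_R|^{1/2}\|\psi - \Gamma(t,s)\psi\|_{\L^2_{x,v}}$, and Theorem~\ref{thm:CP0T} guarantees the $\L^2_{x,v}$-continuity of the weak solution at time $s$, so the right-hand side vanishes as $t\to s^+$.

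For (i) and (iii), my plan is to slice $\R^{2d}$ into closed annular sets $A_k := \{k\le\rho_\tau(\cdot,\supp\psi)<k+1\}$, $k\in\IN$. The definition of $\rho_\tau$ forces $A_k\subset\{|v|\le R_0+k+1\}\times\{|x|\le R_0+\tau(R_0+k+1)\}$, so $|A_k|\lesssim(1+k)^d(1+\tau k)^d$. Combining Cauchy--Schwarz on $A_k$ with the Gaussian decay \eqref{eq:L^2decay} yields $\iint_{A_k}|\Gamma(t,s)\psi|\dx\dv \le |A_k|^{1/2}\e^{-\alpha k^2/\tau}\|\psi\|_{\L^2_{x,v}}$. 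For (i), summing over $k\ge 0$ and comparing with $\int_0^\infty u^d\e^{-\alpha u^2}\dd u$ via $k=\sqrt\tau\,u$ produces a bound uniform in $\tau\in(0,c)$. For (iii), on $\R^{2d}\setminus B_R\times B_R$ with $R\ge 4R_0$ and $\tau\le 1/2$, a short case analysis ($|v|\ge R$, which gives $\rho_\tau\ge R-R_0$, versus $|v|<R$ and $|x|\ge R$, which gives $\rho_\tau\ge(R-\tau R-R_0)/\tau$) forces $\rho_\tau\ge R/4$; summing only the tail $k\ge R/4$ produces a bound $\lesssim\tau^{d/2}R^d\,\e^{-\alpha R^2/(16\tau)}\|\psi\|_{\L^2_{x,v}}$, which vanishes as $\tau\to 0^+$ with $R$ fixed.

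For (ii), I would choose $\chi$ of tensor-product form so that $\supp\nabla_x\chi_R\subset\{R\le|x|\le 2R\}\times\{|v|\le 2R\}$ and $\supp\nabla_v\chi_R\subset\{|x|\le 2R\}\times\{R\le|v|\le 2R\}$, and test $\chi_R$ against the weak formulation. Multiplying by a temporal cutoff $\eta_n\in\cD(\R)$ approximating $\mathbf 1_{[t,t']}$ and passing to the limit using the $\L^2_{x,v}$-continuity of $f(\tau):=\Gamma(\tau,s)\psi$ produces the identity
\[
\iint\big(f(t')-f(t)\big)\chi_R\dx\dv = \int_t^{t'}\Big(\iint f\,v\!\cdot\!\nabla_x\chi_R\dx\dv - a_{\tau,x}(f,\chi_R)\Big)\dd\tau,
\]
so it suffices to prove both right-hand terms vanish as $R\to\infty$. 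The transport term is easy: $|v\cdot\nabla_x\chi_R|\le 2\|\nabla\chi\|_\infty$ on its support, so the inner integrand is controlled pointwise by $2\|\nabla\chi\|_\infty |f|\mathbf 1_{\{|x|\ge R\}}$; since (i) yields $f(\tau)\in\L^1_{x,v}$ with a bound uniform on $[t,t']$, two applications of Lebesgue dominated convergence (one inside, one in time) do the job.

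The main obstacle is the diffusion term, because the naive estimate $\|\nabla_v\chi_R\|_{\L^2_{x,v}}\lesssim R^{d-1}$ is not absorbed by the global bound $\nabla_v f\in\L^2_{\tau,x,v}$ coming from the energy equality. My plan is to localize the energy estimate to $\{|v|\gtrsim R\}$, via either (a) a Caccioppoli argument---testing the weak formulation against $\phi(v)^2 f(\tau)$ with $\phi\in C_c^\infty(\R^d)$ radial, equal to $1$ on $\{|v|\ge R\}$ and vanishing on $\{|v|\le R/2\}$, noting that the transport contribution drops after integration by parts in $x$ since $\phi$ is a function of $v$ alone, to obtain
\[
\int_t^{t'}\|\phi\,\nabla_v f\|_{\L^2_{x,v}}^2\dd\tau \lesssim \|\phi\,f(t)\|_{\L^2_{x,v}}^2 + \frac{1}{R^2}\int_t^{t'}\|f\,\mathbf 1_{\{R/2\le|v|\le R\}}\|_{\L^2_{x,v}}^2\dd\tau,
\]
or (b) a Davies-type weighted bound as in the proof of Theorem~\ref{thm:fundamental-bounds}, choosing $h(v)=\min(\delta\,\mathrm{dist}(v,B_{R_0}),N)$ to control $\int_t^{t'}\|\e^h\nabla_v f\|_{\L^2_{x,v}}^2\dd\tau$. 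Either route, together with \eqref{eq:L^2decay} applied to $E=\{|v|\ge R/2\}$ (for which $\rho_\tau(E,\supp\psi)\ge R/2-R_0\gtrsim R$ once $R\gg R_0$), yields exponential smallness of $\int_t^{t'}\|\nabla_v f\|_{\L^2_{x,v}(\{|v|\ge R\})}^2\dd\tau$ in $R^2/(t-s)$. Cauchy--Schwarz in space and time then delivers
\[
\int_t^{t'}|a_{\tau,x}(f,\chi_R)|\dd\tau \lesssim R^{d-1}\,\e^{-cR^2/(t-s)}\|\psi\|_{\L^2_{x,v}},
\]
which vanishes as $R\to\infty$, completing the plan for (ii).
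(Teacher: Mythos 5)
Your proposal is correct and follows essentially the same route as the paper: items (i) and (iii) via Cauchy--Schwarz on a spatial decomposition combined with the $\L^2$ decay \eqref{eq:L^2decay} (the paper covers by unit balls $B(j)\times B(k)$ where you use $\rho_\tau$-annuli), item (iv) via $\L^2$-continuity, and item (ii) via the absolutely continuous pairing with $\chi_R$, a Caccioppoli estimate localized away from $\supp\psi$, and the exponential $\L^2$ smallness on the annulus to kill the $R^{d-1}$ loss from $\|\nabla_v\chi_R\|_{\L^2}$. The only nit is that your $\phi$ in route (a) cannot be in $C_c^\infty$ if it equals $1$ on the unbounded set $\{|v|\ge R\}$ (take it bounded Lipschitz, or cut off at $\{|v|\le 4R\}$ as the paper does with its $\tilde\chi_R$); this does not affect the argument.
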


\begin{proof}[Proof of Theorem~\ref{thm:conservationproperty}] The proof is the same in both cases and we only prove $\widetilde \Gamma (s,t)1=1$. It amounts to showing that  when $\psi\in \L^2_{x,v}$ with compact support,
\begin{align*}
 \iint_{\R^{2d}} ( \Gamma (t,s)\psi)(x,v)\, \dx\dv =  \iint_{\R^{2d}} \psi(x,v)\, \dx\dv.
\end{align*}
We use the estimates and notation in the previous lemma.
Set
 \begin{align*}
 I&=\iint_{\R^{2d}} \psi(x,v)\, \dx\dv, \\
 I_{t}&=\iint_{\R^{2d}} ( \Gamma (t,s)\psi)(x,v) \dx\dv, \\
 I_{R,t}&= \iint_{\R^{2d}} ( \Gamma (t,s)\psi)(x,v) \chi_{R}(x,v) \dx\dv.
 \end{align*} The integrals exist for all $R>0$ and $t>s$ thanks to (i) and $I_{R,t}\to I_{t}$ as $R\to \infty$ by  the dominated convergence theorem. By (ii), we have $I_{R,t}-I_{R,t'}\to 0$ as $R\to \infty$, thus $I_{t}=I_{t'}$ for all $t,t'\in (s,\infty)$. It remains to show that  $I_{t}\to I$ as $t\to s^+$ because it will imply $I_{t}=I$ for all $t>s$. Now fix $R$ large enough so that the support of $\psi$ is contained in $B_{R}\times B_{R}$.  For $t>s$,  one easily obtains
\begin{align*}
|I_{t}- I| &\le  \iint_{\R^{2d}\setminus B_{R}\times B_{R}} | (\Gamma (t,s)\psi)(x,v)|\, \dx\dv \\
&+ 
\iint_{B_{R}\times B_{R}} |\psi(x,v)- (\Gamma (t,s)\psi)(x,v)|\, \dx\dv
\end{align*}
so that by (iii) and (iv), we conclude that the $I_{t}\to I$ as desired. 
\end{proof}

To prove Lemma~\ref{lem:I}, we need two preliminary estimates, the first one being the classical Caccioppoli inequality (or local energy estimate).
\begin{lem}[Local energy estimates and decay estimates]
\label{lem:Caccioppoli}  With the notation $B_{R}$ as in Lemma~\ref{lem:I} we have
\begin{enumerate}
\item Let  $\psi\in \L^2_{x,v}$. For any real-valued $\chi\in \cD(\R^2)$ and any $s<t<t'$, and with $S_{t,t'}=[t,t']\times \R^{2d}$,
\begin{align*}
  \iiint_{S_{t,t'}} |\nabla_{v}&(\chi\Gamma (\tau,s)\psi)(x,v)|^2\, \dx\dv\dd\tau \le  \frac{2}\lambda \iint_{\R^{2d}} | (\chi \Gamma (t,s)\psi)(x,v)|^2\, \dx\dv\\
 & + C_{\lambda,\Lambda}  \iiint_{S_{t,t'}} [(|\nabla_{v}\chi|^2 + |\chi (v\cdot \nabla_{x}\chi)|)(\Gamma (\tau,s)\psi)](x,v)|^2\, \dx\dv\dd\tau.
 \end{align*}

\item Let  $\psi\in \L^2_{x,v}$ with compact support. For all $0<c<\infty$, there is $\alpha'>0$ and $C<\infty$ (depending on $\psi$ also) such that for $0<t-s<c$ and for $R$ large enough,  
\begin{equation}
\label{eq:(ii)}
\iint_{\R^{2d}\setminus B_{R}\times B_{R}} | (\Gamma (t,s)\psi)(x,v)|^2\, \dx\dv \le C \sup_{b=1,3}(t-s)^{\frac{bd}{2}} \e^{-\alpha'  \frac{R^2}{ (t-s)^{b}}}.
\end{equation}  
\end{enumerate} 
\end{lem}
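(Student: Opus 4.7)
My plan is to treat parts (i) and (ii) separately, setting $f(\tau)=\Gamma(\tau,s)\psi$ throughout, and exploiting in each case the weak formulation for $f$ as a solution of the homogeneous kinetic equation on $(s,\infty)$.

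For part (i), $f$ lies in $\L^2_{t,x}\Wdot^{1,2}_v\cap\C([s,\infty);\L^2_{x,v})$. The plan is to test the weak equation $(\partial_\tau+v\cdot\nabla_x)f+\cA f=0$ against $\chi^2\overline f$ on the slab $[t,t']$, which I would justify either by approximation of $\chi^2 f$ in the natural kinetic space, or equivalently by writing the inhomogeneous equation solved by $\chi f$ (with right-hand side $f(v\cdot\nabla_x\chi)$ and velocity-commutator terms) and applying the energy equality of Theorem~\ref{thm:CP0T} to it. Since $\chi(x,v)$ is time-independent and $\nabla_x\cdot v=0$, the transport piece contributes $2\iiint_{S_{t,t'}}|f|^2\chi(v\cdot\nabla_x\chi)$. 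Expanding $\nabla_v(\chi^2\overline f)=\chi^2\nabla_v\overline f+2\chi\overline f\nabla_v\chi$ in the diffusion piece produces a main term bounded below by $2\lambda\iiint\chi^2|\nabla_v f|^2$ via the G\aa rding inequality \eqref{e:ellip-lower}, plus a cross term to which Young's inequality applies, absorbing a small fraction of the main term on the left and leaving $C_{\lambda,\Lambda}\iiint|f|^2|\nabla_v\chi|^2$ on the right. Combining with the elementary identity $|\nabla_v(\chi f)|^2\le 2\chi^2|\nabla_v f|^2+2|f|^2|\nabla_v\chi|^2$ and dropping the nonnegative boundary term $\|\chi f(t')\|_{\L^2}^2$ then yields the stated inequality, with the coefficient $2/\lambda$ arising directly from the G\aa rding constant.

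For part (ii), the plan is to apply the $\L^2$-decay estimate \eqref{eq:L^2decay} of Theorem~\ref{thm:fundamental-bounds} with $E=\R^{2d}\setminus(B_R\times B_R)$ and $F=\supp\psi\subset B_{R_0}\times B_{R_0}$, and to lower-bound $\rho_\tau(E,F)^2/\tau$ (with $\tau=t-s\in(0,c)$) by splitting into two cases. Any $(x,v)\in E$ satisfies $|v|>R$ or $|x|>R$. In the first case, for any $(y,w)\in F$ one has $|v-w|\ge R-R_0\ge R/2$ once $R\ge 2R_0$, so $\rho_\tau(E,F)^2/\tau\ge R^2/(4\tau)$. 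In the second case with $|v|\le R$, using $\tau|w|\le cR_0$ gives $|x-y-\tau w|\ge R-(1+c)R_0\ge R/2$ for $R$ large enough, so $\rho_\tau(E,F)^2/\tau\ge R^2/(4\tau^3)$. Combining the two cases via $e^{-\min(a,b)}\le e^{-a}+e^{-b}$ yields
$$\|\Gamma(t,s)\psi\|_{\L^2(E)}^2\le\|\psi\|_{\L^2}^2\bigl(e^{-\alpha' R^2/\tau}+e^{-\alpha' R^2/\tau^3}\bigr),$$
with $\alpha'$ proportional to the $\alpha$ of Theorem~\ref{thm:fundamental-bounds}. To match exactly the stated form $(t-s)^{bd/2}e^{-\alpha''R^2/(t-s)^b}$ for $b\in\{1,3\}$, I would perform a dyadic decomposition of $E$ into annular shells $(B_{2^{k+1}R}\times B_{2^{k+1}R})\setminus(B_{2^k R}\times B_{2^k R})$, apply the above bound on each shell with $R$ replaced by $2^k R$, and sum; the resulting geometric series of Gaussian terms produces the polynomial prefactor at the cost of a slight shrinking of the constant in the exponent.

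The main obstacle I anticipate is the rigorous justification of the test-function manipulation in (i): at the regularity $f\in\L^2_{t,x}\Wdot^{1,2}_v\cap\C([s,\infty);\L^2_{x,v})$, the function $\chi^2 f$ is not a priori admissible as a test function, so one must either approximate or transfer the identity onto $\chi f$ via its own inhomogeneous equation. Once this is granted, both parts reduce to routine pointwise and geometric estimates, with part (ii) hinging on the elementary fact that the kinetic ``distance'' $\rho_\tau$ separates into a position direction (weighted by $\tau^{-2}$) and a velocity direction, producing the two scales $\tau$ and $\tau^3$ in the exponent.
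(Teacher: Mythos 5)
Your overall strategy coincides with the paper's: part (i) is the energy identity for $\chi f$ on the slab $[t,t']$ plus Young's inequality, and part (ii) combines the $\L^2$ off-diagonal decay \eqref{eq:L^2decay} with the two-scale structure of $\rho_\tau$. Two steps, however, need repair. First, in (i): the ellipticity \eqref{e:ellip-lower} is only a G\aa rding inequality for \emph{gradients}, integrated in $v$; it is not pointwise accretivity of $\mathbf{A}$. Consequently you cannot bound the main term $-2\Re\iiint\langle\mathbf{A}\nabla_v f,\chi^2\nabla_v f\rangle$ from above by $-2\lambda\iiint\chi^2|\nabla_v f|^2$, because $\chi\nabla_v f$ is not the $v$-gradient of any function. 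You must first commute the cutoff, writing $\chi\nabla_v f=\nabla_v(\chi f)-f\nabla_v\chi$, and apply \eqref{e:ellip-lower} to $\nabla_v(\chi f)$; this is exactly what the paper does, and it is also why the left-hand side of (i) is $\iiint|\nabla_v(\chi f)|^2$ rather than $\iiint\chi^2|\nabla_v f|^2$. Your fallback route (energy equality for the inhomogeneous equation satisfied by $\chi f$) avoids the problem, since there G\aa rding applies directly to $a_{\tau,x}(\chi f,\chi f)$; that route, not the pointwise one, is the one to carry out.

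Second, in (ii): the dyadic shell decomposition does not produce the prefactor $(t-s)^{bd/2}$. Each shell is a single application of \eqref{eq:L^2decay} carrying the full mass $\|\psi\|_{\L^2}^2$, so summing $\sum_{k\ge0}\e^{-c(2^kR)^2/\tau^b}$ merely returns a constant multiple of the $k=0$ term; there is no volume counting. In the paper the prefactor comes from a unit-scale lattice covering $\{B(j)\times B(k)\}_{j,k\in\IZ^d}$, where the Gaussian lattice sum $\sum_{|j|\ge R}\e^{-c|j|^2/\tau^b}\lesssim \tau^{bd/2}\e^{-c'R^2/\tau^b}$ behaves like the corresponding integral (this covering is needed anyway for Lemma~\ref{lem:I}(i)). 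That said, your first, global, application of \eqref{eq:L^2decay} already suffices to prove \eqref{eq:(ii)} as stated: with $u=R^2/\tau^b$ one has $\e^{-\alpha' u}\le C_d\,u^{-d/2}\e^{-\alpha' u/2}\le C_d\,\tau^{bd/2}R^{-d}\e^{-\alpha' u/2}$, so the polynomial prefactor is recovered for free (for $R\ge1$) at the cost of halving the exponent. Drop the dyadic step and conclude this way, or adopt the lattice covering if you want the sharper route.
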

\begin{proof}
To prove (i), we set $f(t)=\Gamma (t,s)\psi$ and we use the absolute continuity of $t\mapsto \|\chi f(t)\|_{\L^2_{x,v}}^2$ for $t>s$ since $f$ is a weak solution to $ (\partial_t + v \cdot \nabla_x ) f +\cA f = 0$ on $(s,\infty)\times \R^{2d}$ and using the equation, we have
\begin{align*}
   \|\chi f(t')\|_{\L^2_{x,v}}^2   & -  \|\chi f(t)\|_{\L^2_{x,v}}^2  \\
   &= 2  \int_{t}^{t'} \angle{ (\partial_t + v \cdot \nabla_x ) (\chi f)}{\chi f} \dd\tau  \\
                                   & = 2  \iiint_{S_{t,t'}} \chi (v\cdot \nabla_{x}\chi) |f|^2  - 2 \Re \int_{t}^{t'} \angle{ \mathbf{A}\nabla_{v}f}{\nabla_v (\chi^2 f)} \dd\tau \\
                                   & = 2  \iiint_{S_{t,t'}} \chi (v\cdot \nabla_{x}\chi) |f|^2  - 2 \Re \int_{t}^{t'} \angle{\mathbf{A} \chi \nabla_{v}f}{f \nabla_v \chi +  \nabla_v (\chi f)} \dd\tau \\
                                   & = 2  \iiint_{S_{t,t'}} \chi (v\cdot \nabla_{x}\chi) |f|^2  \\
  &- 2 \Re \int_{t}^{t'} \angle{\mathbf{A}  (\nabla_{v}(\chi f) - f \nabla_v \chi)}{f \nabla_v \chi +  \nabla_v (\chi f)} \dd\tau  \\
                                   & = 2  \iiint_{S_{t,t'}} \chi (v\cdot \nabla_{x}\chi) |f|^2  - 2 \Re \int_{t}^{t'} \angle{\mathbf{A}  \nabla_{v}(\chi f) }{\nabla_v (\chi f)} \dd\tau\\
  &- 2 \Re \int_{t}^{t'} \angle{\mathbf{A}  \nabla_{v}(\chi f) }{f \nabla_v \chi} \dd\tau  + 2 \Re \int_{t}^{t'} \angle{\mathbf{A}  f \nabla_v \chi}{f \nabla_v \chi} \dd\tau.
\end{align*}
We use next the ellipticity of $\mathbf{A}$ in order to get,
\begin{align*}
   \|\chi f(t')\|_{\L^2_{x,v}}^2    -  \|\chi f(t)\|_{\L^2_{x,v}}^2  
     & \le 2  \iiint_{S_{t,t'}} |\chi (v\cdot \nabla_{x}\chi)| |f|^2  - 2 \lambda  \iiint_{S_{t,t'}}   |\nabla_{v}(\chi f)|^2  \\
  & + 2 \Lambda \iiint_{S_{t,t'}}  |\nabla_{v}(\chi f)| |f \nabla_v \chi|   + 2 \Lambda \iiint_{S_{t,t'}}  f^2 |\nabla_v \chi|^2.
\end{align*}
It is now easy to conclude the proof of (i).

To prove (ii), we argue using coverings as follows. Assume that the support of $\psi$ is contained in $F=B_{r}\times B_{r}$. Then remark that for 
$(y,w)\in B_{r}\times B_{r}$ and $0<t-s<c$ then $(y+(t-s)w, w) \in B_{ar}\times B_{ar}$ with $a=1+c$. Take a covering of $\R^d$ by the balls $B(j)=B({j},\sqrt d)$, $j\in \IZ^d$. 
Observe that  for $(j,k)\in \IZ^{2d}$,
\[\rho_{t-s}( B(j)\times B(k), F) \ge \frac{ \rho(B(j), B_{ar})^2}{(t-s)^2} + \rho(B(k),B_{ar})^2
\]
where $\rho$ stands for the distance induced on sets by the Euclidean norm. 
If $R$ is large enough, we can  use \eqref{eq:L^2decay} and get,
\begin{align*}
\iint_{\R^{2d}\setminus B_{R}\times B_{R}}& | (\Gamma (t,s)\psi)(x,v)|^2\, \dx\dv \le \sum_{|k|\ge R-\sqrt d \ \mathrm{or}\ |j|\ge R-\sqrt d} \|\Gamma (t,s)\psi\|_{\L^2_{x,v}(B(j)\times B(k))}^2
\\
&
\le \sum_{|k|\ge R-\sqrt d \ \mathrm{or}\ |j|\ge R-\sqrt d} \e^{-2\alpha\frac{ \rho(B(k), B_{ar})^2}{(t-s)^3} -2\alpha \frac{\rho(B(j),B_{ar})^2}{t-s}}\|\psi\|_{2}^2.
\end{align*}
Remark that when $|j|  \ge 2 (\sqrt d + a r)$, 
\begin{equation}
\rho(B(j), B_{ar}) \ge |j|- \sqrt d - ar \ge |j|/2.
\end{equation}
Hence, with $b=1$ or $3$, one can see when $t-s<c$ that  
\begin{equation}
\label{eq:sum1} \sum_{j\in \IZ^d} \e^{-2\alpha \frac{\rho(B(j),B_{ar})^2}{(t-s)^b}}  \lesssim 1\end{equation}
and,  when $R>\sqrt d+  2 (\sqrt d + a r)$,
\begin{equation}
\label{eq:sum2}\sum_{|j|\ge R-\sqrt d} \e^{-2\alpha \frac{\rho(B(j),B_{ar})^2}{(t-s)^b}}  \le \sum_{|j|\ge R-\sqrt d} \e^{-2\alpha \frac{|j|^2}{4(t-s)^b}} \lesssim (t-s)^{\frac{bd}{2}} \e^{-\alpha' \frac{R^2}{(t-s)^{b}}}, 
\end{equation}
where the implicit constants  depend on $c$ and $r$, hence on the support of $\psi$, but not on $R$. 
Thus, splitting the above double sum in $(j,k)$ in two parts and using these bounds, we obtain the right-hand side of \eqref{eq:(ii)}. 
  \end{proof}

\begin{proof}[Proof of Lemma~\ref{lem:I}]

For (i), we use the covering in the proof above and  by Cauchy Schwarz inequality, we have
\[\iint_{\R^{2d}} | (\Gamma (t,s)\psi)(x,v)|\, \dx\dv \le \sum_{j,k} b_{d} \|\Gamma (t,s)\psi\|_{\L^2_{x,v}(B(j)\times B(k))},\]
where $b_{d}$ is the square root of volume of $B(j)\times B(k)$, which is independent of $j,k$. Now we use again \eqref{eq:L^2decay} and argue 
using \eqref{eq:sum1} for each sum in $j$ and $k$.

For (iii), we argue as in the Lemma above with \eqref{eq:sum1} and  \eqref{eq:sum2} for the tail, which gives us a term tending to 0 as $t-s\to 0$.

The proof of (iv) is a consequence of the continuity of $t\in [s,\infty)\to \Gamma(t,s)\psi$ in $\L^2_{x,v}$ and Cauchy-Schwarz inequality. 

We turn to the proof of (ii). We fix $t,t'$ with $s<t<t'$. Write $f(t)=\Gamma (t,s)\psi$. Remark that  the integrals rewrite $\angle{f (t')}{\chi_{R}}- \angle{f(t)}{\chi_{R}}$ and we use the absolute  continuity to obtain that this is equal to
\begin{align*}
& \int_{t}^{t'} \angle{ (\partial_t + v \cdot \nabla_x )  f}{\chi_{R} } \dd\tau  + 
 \int_{t}^{t'} \angle{ f}{ (\partial_t + v \cdot \nabla_x )  \chi_{R}} \dd\tau
 \\
 & =  -  \int_{t}^{t'} \angle{ A \nabla_v   f}{\nabla_{v}\chi_{R} } \dd\tau  + 
 \iiint_{S_{t,t'}}  f v \cdot \nabla_x   \chi_{R}.
\end{align*}
We notice that both  $\nabla_{v}\chi_{R}$ and  $ v \cdot \nabla_x   \chi_{R}$ are supported in  $B_{2R}\times B_{2R}\setminus B_{R}\times B_{R}$. Thus the right-hand side is unchanged if  we multiply  $f$
by a function $\tilde \chi_{R}$ where $\tilde\chi$ is supported in  
$B_{4}\times B_{4}\setminus B_{1/2}\times B_{1/2}$ and is one on $B_{2}\times B_{2}\setminus B_{1}\times B_{1}$. 
Both integrals are controlled using Cauchy-Schwarz inequality.   For the one involving $\nabla_{v}(\tilde \chi_{R} f)$  we use  the Caccioppoli inequality in (i) of Lemma~\ref{lem:Caccioppoli} with $\chi$ replaced by $\tilde\chi_{R}$, and then insert the estimate \eqref{eq:(ii)} to conclude when $R\to \infty$. For the second one, we use \eqref{eq:(ii)} directly. Further details are left to the reader.  
\end{proof}

\subsection{Local weak solutions}
\label{sec:weaksol}

We say that $f$ is a local weak solution to the kinetic equation $ (\partial_t + v \cdot \nabla_x ) f +\cA f = 0$ in an open set $\Omega \subset \R^{2d+1}$, if   
$f$ and  $\nabla_{v}f$   belong to  $\L^2(\Omega)$ and  the  equation is satisfied weakly against test functions $h \in \cD(\Omega)$:
\[
  -\iiint_{\Omega} f (\partial_{t}+v\cdot\nabla_{x})  h  \dt \dx \dv + \iiint_{\Omega} \mathbf{A} \nabla_{}{v} f \cdot {\nabla_{v}h} \dt \dx \dv= 0. 
\]

We say that $f$ is a local weak solution to the backward kinetic equation $ -(\partial_t + v \cdot \nabla_x ) f +\cA^* f = 0$ in an open set $\Omega$, if   
$f$ and  $\nabla_{v}f$   belong to  $\L^2(\Omega)$ and  the  equation is satisfied weakly against test functions $h \in \cD(\Omega)$:
\[
  +\iiint_{\Omega} f (\partial_{t}+v\cdot\nabla_{x})  h  \dt \dx \dv + \iiint_{\I\times \U_{x}\times \U_{v}} \mathbf{A}^* \nabla_{v} f \cdot {\nabla_{v}h} \dt \dx \dv= 0. 
\]

\begin{lem}[Continuity in $\L^2_{\loc}$ of local weak solutions]\label{lem:continuity} Let    $\I\times \U_{x}\times \U_{v}$ be an open set whose closure is a subset  of $ \Omega$. Any local weak solution $f$ on $\Omega$ as above belongs to $\C(\overline \I\  ;\,  \L^2(\U_{x}\times \U_{v}))$.
\end{lem}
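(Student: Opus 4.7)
The plan is to reduce the local continuity statement to the global Cauchy problem theory on $\R$ (Remark~\ref{rem:1}) via a localization in all three variables.

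First I would pick cutoffs $\zeta \in \cD(\R)$ and $\chi \in \cD(\R^{2d})$ with $\zeta \equiv 1$ on a neighborhood of $\overline{\I}$, $\chi \equiv 1$ on a neighborhood of $\overline{\U_{x} \times \U_{v}}$, and $\supp \zeta \times \supp \chi \subset \Omega$. Setting $g(t,x,v) = \zeta(t)\chi(x,v) f(t,x,v)$ (extended by zero), the compactness of $\supp(\zeta \otimes \chi)$ together with $f, \nabla_{v}f \in \L^2(\Omega)$ gives $g \in \L^2_{t,x}\H^1_{v}(\R \times \R^{2d})$ with compact support in $(t,x,v)$.

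The key step is to derive the equation for $g$. A distributional Leibniz-rule computation, combined with the local weak formulation of the equation for $f$, yields
\[
(\partial_{t} + v\cdot\nabla_{x}) g + \cA g = S,
\]
with
\[
S = (\zeta' \chi + \zeta\, v\cdot\nabla_{x} \chi)\, f - \zeta\, \nabla_{v}\chi \cdot \mathbf{A}\nabla_{v} f - \div_{v}\!\bigl(\zeta f\, \mathbf{A}\nabla_{v} \chi\bigr),
\]
where the last term must be kept as a genuine distribution since $\mathbf{A}$ is only measurable. Because $v$ is bounded on $\supp \chi$ and $\zeta$ has compact support in time, the first two terms lie in $\L^2(\R \times \R^{2d}) \subset \L^1_{t}\L^2_{x,v}$, while the third belongs to $\L^2_{t,x}\H^{-1}_{v}$. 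Hence $S$ sits inside the inhomogeneous source space of Theorem~\ref{thm:CP0T} on $\R$.

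I would then rewrite the equation as $(\partial_{t} + v\cdot\nabla_{x}) g + (\cA + c) g = S + c g$ for some fixed $c>0$, which is strictly above $c_{0}=0$. Remark~\ref{rem:1} provides existence and uniqueness of the weak solution in $\L^2_{t,x}\H^1_{v}$, so $g$ must coincide with the solution constructed there, which belongs to $\C_{0}(\R; \L^2_{x,v})$. Since $\zeta\chi \equiv 1$ on $\overline{\I} \times \U_{x} \times \U_{v}$, we have $g = f$ there, and restricting the $\L^2_{x,v}$-valued continuity of $g$ yields the desired $f \in \C(\overline{\I}; \L^2(\U_{x} \times \U_{v}))$. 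The backward case is handled symmetrically, using the analogous statement on $\R$ for $-(\partial_{t} + v \cdot \nabla_{x}) + \cA^*$.

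The main obstacle is the second step: making the distributional algebra rigorous using only $f, \nabla_{v} f \in \L^2(\Omega)$ (no a priori control on $(\partial_{t}+v\cdot\nabla_{x})f$ separately), in particular treating $\div_{v}(\zeta f\,\mathbf{A}\nabla_{v}\chi)$ by duality since $\mathbf{A}$ is not differentiable, and then verifying the precise membership of $S$ in the source space required by the existence and uniqueness theorem.
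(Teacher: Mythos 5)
Your localization is exactly the paper's: multiply $f$ by a smooth cutoff supported in $\Omega$ and equal to $1$ near $\overline{\I}\times\overline{\U_{x}\times\U_{v}}$, and observe that the extension by zero solves a global equation whose source splits as an $\L^2_{t,x,v}$ (hence $\L^1_{t}\L^2_{x,v}$) part plus a $\div_{v}$ of an $\L^2_{t,x,v}$ field, i.e.\ an element of $\L^2_{t,x}\H^{-1}_{v}$. Your source decomposition is the same as the paper's $\div G+g$. The only divergence is the endgame: the paper reads off continuity directly from the embedding $\cF^\beta_\beta\subset\cL^\beta_\beta$ and Theorem~1.1 of \cite{AIN} (i.e.\ membership in the kinetic space already forces $\C_{0}(\R;\L^2_{x,v})$), whereas you shift by $c>0$ and identify $g$ with the unique solution of the inhomogeneous Cauchy problem on $\R$ from Remark~\ref{rem:1}. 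Your detour is legitimate --- $g$ is compactly supported, lies in $\L^2_{t,x}\H^1_{v}$, which is the uniqueness class, and $cg\in\L^1_{t}\L^2_{x,v}$ --- it just invokes the full well-posedness theory where only the trace/embedding theorem is needed. The ``main obstacle'' you flag is not really one: every term in the weak formulation for $g$ against $h\in\cD(\R^{2d+1})$ reduces, after the Leibniz rule on the smooth cutoff, to testing the local equation for $f$ against $\zeta\chi h\in\cD(\Omega)$, and the term $\div_{v}(\zeta f\,\mathbf{A}\nabla_{v}\chi)$ never needs to be expanded since it is paired with $\nabla_{v}h$ by definition of the duality.
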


\begin{proof} Define $\tf$ on $\R\times\R^d\times \R^d$ as the zero extension outside of $\Omega$ of $\tf=f\chi$, where $\chi$ is a smooth cut-off localized in a neighborhood of the closure of $\I\times \U_{x}\times \U_{v} $ contained in $\Omega$. Then $\tf \in \L^2_{t,x}\W^{1,2}_{v}$  and satisfies an equation $(\partial_t + v \cdot \nabla_x ) \tf =-\cA \tf + \div G +g $ using the canonical extension of $\cA$ and $G,g$ are both $\L^2_{t,x,v}$ functions. Thus $\tf$ belongs to the space $\cF^\beta_{\beta}$ of \cite{AIN}. Using  $\cF^\beta_{\beta}\subset \cL^\beta_{\beta}$ and Theorem 1.1 there, we have that $\tf \in \C_{0}(\R\, ;\, \L^2_{x,v}).$ The conclusion follows by restriction.
 \end{proof}

{The following definition introduces quantitative boundedness for local weak solutions.}

For a point $z_{0}=(t_{0}, x_{0},v_{0})\in \R\times \R^d\times \R^d$, and $r>0$, the forward  and backward kinetic cylinders centered at $z_{0}$ and of size $r$ are defined by
\begin{align*}
Q_{r}(z_{0})=\{(t,x,v)\, ; \, t\in (t_{0}-r^2, t_{0}], |v-v_{0}|<r, |x-x_{0}-(t-t_{0})v_{0}| <r^3\}
\end{align*}
\begin{align*}
Q_{r}^*(z_{0})=\{(t,x,v)\, ; \, t\in [t_{0}, t_{0}+r^2), |v-v_{0}|<r, |x-x_{0}\  {-}\  (t-t_{0})v_{0}| <r^3\}
\end{align*}
Let 
$B_{r}(x_{0},v_{0})=\{(x,v)\, ; \,  |v-v_{0}|<r, |x-x_{0}| <r^3\}$.

\begin{defn}[Local Boundedness property] 
\label{def:localboundedness}
 We say that the operator  $ (\partial_t + v \cdot \nabla_x )  +\cA  $, respectively   $ -(\partial_t + v \cdot \nabla_x )  +\cA^* $,  has the local boundedness property if there exists $0<B<\infty$ such that for all $z_{0}\in \R^{2d+1}$ and $r>0$, any local weak solution to 
 $ (\partial_t + v \cdot \nabla_x ) f +\cA f = 0$, in an open neighborhood of $Q_{2r}(z_{0})$, respectively $ -(\partial_t + v \cdot \nabla_x ) \tilde f +\cA^* \tilde f = 0$ and $Q_{2r}^*(z_{0})$,  has local bounds of the form, respectively, 
\begin{align}\label{eq:localbound}
\esssup_{B_{r}(x_{0},v_{0})}|f(t_{0},\cdot)|^2 &\le \frac {B^2} {r^{4d+2}} \iiint_{Q_{2r}(z_{0})} |f|^2\, \dd t\dd x \dd v ,
 \\
\label{eq:localbound*}  \esssup_{B_{r}(x_{0},v_{0})}|\tilde f(t_{0},\cdot)|^2 &\le \frac {B^2} {r^{4d+2}} \iiint_{Q_{2r}^*(z_{0})} |\tilde f|^2 \, \dd t\dd x \dd v .
\end{align}
\end{defn}

\begin{rem}
 The condition is scale invariant: For a point $z_{0}=(t_{0}, x_{0},v_{0})\in \R\times \R^d\times \R^d$, let 
$\cT_{z_{0}}$ be the transformation 
\begin{align*}
 \cT_{z_{0}}(t,x,v)=(t_{0}+t, x_{0}+x+tv_{0}, v_{0}+v)
\end{align*}
and for $r>0$,  $\delta_{r}(t,x,v)= (r^2t, r^3 x, r v)$. Then $Q_{r}(z_{0})= \cT_{z_{0}}(\delta_{r}(Q_{1}(0,0,0))).$
\end{rem}

\begin{rem} Note that these conditions are usually presented by taking suprema on $Q_{r}(z_{0})$, $Q_{r}^*(z_{0})$ respectively, which means that one needs to know that local weak solutions have pointwise values. Our weaker formulation where we take the essential supremum only on the ``top'' and ``bottom'' of $Q_{r}(z_{0})$ and $Q_{r}^*(z_{0})$ respectively suffices. \pascal{As a matter of fact,  since solutions are continuously valued into $\L^2_{\loc}$ (Lemma \ref{lem:continuity}), this formulation is equivalent to scale invariant  $\L^2-\L^\infty$ Moser estimates where one takes the essential suprema on $Q_{r}(z_{0})$, $Q_{r}^*(z_{0})$ (for all $r, z_{0}$)  on the left hand sides, respectively. In particular, this criterion does not require to know that solutions {are} continuous, or even defined, at all points. } \end{rem}

\subsection{Gaussian upper bound and scale invariant $\L^2-\L^\infty$ Moser estimates}

\begin{thm}[Pointwise Gaussian upper bound for the fundamental solution]
\label{thm:MoserimpliesGUB}
 Assume the conditions of Theorem~\ref{thm:fundamental-bounds} and that $ (\partial_t + v \cdot \nabla_x )  +\cA  $ and   $ -(\partial_t + v \cdot \nabla_x )  +\cA^* $  have the local boundedness property  \eqref{eq:localbound} and \eqref{eq:localbound*}.
  
Then, for all $t>s$, the fundamental solution operator $\Gamma(t,s)$ has  an integral kernel  $\Gamma(t,x,v,s,y,w)$, called generalized fundamental solution, in the sense that  for all $\psi\in \L^2_{x,v}$, for almost every $(x,v) \in \R^{2d}$, 
\begin{equation}
\label{eq:integralrepresentation}
(\Gamma(t,s)\psi)(x,v)=\iint_{\R^{2d}} \Gamma(t,x,v,s,y,w)\psi(y,w)\, \dd y \dd w, 
\end{equation}
with  pointwise kinetic Gaussian upper bound for almost every $(x,v,y,w) \in \R^{4d}$,
\begin{equation}
\label{eq:GUB}
|\Gamma(t,x,v,s,y,w)|\le \frac {C_{d,\lambda,\Lambda}B^2} {(t-s)^{2d}}\e^{-c_{\lambda,\Lambda}\big({\tfrac{|x-y-  (t-s) w|^2}{(t-s)^3}+ \tfrac{|v-w|^2}{t-s}}\big)},
\end{equation}
with $c_{\lambda,\Lambda}>0$. 
 \end{thm}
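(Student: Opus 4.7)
The plan is to follow the Hofmann--Kim strategy from \cite{MR2091016} in the uniformly parabolic setting, adapted to the Galilean kinetic geometry. The core idea is to combine the $\L^2$ off-diagonal decay of Theorem~\ref{thm:fundamental-bounds} with the Moser-type estimates \eqref{eq:localbound}--\eqref{eq:localbound*} to obtain a pointwise $\L^1$--$\L^\infty$ bound with Gaussian weight; once such a bound is in hand, the kernel representation \eqref{eq:integralrepresentation} together with the pointwise estimate \eqref{eq:GUB} follow by a standard Schwartz-kernel/duality argument.

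First I would derive an $\L^2$--$\L^\infty$ Gaussian bound. Fix $\psi\in \L^2_{x,v}$ supported in a set $F$. The function $u(\tau,z):=(\Gamma(\tau,s)\psi)(z)$ is a local weak solution of $(\partial_\tau+v\cdot\nabla_x)u+\cA u=0$ on $(s,\infty)\times \R^{2d}$ in the sense of Section~\ref{sec:weaksol} (by Theorem~\ref{thm:representation0T}, Lemma~\ref{prop:SFE}, and Lemma~\ref{lem:continuity}). Applying \eqref{eq:localbound} at $z_0=(t,x,v)$ with radius $\rho\sim\sqrt{t-s}$, so that $Q_{2\rho}(z_0)\subset (s,t]\times\R^{2d}$, and invoking \eqref{eq:L^2decay} on each time slice to estimate the right-hand side gives
\[
|(\Gamma(t,s)\psi)(x,v)|\lesssim \frac{B}{(t-s)^{d}}\,\exp\!\bigl(-c\,\rho_{t-s}(\{(x,v)\},F)^2\bigr)\,\|\psi\|_{\L^2(F)}.
\]
The hypothesis \eqref{eq:localbound*} yields the same bound for the adjoint family $\widetilde\Gamma(s,t)=\Gamma(t,s)^*$.

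Next I would apply the Chapman--Kolmogorov relation $\Gamma(t,s)=\Gamma(t,r)\Gamma(r,s)$ at the midpoint $r=(s+t)/2$. Duality applied to the $\L^2$--$\L^\infty$ bound for $\widetilde\Gamma(s,r)$ gives an $\L^1$--$\L^2$ bound for $\Gamma(r,s)$; composing with the $\L^2$--$\L^\infty$ bound for $\Gamma(t,r)$ from the previous step and merging the two Gaussian factors via a quasi-triangle inequality for the kinetic pseudo-distance $\rho_\tau$ produces an $\L^1$--$\L^\infty$ estimate of the form
\[
\|\Gamma(t,s)\psi\|_{\L^\infty(B_1)}\le \frac{C B^2}{(t-s)^{2d}}\,\exp\!\Bigl(-c_{\lambda,\Lambda}\Bigl(\tfrac{|x-y-(t-s)w|^2}{(t-s)^3}+\tfrac{|v-w|^2}{t-s}\Bigr)\Bigr)\,\|\psi\|_{\L^1(B_2)}
\]
for small balls $B_1\ni(x,v)$, $B_2\ni(y,w)$. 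Extending by density and shrinking the balls exhibits $\Gamma(t,s)$ as a bounded operator from $\L^1$ to $\L^\infty$ with pointwise Gaussian control. A Dunford--Pettis / Schwartz-kernel argument then produces the measurable kernel $\Gamma(t,x,v,s,y,w)$ for which \eqref{eq:integralrepresentation} holds, and Lebesgue differentiation propagates the operator bound to the pointwise inequality \eqref{eq:GUB}.

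The main obstacle is the geometric bookkeeping around the Galilean pseudo-distance $\rho_\tau$. In the $\L^2$--$\L^\infty$ step one has to verify that on the kinetic cylinder $Q_{2\rho}(t,x,v)$, the map $\tau\mapsto \rho_{\tau-s}(E_\tau,F)^2/(\tau-s)$ (with $E_\tau$ the spatial slice of the cylinder at time $\tau$) is bounded below by a constant multiple of $\rho_{t-s}(\{(x,v)\},F)^2/(t-s)$; the drift $(\tau-t)v$ appearing in the definition of $Q_{2\rho}$ must be reconciled with the drift $(\tau-s)w$ entering $\rho_{\tau-s}$. The Chapman--Kolmogorov step likewise hinges on a quasi-triangle inequality of the form $\rho_{t-s}(P,Q)^2/(t-s)\lesssim \rho_{t-r}(P,R)^2/(t-r)+\rho_{r-s}(R,Q)^2/(r-s)$ uniform in the intermediate point $R$, which ultimately rests on the homogeneous group structure underlying the kinetic scaling; checking these two inequalities is the computational heart of the argument.
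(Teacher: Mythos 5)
Your proposal follows the same Hofmann--Kim blueprint as the paper (Moser bound $\Rightarrow$ weighted $\L^2$--$\L^\infty$; adjoint and duality $\Rightarrow$ $\L^1$--$\L^2$; Chapman--Kolmogorov at the midpoint $\Rightarrow$ $\L^1$--$\L^\infty$; Dunford--Pettis $\Rightarrow$ kernel), but it executes the Gaussian bookkeeping differently, and the difference matters. The paper never reduces to the set-based off-diagonal bound \eqref{eq:L^2decay} and a quasi-triangle inequality for $\rho_\tau$. Instead it keeps the Davies conjugation alive throughout: it applies \eqref{eq:localbound} to $f=\Gamma(\cdot,s)(\e^{-h}\psi)$ for an \emph{arbitrary} bounded Lipschitz $h$, controls the error $2h_{t-s}(x',v')-2h_{\tau-s}(y,w)\le\theta(h,t-s)$ on the cylinder by a three-line computation using only $\|\nabla_v h\|_\infty$, $\|\nabla_x h\|_\infty$ and the cylinder geometry, composes the conjugated operators so that the intermediate exponentials telescope ($\e^{h_{t-r}}\Gamma(t,r)\e^{-h}\cdot\e^{h}\Gamma(r,s)\e^{-h_{s-r}}$), and only optimizes over $h$ pointwise in $(x,v,y,w)$ at the very end. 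This is engineered precisely to avoid the two verifications you correctly identify as the ``computational heart'' of your route: (a) comparing $\rho_{\tau-s}(E_\tau,F)^2/(\tau-s)$ with $\rho_{t-s}(\{(x,v)\},F)^2/(t-s)$ across the slices of a kinetic cylinder, where the drift $(\tau-t)v$ in $Q_{2r}$ must be reconciled with the drift $(\tau-s)w$ in $\rho_{\tau-s}$ (the loss in the position part when $|v-w|$ is large must be recovered from the velocity part); and (b) the quasi-triangle inequality at the midpoint together with the partition-and-summation over intermediate cells needed to convert two off-diagonal operator bounds into one pointwise kernel bound. Both (a) and (b) do hold (for (b) at $r=\tfrac{t+s}{2}$ one has $x-y-(t-s)w=(x-z-\tfrac{t-s}{2}u)+(z-y-\tfrac{t-s}{2}w)+\tfrac{t-s}{2}(u-w)$, so the cross term is absorbed by the velocity Gaussian), so your plan is viable, but as written it defers exactly the hardest steps; the paper's $\e^{h_t}$-conjugation buys a much shorter geometric argument at the price of carrying the abstract weight $h$ through every estimate. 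Two small slips: your first displayed bound is missing the normalization $/(t-s)$ in the exponent (compare \eqref{eq:L^2decay}), and the radius must be taken small enough (the paper uses $Q_{\sqrt{t-s}/2}$) so that a neighborhood of the doubled cylinder stays in $(s,t]\times\R^{2d}$.
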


 \begin{proof}  Let us introduce the quantity
 \[\theta(h,a)=\|\nabla_{v}h\|_{\infty}\,a^{1/2} + 4\|\nabla_{x}h\|_{\infty}\, a^{3/2}, 
\]
when $a>0$ and $h$ is a function on $\R^{2d}$. 

Under the hypotheses of Theorem~\ref{thm:fundamental-bounds} we have proved \eqref{eq:hperturbation}, which implies that for all $t>s$, $\psi\in \L^2_{x,v}$ and real, Lipschitz and bounded function  $h$ on $\R^{2d}$, 
\begin{align*}
\|f_{h}(t)\|_{\L^2_{x,v}} \le  \e^{\kappa  \theta(h,t-s)^2}\  \|\psi\|_{\L^2_{x,v}}.
\end{align*}
where $\kappa=\frac{2\Lambda^2}{\lambda}$, with $f_{h}(t)
= \e^{h_{t-s}}\Gamma(t,s)(\e^{-h}\psi)$.  

Step 1: We may apply \eqref{eq:localbound} to $f(t)=\Gamma(t,s)(\e^{-h}\psi)$ and obtain for $0<t-s$ and $(x,v)\in \R^{2d}$ that for almost every $(x',v')\in B_{\sqrt {t-s}/4}(x,v)$,
 \begin{align*}
 |f(t,x',v')|^2  &\le \frac {2^{2+4d}B^2} {(t-s)^{1+2d}} \iiint_{Q_{\sqrt{t-s}/2}(t,x,v)} |f(\tau,y,w)|^2 \, \dd y\dd w \dd \tau. 
 \end{align*}
 Thus, 
\begin{align*}
 |f_{h}(t,x',v')|^2  &\le \frac {2^{2+4d}B^2} {(t-s)^{1+2d}} \iiint_{Q_{\sqrt{t-s}/2}(t,x,v)} \e^{2h_{t-s}(x',v')-2h_{\tau-s}(y,w)}|f_{h}(\tau,y,w)|^2 \, \dd y\dd w \dd \tau. 
\end{align*}
Using the Lipschitz bounds for $h$,
\begin{align*}
 h_{t-s}(x',v')-&h_{\tau-s}(y,w)= h(x'-(t-s)v',v')-h(y-(\tau-s)w, w)
 \\
 & \le \|\nabla_{v}h\|_{\infty}|v'-w| + \|\nabla_{x}h\|_{\infty}|x'-(t-s)v'-(y-(\tau-s)w)|.
\end{align*}
Using $(x',v')\in B_{\sqrt {t-s}/4}(x,v)$ and $(\tau,y,w)\in Q_{\sqrt{t-s}/2}(t,x,v)$,
\begin{align*}
 |x'-(t-s)v'&-(y-(\tau-s)w)| \\
 & \le |x'-x|+ |y-x-(\tau-t)v| + |-(t-s)v'-(\tau-t)v+(\tau-s)w|
 \\
 & \le |x'-x|+ |y-x-(\tau-t)v| + |(t-s)(v-v')|+|(\tau-s)(w-v)|
 \\
 & \le 2(t-s)^{3/2},
\end{align*}
and we obtain 
\[ 2h_{t-s}(x',v')-2h_{\tau-s}(y,w) \le \theta(h,t-s).\]
Thus, 
\begin{align*}
 |f_{h}(t,x',v')|^2  &\le \frac {2^{2+4d}B^2 \e^{\theta(h,t-s)}} {(t-s)^{1+2d}} \iiint_{Q_{\sqrt{t-s}/2}(t,x,v)} |f_{h}(\tau,y,w)|^2 \, \dd y\dd w \dd \tau. 
\end{align*}
For the last integral, we use our $\L^2$ estimate:
\begin{align*}
 \iiint_{Q_{\sqrt{t-s}/2}(t,x,v)} |f_{h}(\tau,y,w)|^2 \, \dd y\dd w \dd \tau 
& \le \int_{\frac{t+s}{2}}^t \|f_{h}(\tau)\|_{\L^2_{x,v}}^2\, \dd \tau
 \\
 &\le \int_{\frac{t+s}{2}}^t \e^{2\kappa  \theta(h,\tau-s)^2}\  \|\psi\|_{\L^2_{x,v}}^2\, \dd\tau
 \\
&\le  \frac{t-s}2\, \e^{2\kappa  \theta(h,t-s)^2}\  \|\psi\|_{\L^2_{x,v}}^2.
\end{align*}
As $(x,v)$ was an arbitrary point in $\R^{2d}$ and $(x',v')$ almost every point in $B_{\sqrt {t-s}/4}(x,v)$, we have obtained
\begin{align*}
\|f_{h}(t)\|_{\L^\infty_{x,v}}^2\le \frac {2^{3+4d}B^2 \e^{\theta(h,t-s)+2\kappa  \theta(h,t-s)^2}} {(t-s)^{2d}} \  \|\psi\|_{\L^2_{x,v}}^2.
\end{align*}
Note that $\psi$ is arbitrary, so this amounts to an $\L^2-\L^\infty$ estimate for the operator $\e^{h_{t-s}}\Gamma(t,s)\e^{-h}$, $t>s$.

Step 2:
For the backward equation,  with the same argument  we obtain the  same inequality for the operator 
 $\e^{h_{s-t}}\widetilde\Gamma (s,t)\e^{-h}$, $s<t$ (here $t$ is fixed and $s$ varies). As $\widetilde\Gamma (s,t)=\Gamma (t,s)^*$, see Proposition~\ref{prop:propGreenop}, by duality and changing $h$ to $-h$, this implies an $\L^1-\L^2$ estimate for the operator $\e^{h}\Gamma (t,s)  \e^{-h_{s-t}}$ with the same bound (More precisely, this operator originally defined $\L^1\cap \L^2$ extends continuously to $\L^1$ into $\L^2$.) 
 
 Step 3: 
 Now we use the Chapman-Kolmogorov relation with $r=\frac{t+s}2$, see Proposition~\ref{prop:propGreenop}. We shall use several times that $t-r=r-s=\frac{t-s}2$. First,
 \begin{align*}
 \e^{h_{t-r}}\Gamma(t,s)\e^{-h_{r-t}}&=\e^{h_{t-r}}\Gamma(t,r)\Gamma(r,s)\e^{-h_{r-t}}
 \\
 &=\e^{h_{t-r}}\Gamma(t,r)\e^{-h}\e^{h}\Gamma(r,s)\e^{-h_{s-r}}.
 \end{align*}
Thus we obtain an $\L^1-\L^\infty$ operator bound for $ \e^{h_{t-r}}\Gamma(t,s)\e^{-h_{s-r}}$ with norm bounded by 
\begin{align*}
\frac {2^{3+4d}B^2 \e^{\theta(h,t-r)+2\kappa  \theta(h,t-r)^2}} {(t-r)^{2d}} = 
\frac {2^{3+6d}B^2 \e^{\theta(h,t-r)+2\kappa  \theta(h,t-r)^2}} {(t-s)^{2d}} .
 \end{align*}
By the Dunford--Pettis theorem (see  \cite[Theorem 1.3]{MR1253180}),  this amounts to the fact that for all $t>s$, $ \e^{h_{t-r}}\Gamma(t,s)\e^{-h_{s-r}}$ 
  is an integral operator with  measurable kernel having $\L^\infty$ norm equal to the $\L^1-\L^\infty$ operator norm. We have shown that for all $t>s$, $\Gamma(t,s)$ has a  measurable kernel, that we denote $\Gamma(t,x,v,s,y,w):=\Gamma(t,s)(x,v,y,w)$, having an almost everywhere bound
\begin{equation}
\label{eq:Gamma(t,x,v,s,y,w)h}
|\Gamma(t,x,v,s,y,w)|\le \e^{h_{s-r}(y,w)-h_{t-r}(x,v)} \frac {2^{3+6d}B^2 \e^{\theta(h,t-r)+2\kappa  \theta(h,t-r)^2}} {(t-s)^{2d}}=:  \frac {2^{3+6d}B^2 \e^L} {(t-s)^{2d}}
\end{equation}
and \eqref{eq:integralrepresentation} holds for $\psi\in \L^1_{x,v}\cap \L^2_{x,v}$. Recall that $h$ is an arbitrary, real, bounded and Lipschitz function. 
 Taking 
$h=0$ already gives us the on-diagonal pointwise bound
\begin{equation}
\label{eq:Gamma(t,x,v,s,y,w)}
|\Gamma(t,x,v,s,y,w)|\le  \frac {2^{3+6d}B^2 } {(t-s)^{2d}}.
\end{equation}

Step 4: The final step is to obtain the pointwise Gaussian bound \eqref{eq:GUB}, which will allow us to obtain \eqref{eq:integralrepresentation} for all $\psi\in \L^2_{x,v}$ as such a bound is integrable with respect to $(y,w)$. Note that $L$ in \eqref{eq:Gamma(t,x,v,s,y,w)h} depends on $h$ and all variables $(t,x,v,s,y,w)$.  We fix $t>s$ and variables $(x,v,y,w)$ for which the above estimate \eqref{eq:Gamma(t,x,v,s,y,w)h} holds, and select $h$ depending on them to minimize $L$ in the exponential factor $\e^L$.

For $0<\delta, N<\infty$ appropriate,  we let $h$ be the positive function defined by 
\[
h(x',v')^2=\min \bigg(\delta^2   \bigg(\frac{|x'-y -(t-r)(2w-v')|^2}{(t-s)^2}+ |v'-w|^2 \bigg), N^2\bigg), \quad (x',v')\in \R^{2d}.
\]
We see that  $h$ is bounded, and Lipschitz with $\|\nabla_{x'}h\|_{\infty}\le \frac \delta {t-s}$ and $\|\nabla_{v'}h\|_{\infty}\le \frac{3\delta}2$. Moreover, 
\[h_{s-r}(y,w)=h(y-(s-r)w,w)=h(y+(t-r)w,w)= 0
\]
and with $E=\{(x,v)\}$ and $F=\{(y,w)\}$,
\[
h_{t-r}(x,v)=h(x-(t-r)v,v)=  \delta\rho_{t-s}(E,F)
\]
provided $N$ is larger than this last quantity. Moreover, 
\[\theta(h, t-r) \le \frac {3\delta}{2\sqrt 2}(t-s)^{1/2} + \frac {4\delta}{t-s} |t-r|^{3/2} \le 2\sqrt 2 \delta (t-s)^{1/2}.
\]
Now we choose 
\[
\delta= \frac{\rho_{t-s}(E,F)}{32\kappa (t-s)}.
\]
Then, a simple calculation shows  that 
\[
L  \le -\frac{\rho_{t-s}(E,F)^2}{64\kappa (t-s)}+ \frac{\sqrt 2 \rho_{t-s}(E,F)}{16\kappa (t-s)^{1/2}}.  
\]
Either $ \frac{4\sqrt 2 \rho_{t-s}(E,F)}{(t-s)^{1/2}} \le 1$, in which case 
$L\le \frac 1{64\kappa}-\frac{\rho_{t-s}(E,F)^2}{64\kappa (t-s)}$, or $\frac{4\sqrt 2 \rho_{t-s}(E,F)}{(t-s)^{1/2}} \ge 1$, in which case $L \le -\frac{\rho_{t-s}(E,F)^2}{128\kappa (t-s)}.
$ In any case, 
\[
L\le  \frac 1{64\kappa}-\frac{\rho_{t-s}(E,F)^2}{128\kappa (t-s)}.
\]

\end{proof}

\begin{thm}[Pointwise upper bound implies local boundedness property]
 A converse to Theorem~\ref{thm:MoserimpliesGUB} holds, that is, having a pointwise upper bound \eqref{eq:GUB} implies the local boundedness properties  \eqref{eq:localbound} and \eqref{eq:localbound*}. 
 \end{thm}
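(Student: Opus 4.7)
\medskip

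\textbf{Proof plan.} We concentrate on \eqref{eq:localbound}; the bound \eqref{eq:localbound*} follows by the same argument after reversing time. Fix $z_{0}=(t_{0},x_{0},v_{0})$ and $r>0$, and let $f$ be a local weak solution on a neighborhood of $Q_{2r}(z_{0})$. The goal is an essential-sup bound on $f(t_{0},\cdot)|_{B_{r}(x_{0},v_{0})}$ controlled by $r^{-(2d+1)}\|f\|_{\L^{2}(Q_{2r}(z_{0}))}$. The idea is to identify this with a dual $\L^{1}$--$\L^{2}$ estimate for the adjoint propagator $\widetilde\Gamma$ and then use a cutoff--and--Green's identity argument, exactly as in standard parabolic theory.

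The first step is a duality reduction: it is equivalent to prove that for every $\tpsi\in \cD(B_{r}(x_{0},v_{0}))$,
\[
 |\angle{f(t_{0})}{\tpsi}|\ \le\ \frac{C\,B}{r^{(4d+2)/2}}\,\|f\|_{\L^{2}(Q_{2r}(z_{0}))}\,\|\tpsi\|_{\L^{1}}.
\]
Let $\tilde f(s):=\widetilde\Gamma(s,t_{0})\tpsi$. Since $\widetilde\Gamma=\Gamma^{*}$ (Proposition~\ref{prop:propGreenop}), the hypothesis \eqref{eq:GUB} gives the same pointwise Gaussian upper bound on the kernel of $\widetilde\Gamma$. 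Hence, for $(y,w)$ at kinetic distance $\gtrsim r$ from $\{x_{0}\}\times\{v_{0}\}$,
\[
|\tilde f(s,y,w)|\ \lesssim\ \frac{\|\tpsi\|_{\L^{1}}}{(t_{0}-s)^{2d}}\,\exp\!\bigg(\!-c\Big[\tfrac{|x_{0}-y-(t_{0}-s)w|^{2}}{(t_{0}-s)^{3}}+\tfrac{|v_{0}-w|^{2}}{t_{0}-s}\Big]\bigg),
\]
and the exponential factor dominates the singular pre-factor on the annular region at scale $r$.

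The second step is the cutoff and Green's identity. Choose a smooth cutoff $\chi$ adapted to the Galilean structure, written in the shifted variables $(t,x-(t-t_{0})v_{0},v)$ as a tensor product, equal to $1$ on $Q_{3r/2}(z_{0})$, supported in $Q_{2r}(z_{0})$, and vanishing at $t=t_{0}-4r^{2}$; by construction $|(\partial_{t}+v\cdot\nabla_{x})\chi|\lesssim r^{-2}$ and $|\nabla_{v}\chi|\lesssim r^{-1}$. Pairing the forward weak equation for $f$ against $\chi\tilde f$ and the backward weak equation for $\tilde f$ against $\chi f$, then subtracting, the $\chi\,\mathbf{A}\nabla_{v}f\cdot\nabla_{v}\tilde f$ contributions cancel and we obtain
\begin{align*}
\angle{f(t_{0})}{\tpsi}\ =\ &\iiint_{\R\times\R^{2d}} [(\partial_{s}+w\cdot\nabla_{y})\chi]\, f\,\overline{\tilde f}\,\ds\dd y\dd w\\
&-\iiint_{\R\times\R^{2d}} \overline{\tilde f}\,\nabla_{w}\chi\cdot \mathbf{A}\nabla_{w}f\,\ds\dd y\dd w\\
&+\iiint_{\R\times\R^{2d}} f\,\nabla_{w}\chi\cdot \mathbf{A}^{*}\nabla_{w}\overline{\tilde f}\,\ds\dd y\dd w,
\end{align*}
where, because $\chi(t_{0},\cdot)=1$ on $\supp\tpsi$ and $\chi$ vanishes at the initial time, the integrated boundary term reduces to $\angle{f(t_{0})}{\tpsi}$. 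Each integrand is supported in the annulus $A:=Q_{2r}(z_{0})\setminus Q_{3r/2}(z_{0})$, where the derivatives of $\chi$ are concentrated.

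The third and final step is to estimate the three terms by Cauchy--Schwarz. For the $\nabla_{w}f$ contribution, apply the local energy inequality (same proof as Lemma~\ref{lem:Caccioppoli}(i), valid for any local weak solution) to obtain $\|\nabla_{w}f\|_{\L^{2}(A)}\lesssim r^{-1}\|f\|_{\L^{2}(Q_{2r}(z_{0}))}$. For $\tilde f$ and $\nabla_{w}\tilde f$ on $A$, use the pointwise Gaussian bound above together with a Caccioppoli-type inequality for the backward solution $\tilde f$ to produce $\L^{2}(A)$ bounds of the form $C\,r^{-(4d+2)/2+k}\|\tpsi\|_{\L^{1}}$ with the appropriate $k\in\{0,1\}$ matching the power of $r$ coming from $\nabla_{v}\chi$ or $(\partial_{t}+v\cdot\nabla_{x})\chi$. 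Putting these estimates together yields the desired dual inequality.

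The main obstacle is the bookkeeping of the Gaussian integrals on the annular region $A$: one must verify that the integral $\iiint_{A}|\widetilde\Gamma(s,t_{0},\cdot,\cdot)|^{2}\,\ds\dd y\dd w$ (and the analogous one with a gradient, via a Caccioppoli estimate on $\widetilde\Gamma$) comes out with the right power $r^{-4d-2}$, using critically that for $(y,w)\in A$ the kinetic distance from $(x_{0},v_{0})$ is $\sim r$, so the exponential $\exp(-c\,r^{2}/(t_{0}-s))$ absorbs the small-time singularity $(t_{0}-s)^{-2d}$ coming from \eqref{eq:GUB}. Everything else amounts to tracking the kinetic scaling $(r^{2},r^{3},r)$ of the cutoff.
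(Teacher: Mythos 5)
Your plan is correct and follows essentially the route the paper intends: the paper's proof is a one-line reference to the Hofmann--Kim argument (localization by a kinetic cutoff, a Duhamel/Green's identity justified by Theorem~\ref{thm:representation0T}, the Gaussian kernel bound on the annulus where the cutoff's derivatives live, and Caccioppoli estimates for the gradient terms), which is exactly what you carry out, merely phrased in dual form through the adjoint propagator $\widetilde\Gamma(s,t_{0})\tpsi$ rather than as a direct pointwise representation of $\chi f$. The one step to treat with care --- and the one the paper explicitly flags --- is the justification of your Green's identity up to the time slice $t=t_{0}$ (the boundary term $\angle{f(t_{0})}{\tpsi}$), which requires the absolute-continuity/representation machinery of Theorem~\ref{thm:representation0T} rather than a purely formal integration by parts, since $\chi\tilde f$ is not an admissible test function in the local weak formulation.
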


\begin{proof}
 It is a mere adaptation of the argument in \cite[Theorem 1.2]{MR2091016} using localization, size and local energy (Caccioppoli, see Lemma~\ref{lem:Caccioppoli}) estimates, while Duhamel's formula is justified by Theorem~\ref{thm:representation0T}.  We skip details. 
\end{proof}

\begin{thm}[Properties of the generalized fundamental solution]
\label{thm:propertiesFS}
 Under the assumptions of  Theorem~\ref{thm:MoserimpliesGUB}, let $ \Gamma(t,x,v,s,y,w)$ be the generalized fundamental solution of $ (\partial_t + v \cdot \nabla_x )  +\cA  $ and $\widetilde \Gamma (s,y,w,t,x,w)$ the one of $ -(\partial_t + v \cdot \nabla_x )  +\cA^* $. Then, 
  \begin{enumerate}
\item  For all  $s<r<t$ and almost every $(x_{1},v_{1},x_{2},v_{2})$,
\[\Gamma(t,x_{1},v_{1},s,x_{2}, v_{2})=\iint_{\R^{2d}} \Gamma(t,x_{1},v_{1},r,y,w)\Gamma(r,y,w,s,x_{2},v_{2})\, \dd y \dd w.
\]
\item For all $s<t$, one has for almost every $(x,v,y,w)$,
\[
\widetilde \Gamma (s,y,w,t,x,w)= \overline{\Gamma(t,x,v,s,y,w)}.
\]
\item For all $s<t$ and almost every $(x,v)$, 
\[\iint_{\R^{2d}} \Gamma(t,x,v,s,y,w)\, \dd y \dd w = 1= \iint_{\R^{2d}} \Gamma(t,y,w,s,x,v)\, \dd y \dd w.\]
\item If $\psi$ is continuous and bounded then for almost every $(x,v)$, 
\[
\lim_{t\to s^+}\iint_{\R^{2d}} \Gamma(t,x,v,s,y,w)\psi(y,w)\, \dd y \dd w = \psi(x,v).
\]

\end{enumerate}

 \end{thm}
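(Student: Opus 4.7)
The plan is to derive all four items from the operator-level identities of Proposition~\ref{prop:propGreenop} and Theorem~\ref{thm:conservationproperty}, using the integral representation \eqref{eq:integralrepresentation} and Fubini; the latter is legitimate throughout because the Gaussian bound \eqref{eq:GUB} is absolutely integrable in $(y,w)$ uniformly in $(t,s,x,v)$ (and symmetrically), with integral bounded by a fixed constant.

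For (i), I will apply the Chapman--Kolmogorov identity $\Gamma(t,s)=\Gamma(t,r)\Gamma(r,s)$ to an arbitrary $\psi\in\L^2_{x,v}$, expand both sides via \eqref{eq:integralrepresentation}, and swap the iterated kernel integrals in $\Gamma(t,r)\Gamma(r,s)\psi$ by Fubini. Comparing with $\Gamma(t,s)\psi$ yields, for each $\psi$, an almost everywhere identity in $(x,v)$ between two kernels tested against $\psi$. Picking a countable dense sequence in $\L^2_{x,v}$ and using that, for each fixed $(x,v)$, both kernels lie in $\L^2(\dd y\,\dd w)$ by the Gaussian bound promotes this to an a.e. identity on $\R^{4d}$. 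For (ii), I start from $\widetilde\Gamma(s,t)=\Gamma(t,s)^*$ and write $\angle{\Gamma(t,s)\psi}{\tpsi}=\angle{\psi}{\widetilde\Gamma(s,t)\tpsi}$ as double integrals involving the respective kernels (both enjoying a Gaussian bound by symmetric application of Theorem~\ref{thm:MoserimpliesGUB}); Fubini together with the arbitrariness of $\psi,\tpsi$ yields the claimed conjugate transpose identity.

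For (iii), I fix $\phi\in\L^2_{x,v}$ compactly supported and compute
\[
\iint_{\R^{2d}} \phi(x,v)\Bigl[\iint_{\R^{2d}} \Gamma(t,x,v,s,y,w)\,\dd y\,\dd w\Bigr]\dd x\,\dd v
\]
by swapping the order of integration. Using (ii), the inner integral in $(x,v)$ equals $\overline{(\widetilde\Gamma(s,t)\overline\phi)(y,w)}$, and the conservation of $\widetilde\Gamma$ (Theorem~\ref{thm:conservationproperty}) gives $\iint \widetilde\Gamma(s,t)\overline\phi\,\dd y\,\dd w=\iint \overline\phi\,\dd y\,\dd w$. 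Hence the bounded function $g(x,v):=\iint\Gamma(t,x,v,s,y,w)\,\dd y\,\dd w-1$ satisfies $\iint \phi g\,\dd x\,\dd v=0$ for every compactly supported $\phi$, forcing $g=0$ a.e.\ (take $\phi=\overline g\,\mathbf{1}_K$ on compacts $K$). The second equality follows in the same way, with the inner integral after the swap being $(\Gamma(t,s)\phi)(y,w)$ itself, so that the conservation of $\Gamma$ closes the argument without appeal to (ii).

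For (iv), item (iii) allows me to subtract $\psi(x,v)$ freely, writing
\[
(\Gamma(t,s)\psi)(x,v)-\psi(x,v)=\iint_{\R^{2d}}\Gamma(t,x,v,s,y,w)\bigl[\psi(y,w)-\psi(x,v)\bigr]\,\dd y\,\dd w.
\]
I then perform the kinetic rescaling $w=v+\sqrt{t-s}\,u$, $y=x-(t-s)v+(t-s)^{3/2}(z-u)$, whose Jacobian $(t-s)^{2d}$ exactly cancels the $(t-s)^{-2d}$ prefactor in \eqref{eq:GUB}, leaving the integrable envelope $C\e^{-c(|u|^2+|z|^2)}$ independent of $t-s$. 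In these variables $(y,w)\to(x,v)$ pointwise as $t\to s^+$, so continuity and boundedness of $\psi$ justify dominated convergence. The main delicacy is bookkeeping: (iii) requires careful handling of complex conjugates when moving between $\Gamma$ and $\widetilde\Gamma$, while in (iv) the substance lies in recognising that the kinetic dilation turns the Gaussian bound into a standard Gaussian in $(u,z)$ uniformly in $t-s$; once set up, Fubini and dominated convergence finish each item routinely.
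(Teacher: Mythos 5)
Your proposal is correct and follows essentially the same route as the paper: (i)--(iii) are derived exactly as there from the Chapman--Kolmogorov, adjoint and conservation identities at operator level together with the Gaussian bound and Fubini, and your density/duality bookkeeping is the standard way to upgrade the tested identities to a.e.\ kernel identities. The only variation is in (iv), where you use the kinetic change of variables and dominated convergence instead of the paper's near/far splitting with a modulus-of-continuity bound; these are two equivalent implementations of the same approximate-identity argument, both resting on item (iii) and the scaling built into \eqref{eq:GUB}.
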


\begin{proof}
 
 Item (i) is a consequence of the Chapman-Kolmogorov relation in Proposition~\ref{prop:propGreenop}, \eqref{eq:integralrepresentation} and Fubini's theorem using integrability from \eqref{eq:GUB}. 

Item (ii) follows from the adjoint relation in Proposition~\ref{prop:propGreenop} and   \eqref{eq:GUB}.

Item (iii) follows from Theorem~\ref{thm:conservationproperty}, \eqref{eq:integralrepresentation},  \eqref{eq:GUB} and item (ii). 

For item (iv), using points where the upper estimate holds and item (iii), it follows from a bound of the type 
\begin{align*}
\bigg| \iint_{\R^{2d}} &\Gamma(t,x,v,s,y,w)\psi(y,w)\, \dd y \dd w - \psi(x,v)\bigg|
\\
&\le C \sup_{B_{r}(x,v)}|\psi(y,w)-\psi(x,v)| +  h(r/(t-s)^{1/2}, v/(t-s)^{1/2}) \|\psi\|_{\infty},
\end{align*}
where $B_{r}(x,v)$ are the balls introduced before Definition~\ref{def:localboundedness}, 
 $h:(0,\infty)\times \R^d\to (0,\infty)$ is a function that  tends to $0$ at $\infty$ and $$C=\sup_{t>s}\esssup_{x,v}\iint_{\R^{2d}} |\Gamma(t,x,v,s,y,w)|\, \dd y \dd w.$$ Details are easy and left to the reader.
\end{proof}

\begin{thm}[Generalized fundamental solution for real coefficients]
\label{thm:conclusion}
If $\mathbf{A}$ in \eqref{e:ellip-lower} has real, measurable,  coefficients, then the operator $ (\partial_t + v \cdot \nabla_x )  +\cA  $ has a generalized fundamental solution  $\Gamma(t,x,v,s,y,w), t>s, (x,v,y,w) \in \R^{4d}$ satisfying \eqref{eq:GUB},  in the sense that  for any $\psi\in \L^2_{x,v}$, the   weak solution $f$ in the class specified in Section~\ref{sec:review}  to the kinetic Cauchy problem $(\partial_t + v \cdot \nabla_x ) f +\cA f=0$ with initial data $\psi$ at time $s$ is represented for all $t> s$ and almost every $(x,v)\in \R^{2d}$ by 
\begin{equation}
\label{eq:integralrepresentationf}
f(t,x,v)=\iint_{\R^{2d}} \Gamma(t,x,v,s,y,w)\psi(y,w)\, \dd y \dd w. 
\end{equation} 
The function $\Gamma$ is the unique  locally integrable function for which \eqref{eq:integralrepresentationf} holds for all $\psi\in \cD(\R^{2d})$ and all $t>s$ and almost every $(x,v)$. Moreover,  it satisfies the properties of Theorem~\ref{thm:propertiesFS}, and for all $S\in \cD(\R\times\R^d\times\R^d)$, 
\begin{equation}
\label{eq:integralrepresentationS}
 (t,x,v)\mapsto \int_{-\infty}^t\iint_{\R^{2d}} \Gamma(t,x,v,s,y,w)S(s,y,w)\, \dd y \dd w\, \ds
\end{equation} agrees with the  weak solution $\cK_{\cA}\, S$ to $(\partial_t + v \cdot \nabla_x ) f +\cA f=S$ on $\R\times\R^d\times\R^d$.
\end{thm}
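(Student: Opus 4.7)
The plan is to invoke the general theorems already established in this paper once the local boundedness hypothesis is verified for real coefficients. When $\mathbf{A}$ is real, measurable and elliptic, so is its pointwise transpose, and the scale-invariant $\L^2$–$\L^\infty$ Moser estimates of \cite[Lemma 11]{MR3923847} apply both to local weak solutions of $(\partial_t + v \cdot \nabla_x) + \cA$ and of the backward adjoint $-(\partial_t + v \cdot \nabla_x) + \cA^*$. By the remark following Definition~\ref{def:localboundedness}, these estimates are equivalent to the local boundedness properties \eqref{eq:localbound} and \eqref{eq:localbound*}. Consequently Theorem~\ref{thm:MoserimpliesGUB} provides, for every $s<t$, a measurable kernel $\Gamma(t,x,v,s,y,w)$ for the fundamental solution operator $\Gamma(t,s)$ obeying the Gaussian upper bound \eqref{eq:GUB}, and the representation \eqref{eq:integralrepresentationf} then extends from $\L^1\cap \L^2$ data (as produced in the proof of Theorem~\ref{thm:MoserimpliesGUB}) to every $\psi \in \L^2_{x,v}$, using that \eqref{eq:GUB} is square integrable in $(y,w)$ uniformly on compact sets of $(t,x,v,s)$. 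The further properties listed in Theorem~\ref{thm:propertiesFS} then hold automatically.

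For uniqueness, suppose $\Gamma'$ is another locally integrable function for which \eqref{eq:integralrepresentationf} is valid for every $\psi \in \cD(\R^{2d})$, every $t>s$ and almost every $(x,v)$. For fixed $t>s$ the difference $\Gamma - \Gamma'$ pairs to zero against every $\psi \in \cD(\R^{2d})$ for a.e.\ $(x,v)$. Taking a countable dense subfamily and applying Fubini, which is legitimate thanks to the local integrability assumption, forces $\Gamma = \Gamma'$ almost everywhere on $\R^{4d}$ for each such $t>s$.

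Finally, for $S \in \cD(\R\times\R^d\times\R^d)$, Proposition~\ref{thm:representationRhom} yields $(\cK_{\cA} S)(t) = \int_{-\infty}^t \Gamma(t,s) S(s)\, \ds$ with strong $\L^2_{x,v}$ convergence, since $S$ is compactly supported in time and each $\Gamma(t,s)$ is contractive on $\L^2_{x,v}$. Substituting the kernel representation \eqref{eq:integralrepresentation} and exchanging the order of integration via Fubini—justified by \eqref{eq:GUB} combined with the compact support of $S$—delivers \eqref{eq:integralrepresentationS}. The only non-routine input throughout is the Moser estimate of \cite[Lemma 11]{MR3923847}; everything else is a bookkeeping exercise combining earlier results. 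The step most in need of care is the initial verification of local boundedness, because the Moser estimates of \cite{MR3923847} are originally phrased for a slightly narrower functional-analytic class of solutions than our Section~\ref{sec:weaksol}; the bridge is provided by the $\L^2_{\loc}$-continuity of local weak solutions (Lemma~\ref{lem:continuity}) as noted in the remark following Definition~\ref{def:localboundedness}.
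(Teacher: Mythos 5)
Your proposal is correct and follows essentially the same route as the paper: verify the local boundedness hypotheses via the Moser estimates of \cite{MR3923847}, invoke Theorem~\ref{thm:MoserimpliesGUB} and Theorem~\ref{thm:propertiesFS}, deduce uniqueness of the kernel from the representation $f(t)=\Gamma(t,s)\psi$, and obtain \eqref{eq:integralrepresentationS} from Proposition~\ref{thm:representationRhom}. The only detail the paper flags that you pass over silently is that \cite{MR3923847} formally assumes $\mathbf{A}$ symmetric, so one must note (as the paper does) that inspection of that proof shows symmetry is not needed; your observation that the transpose matrix is again real, measurable and elliptic handles the adjoint equation but not this point.
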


\begin{proof}
Applying the Moser estimates from \cite{MR3923847} for weak solutions gives us the conclusions of  Theorem~\ref{thm:MoserimpliesGUB} and Theorem~\ref{thm:propertiesFS}. Strictly speaking, \cite{MR3923847} also assumes that the matrix $\mathbf{A}$ is symmetric but examination of the proof shows it is not
necessary.  Uniqueness with \eqref{eq:integralrepresentationf} follows from $f(t)=\Gamma(t,s)\psi$ proved in Theorem~\ref{thm:representation0T}
 (putting the initial time at $s$). Thus  $\Gamma(t,x,v,s,y,w)$ must be the Schwartz kernel of $\Gamma(t,s)$. As for \eqref{eq:integralrepresentationS}, this also follows from Theorem~\ref{thm:representation0T} or its version on $\R$  in Proposition~\ref{thm:representationRhom}.
  
\end{proof}
\begin{rem}
We could have added lower-order terms with bounded coefficients to $\cA$. The methods of proofs for the $\L^2$ decay and the pointwise decay adapt. Moser's estimates need only to be considered on small scales ($r<r_{0}$) and the estimates come with an extra factor $\e^{\omega(t-s)}$. Except for the conservation property (iii) in Theorem \ref{thm:propertiesFS} which could be lost from having lower order coefficients, the conclusions of Theorem \ref{thm:conclusion} hold with a modification to \eqref{eq:integralrepresentationS}: either we assume $S$ has support in $(0,T)$ and this is the weak solution to the Cauchy problem on $(0,T)$ with zero initial data, or we multiply $\Gamma $ by $e^{-c(t-s)}$, $c> c_{0}$,  in the integrand and this is $\cK_{\cA +c}\, S$.
\end{rem}
\begin{rem}
	The methods of Section \ref{sec:localcase} apply to systems of equations, too, under ellipticity in the form of \eqref{e:ellip-lower}. The
	generalized fundamental solution is then matrix-valued and complex conjugation in property (ii) of Theorem~\ref{thm:propertiesFS} should be replaced by matrix adjunction.    
\end{rem}

\bibliographystyle{plain}

\end{document}